\newtheorem{theorem}{Theorem}[section]
\newtheorem{lemma}[theorem]{Lemma}
\newtheorem{proposition}[theorem]{Proposition}
\newtheorem{corollary}[theorem]{Corollary}
\newtheorem{conjecture}[theorem]{Conjecture}
\theoremstyle{definition}
\newtheorem{definition}[theorem]{Definition}
\newtheorem{example}[theorem]{Example}
\theoremstyle{remark}
\newtheorem{remark}[theorem]{Remark}
\numberwithin{equation}{section}
\DeclareMathOperator{\Aut}{Aut}
\DeclareMathOperator{\GL}{GL}
\DeclareMathOperator{\id}{id}
\DeclareMathOperator{\Inj}{Inj}
\DeclareMathOperator{\opTriv}{opTriv}
\DeclareMathOperator{\Ret}{Ret}
\DeclareMathOperator{\Soc}{Soc}
\DeclareMathOperator{\Perm}{Perm}
\DeclareMathOperator{\Triv}{Triv}
\newcommand{\N}{\mathbb{N}}
\newcommand{\Z}{\mathbb{Z}}
\newcommand{\G}{\mathcal{G}}
\newcommand{\hsigma}{\hat{\sigma}}
\newcommand{\op}{\mathrm{op}}
\newcommand{\sub}{\subseteq}
\newcommand{\Sym}{\mathbb{S}}
\newcommand{\lexp}[2]{\null^{#2} \mkern-2mu #1}
\newcommand{\lexpp}[2]{\null^{#2} \mkern-2mu (#1)}
\begin{document}

\title[On bi-skew braces and brace blocks]{On bi-skew braces and brace blocks}

\author{L.~Stefanello}

\address{Department of Mathematics,
          Universit\`a di Pisa,
          Largo Bruno Pontecorvo 5, 56127 Pisa, Italy}
        
\email{lorenzo.stefanello@phd.unipi.it}

\urladdr{https://people.dm.unipi.it/stefanello/}
\thanks{The first author was a member of GNSAGA (INdAM)}

\author{S.~Trappeniers}

\address{Department of Mathematics and Data Science, 
 Vrije Universiteit Brussel, 
 Pleinlaan 2, 
 1050 Brussels, Belgium} 

\email{senne.trappeniers@vub.be} 
\thanks{The second author was supported by Fonds voor Wetenschappelijk Onderzoek (Flanders), via an FWO Aspirant-fellowship, grant 1160522N}

\subjclass[2020] {Primary 16T25, 20N99, 81R50}

\keywords{Skew braces, bi-skew braces, $\gamma$-homomorphic skew braces, brace blocks, Yang--Baxter equation}

\begin{abstract}
L.~N.~Childs defined a bi-skew brace to be a skew brace such that if we swap the role of the two operations, then we find again a skew brace.

In this paper, we give a systematic analysis of bi-skew braces. We study nilpotency and solubility, and connections between bi-skew braces and set-theoretic solutions of the Yang--Baxter equation. Further, we deal with Byott's conjecture in the case of bi-skew braces, and we use bi-skew braces as a tool to solve a classification problem proposed by L.~Vendramin.

In the final part, we investigate brace blocks, defined by A.~Koch to be families of group operations on a given set such that any two of them yield a bi-skew brace. We provide a characterisation of brace blocks, illustrate how all known constructions in literature follow in a natural way from our characterisation, and give several new examples.
\end{abstract}

\maketitle

\section{Introduction}

After the definition of skew braces in~\cite{GV17}, built on the pioneering work of~\cite{Rum07a}, a considerable part of literature has been devoted to the study of the main properties of these objects and their relations with other topics, such as Jacobson radical rings, regular subgroups of the holomorph, the Yang--Baxter equation, and Hopf--Galois structures~\cite{Bac18,Ced18,SV18,BCJO19}. In particular, these relations motivated even more the study of the involved topics, and allowed various problems to be translated in different settings. For example, one deep problem regarding the properties of the underlying groups of a skew brace was initially formulated by N.~P.~Byott as a statement on regular subgroups of the holomorph of a soluble group~\cite{Byo15}, and it is now known as Byott's conjecture.

 In~\cite{Chi19}, L.~N.~Childs defined a bi-skew brace to be a skew brace $(A,\cdot,\circ)$ such that also $(A,\circ,\cdot)$ is a skew brace. His main focus was the relation with Hopf--Galois theory; a bi-skew brace $(A,\cdot,\circ)$ with $A$ finite provides not only a Hopf--Galois structure of type $(A,\cdot)$ on every Galois extension with Galois group $(A,\circ)$, but also a Hopf--Galois structure  of type $(A,\circ)$ on every Galois extension with Galois group $(A,\cdot)$; see~\cite{Chi00} for a general treatment of Hopf--Galois theory, and the appendix of~\cite{SV18},~\cite[Chapter 2]{CGKKKTU21}, and~\cite{ST22b-p} for the relation between skew braces and Hopf--Galois structures.

A.~Caranti studied different characterisations of bi-skew braces and gave various constructions in~\cite{Car20}. These characterisations were formulated both using gamma functions and from the point of view of regular subgroups of the holomorph.
Bi-skew braces were further studied by A.~Koch in~\cite{Koc21}, where a construction for bi-skew braces is given starting from group endomorphisms with abelian image. An iterative version of this construction was then obtained in~\cite{Koc22}, where also the notion of a brace block is given. This is a family of group operations on a fixed set such that any two operations form a skew brace. In particular, every bi-skew brace yields a brace block with two operations. The constructions by Koch and a construction of Caranti~\cite{Car18} were subsequently generalised by Caranti and the first author in~\cite{CS21,CS22a}. In the recent manuscript~\cite{BNY22-p}, an iterative construction is given to obtain a brace block from a given bi-skew brace satisfying a certain property. A common factor in these works is the study of the questions, "How can we better understand bi-skew braces or brace blocks? What are new, effective ways to construct them?" The same idea lives on throughout this paper. In particular, a better understanding of bi-skew braces and brace blocks leads to new ways of constructing them and explains known constructions and their conditions in a natural way.

The paper is organised as follows. 

In section~\ref{sec: prelim} we recall the preliminaries that are used throughout the paper.

In section~\ref{sec: structural result} we state several structural results of bi-skew braces. We relate structural properties of a bi-skew brace to those of its associated skew brace with swapped operations and also to properties of a suitable group associated with the skew brace. Moreover, we give an affirmative answer to Byott's conjecture in the case of bi-skew braces, employing an approach that also works for several other classes of skew braces. At last, we state a characterisation of a recently defined class of skew braces~\cite{BNY22}, called $\gamma$-homomorphic skew braces, which bears a strong resemblance to a characterisation of bi-skew braces by Caranti. This resemblance is further emphasised when we discuss two slightly different constructions. One yields a new construction of $\gamma$-homomorphic skew braces and the other is a new way to obtain examples of bi-skew braces described by Childs.

Section~\ref{sec: classification} contains two classification results. We first prove an upper bound on the right nilpotency class of braces with multiplicative group isomorphic to $\Z^n$. In particular, we recover the known result that such a brace is trivial if $n=1$ and the new result that it is a bi-skew brace if $n=2$. Secondly, we use bi-skew braces to prove an open problem posed by L.~Vendramin concerning the classification of skew braces with a multiplicative group isomorphic to $\Z$.

Section~\ref{sec: bi and YBE} starts with a short summary concerning the connection between skew braces and set-theoretic solutions of the Yang--Baxter equation. We then give new results which show that the property of being a bi-skew brace can still be recognised when we look at the associated set-theoretic solution of the Yang--Baxter equation. On the other hand, we show with a counterexample that the associated solutions to both skew brace structures of a bi-skew brace can not be related in a direct way. We also study when the structure group of a solution is a bi-skew brace.

In section~\ref{sec: brace blocks} we investigate brace blocks. We start with a general characterisation of brace blocks on a given group. It is only when we add an extra condition that we obtain a more manageable characterisation from which we can construct new brace blocks. Nonetheless, we illustrate that this more restrictive characterisation still covers all known constructions of brace blocks in literature. This also allows to work with abelian groups, a case where most known constructions only yield trivial examples. We further give two new concrete constructions of brace blocks using rings and semidirect products.

\section{Preliminaries}\label{sec: prelim}

We begin with the definition of a skew left brace.
\begin{definition}
	A \emph{skew left brace} $A=(A,\cdot,\circ)$ is a set $A$ together with group structures $(A,\cdot)$ and $(A,\circ)$ such that for all $a,b,c\in A$,
	\begin{equation*}
	    a\circ (b\cdot c)=(a\circ b)\cdot a^{-1}\cdot (a\circ c).
	\end{equation*}
    Here $a^{-1}$ denotes the inverse of $a$ in $(A,\cdot)$. 
\end{definition}
\begin{remark}
    From this definition it is already clear that in this work sets might be endowed with more than one group structure. When there is possible confusion, we specify the group structure when well-known notations concerning groups are used. For example, the centre of $(A,\cdot)$ is denoted by $Z(A,\cdot)$, and the automorphism group of $(A,\circ)$ is denoted by $\Aut(A,\circ)$.
\end{remark}
In a natural way, also \emph{right} and \emph{two-sided skew braces} can be defined. However, for simplicity, we talk about \emph{skew braces} when actually meaning skew left braces. A skew brace $(A,\cdot,\circ)$ is called a \emph{brace} if $(A,\cdot)$ is an abelian group. Given a skew brace $(A,\cdot,\circ)$, we call $(A,\cdot)$ the \emph{additive group}; this somewhat ambiguous notation follows from the fact that for braces this group was originally denoted by $(A,+)$ and this notation is still common for skew braces. The group $(A,\circ)$ is called the \emph{multiplicative group}. By $\overline{a}$ we denote the inverse of an element $a\in A$ with respect to the multiplicative group. For $n\in \Z$ and $a\in A$,  we use $a^n$ for the $n$-th power of $a$ in $(A,\cdot)$ and $a^{\circ n}$ for the $n$-th power in $(A,\circ)$. It is easily proved that the neutral elements of the additive and multiplicative group coincide; this common neutral element is denoted by $1$. Given two skew braces $A$ and $B$, a map $f\colon A\to B$ is a \emph{homomorphism of skew braces} if both $f(a\cdot b)=f(a)\cdot f(b)$ and $f(a\circ b)=f(a)\circ f(b)$ hold for all $a,b\in A$. \emph{Isomorphisms} and \emph{automorphisms} are defined accordingly, and the group of automorphisms of a skew brace $(A,\cdot,\circ)$ is denoted by $\Aut(A,\cdot,\circ)$.

One way to construct skew braces is to start from any group $(G,\circ)$ and define $a\cdot b=a\circ b$ for all $a,b\in G$. This is called the \emph{trivial skew brace} on $(G,\circ)$, for which we use the notation $\Triv(G,\circ)$. If we instead define $a\cdot b=b\circ a$, then we obtain the \emph{almost trivial skew brace} on $(G,\circ)$, denoted by $\opTriv(G,\circ)$. When the group operation on $G$ is clear, also $\Triv(G)$ and $\opTriv(G)$ are used.

With each element $a$ of a skew brace $(A,\cdot,\circ)$ we associate the bijective map 
\begin{equation*}
    \gamma(a)\colon A\to A, \quad b\mapsto \lexp{b}{\gamma(a)}=a^{-1}\cdot (a\circ b),
\end{equation*}
which we write as a left exponent. Note that this map is also denoted by $\lambda_a$ in literature. 
By \cite[Proposition 1.9]{GV17}, this gives a well-defined group homomorphism $\gamma\colon (A,\circ)\to \Aut(A,\cdot)$.
Note that $(A,\circ)$ is fully determined by $(A,\cdot)$ and the function $\gamma$, as 
\begin{equation}\label{eq: circ wrt gamma}
    a\circ b=a\cdot \lexp{b}{\gamma(a)}.
\end{equation}
It follows that any group $(A,\cdot)$ and map $\gamma:A\to \Aut(A,\cdot)$ such that for all $a,b\in A$, 
\begin{equation}\label{eq: gamma function}
    \gamma(a\cdot \lexp{b}{\gamma(a)})=\gamma(a)\gamma(b),
\end{equation} 
determine a skew brace $(A,\cdot,\circ)$, where $(A,\circ)$ is given by \eqref{eq: circ wrt gamma}. Any map $\gamma$ satisfying \eqref{eq: gamma function} is called a \emph{gamma function} on $(A,\cdot)$.
\begin{definition}
Let $A$ be a skew brace.
\begin{enumerate}
    \item A \emph{left ideal} of $A$ is a subgroup $I$ of $(A,\cdot)$ such that $\lexp{I}{\gamma(a)}\subseteq I$ for all $a\in A$.
    \item An \emph{ideal} of $A$ is a left ideal $I$ which is moreover normal in $(A,\cdot)$ and $(A,\circ)$.
\end{enumerate}
\end{definition}
From \eqref{eq: circ wrt gamma} we immediately see that (left) ideals of a skew brace $A$ are also subgroups of $(A,\circ)$, so in particular they are subskew braces of $A$. Ideals are precisely the appropriate substructures needed to define the quotient of a skew brace by this substructure. Indeed, given an ideal $I$ of $A$, we have $a\cdot I=I\cdot a=a\circ I=I\circ a$ for all $a\in A$. We can thus construct the \emph{quotient skew brace} $A/I$ in the natural way. An example of an ideal is the \emph{socle} of a skew brace $A$, defined as $\Soc(A)=\ker (\gamma)\cap Z(A,\cdot)$. Another example is given by the kernel of any skew brace homomorphism. 

For $a,b\in A$, we define $a*b=\lexp{b}{\gamma(a)}\cdot b^{-1}=a^{-1}\cdot (a\circ b)\cdot b^{-1}$. One way to think about this new operation is as a commutator which measures how close $\lexp{b}{\gamma(a)}$ is to $b$. Equivalently, $a*b$ can be seen as a measure in difference between $a\cdot b$ and $a\circ b$. Indeed, it is clear that these coincide if and only if $a*b=1$. Yet another way to see this operation becomes apparent when we look at the correspondence between two-sided braces and Jacobson radical rings, where $*$ yields precisely the ring operation of the associated Jacobson radical ring \cite{Rum07a}. For subsets $X,Y\subseteq A$ we define $X*Y$ as the subgroup of $(A,\cdot)$ generated by 
\begin{equation*}
    \{x*y\mid x\in X \text{ and } y\in Y\}.
\end{equation*}
Taking $X=Y=A$ we obtain $A^2=A*A$, which is an ideal of $A$ with the property that $A/A^2$ is a trivial skew brace. 

We can associate with every skew brace $(A,\cdot, \circ)$ its \emph{opposite skew brace} $A_\op=(A,\cdot_\op,\circ)$ where $a\cdot_\op b=b\cdot a$. This construction was first considered by A.~Koch and P.~J.~Truman \cite{KT20a}. It is now clear where the notation of the almost trivial skew brace on a given group $(A,\circ)$ comes from, as $\opTriv(A,\circ)=\Triv(A,\circ)_\op$. In general, we use ``$\op$'' to denote that we consider a known construction in the opposite skew brace. For example, $\gamma_\op$, respectively $*_\op$, is the gamma function, respectively $*$-operation, associated with $A_\op$. Concretely, for all $a,b\in A$,
\begin{gather*}
    \lexp{b}{\gamma_\op(a)}=(a\circ b)\cdot a^{-1},\\
    a*_\op b=\lexp{b}{\gamma_\op(a)} \cdot_\op b^{-1}=b^{-1}\cdot (a\circ b)\cdot a^{-1}.
\end{gather*}
Note that $A=A_\op$ if and only if $A$ is a brace. It is still possible that $A\cong A_\op$ when $A\neq A_\op$, although this is generally not the case. The internal structures of $A$ and $A_\op$ are strongly related however, as the following result, whose proof is immediate, shows.

\begin{proposition}\label{prop: ideals of op}
    Let $A$ be a skew brace. Then the ideals of $A$ and $A_\op$ coincide.
\end{proposition}
As a concrete application of Proposition~\ref{prop: ideals of op} we observe that $A_\op^2=A*_\op A$ is an ideal of $A$.

\begin{definition}
	A skew brace $(A,\cdot,\circ)$ is a \emph{bi-skew brace} if also $(A,\circ,\cdot)$ is a skew brace.
\end{definition}
Given a bi-skew brace $A=(A,\cdot,\circ)$ we use the notation $A_{\leftrightarrow}=(A,\circ,\cdot)$. The gamma function $\gamma_\leftrightarrow$ associated with $A_{\leftrightarrow}$ is given by $\gamma_\leftrightarrow(a)=\gamma(a)^{-1}$ and its $*$-operation is denoted by $*_\leftrightarrow$. The following characterisation of bi-skew braces using the gamma function is a slight reformulation of~\cite[Theorem 3.1]{Car20}.
\begin{theorem}\label{theorem: characterizations bi caranti}
    Let $A$ be a skew brace. Then the following are equivalent:
    \begin{enumerate}
        \item $A$ is a bi-skew brace.
        \item $\gamma:(A,\cdot)\to \Aut(A,\cdot)$ is a group antihomomorphism.
        \item $A^2_\op$ is contained in $\ker (\gamma)$.
    \end{enumerate}
\end{theorem}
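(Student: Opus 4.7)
The plan is to prove the three equivalences in turn, using the characterisation of skew braces via gamma functions throughout.

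For $(1)\Leftrightarrow (2)$, the idea is that the candidate gamma function of $(A,\circ,\cdot)$, obtained by solving $a\circ b = a\cdot \lexp{b}{\gamma(a)}$ for $\cdot$ in terms of $\circ$, is forced to be the map $a\mapsto \gamma(a)^{-1}$. Indeed, writing $a\cdot b = a\circ \lexp{b}{\gamma(a)^{-1}}$ is immediate from the defining formula, so any skew brace structure $(A,\circ,\cdot)$ must have this as its gamma function. The condition that this map is multiplicative as a map $(A,\cdot)\to \Aut(A,\circ)$ reads $\gamma(a\cdot b)^{-1} = \gamma(a)^{-1}\gamma(b)^{-1}$, which is precisely (2). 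The remaining requirement, that each $\gamma(a)^{-1}$ lies in $\Aut(A,\circ)$, is automatic once (2) is assumed: a short computation using $\gamma(a)(b) = a^{-1}\cdot (a\circ b)$ together with (2) yields the conjugation identity $\gamma(\gamma(a)(b)) = \gamma(a)\gamma(b)\gamma(a)^{-1}$, which is exactly what is needed to verify $\gamma(a)(x\circ y) = \gamma(a)(x)\circ \gamma(a)(y)$.

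For $(2)\Rightarrow (3)$, I would compute $\gamma(a*_\op b)$ directly. Expanding $a*_\op b = b^{-1}\cdot (a\circ b)\cdot a^{-1}$ and applying (2) twice, together with $\gamma(a\circ b)=\gamma(a)\gamma(b)$ and the consequence $\gamma(x^{-1})=\gamma(x)^{-1}$ of (2), the computation collapses to $\gamma(a)^{-1}\gamma(a)\gamma(b)\gamma(b)^{-1}=\id$. Since (2) also implies that $\ker(\gamma)$ is closed under $\cdot$ and $\cdot$-inverses, it contains the subgroup $A^2_\op$ generated by all the elements $a*_\op b$.

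For $(3)\Rightarrow (2)$, the key observation is that $A/A^2_\op$ is an almost trivial skew brace, since killing $A^2_\op$ forces $a*_\op b = 1$, equivalent to $a\circ b = b\cdot a$. In such a quotient the element $(a\cdot b)\circ \bar a \circ \bar b$ reduces to $\bar b\cdot \bar a\cdot a\cdot b = 1$, so lifting back yields $(a\cdot b)\circ \bar a\circ \bar b \in A^2_\op \subseteq \ker(\gamma)$. Applying the homomorphism $\gamma\colon (A,\circ)\to \Aut(A,\cdot)$ then gives $\gamma(a\cdot b)\gamma(a)^{-1}\gamma(b)^{-1}=\id$, which is (2). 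The main subtlety in the whole argument is bookkeeping between the two group operations: the role of (2) is to make $\gamma$, a priori only multiplicative on $(A,\circ)$, also multiplicative on $(A,\cdot)$, up to reversal of order, and the cleanest way to see the $(3)\Rightarrow (2)$ direction is via the almost trivial quotient rather than by trying to manipulate $a*_\op b$ directly into $(a\cdot b)\circ \bar a\circ \bar b$.
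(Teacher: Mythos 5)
Your proof is correct. Note that the paper itself does not prove this theorem: it is stated in the preliminaries as a reformulation of Caranti's result and cited to \cite[Theorem 3.1]{Car20}, so your argument is a self-contained derivation of a quoted fact rather than a rival to an in-paper proof. The structure you use is the natural one and matches Caranti's in spirit for $(1)\Leftrightarrow(2)$: solving $a\cdot b=a\circ\lexp{b}{\gamma(a)^{-1}}$ forces the candidate gamma function of $(A,\circ,\cdot)$ to be $a\mapsto\gamma(a)^{-1}$, the antihomomorphism condition is exactly its multiplicativity on $(A,\cdot)$, and the conjugation identity $\gamma(\lexp{b}{\gamma(a)})=\gamma(a)\gamma(b)\gamma(a)^{-1}$ (which indeed follows from $\gamma(a\circ b)=\gamma(a)\gamma(b)$ together with (2)) is precisely what makes each $\gamma(a)^{-1}$ an automorphism of $(A,\circ)$. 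Your $(2)\Rightarrow(3)$ computation is fine, including the needed observations that $\gamma(x^{-1})=\gamma(x)^{-1}$ under (2) and that $\ker(\gamma)$ is closed under $\cdot$ so that it contains the whole subgroup generated by the elements $a*_\op b$. For $(3)\Rightarrow(2)$, your route through the almost trivial quotient $A/A^2_\op$ is legitimate (the preliminaries already record that $A^2_\op$ is an ideal of $A$, independently of this theorem, so there is no circularity), and applying the $(A,\circ)$-homomorphism $\gamma$ to $(a\cdot b)\circ\overline{a}\circ\overline{b}\in A^2_\op\subseteq\ker(\gamma)$ gives exactly $\gamma(a\cdot b)=\gamma(b)\gamma(a)$. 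No gaps.
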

\begin{remark}\label{rem: extra condition bi}
    We remark that another equivalent condition to the ones in Theorem~\ref{theorem: characterizations bi caranti} is that $A^2_\op*A=\{1\}$. Indeed, this follows from the fact that $a*b=1$ if and only if $\lexp{b}{\gamma(a)}=b$. Also note that in particular $A^2_\op$ is a trivial skew brace in this case.
\end{remark}
We recall the following notion, introduced as $\lambda$-homomorphic skew braces by V.~G. Bardakov, M.~V.~Neshchadim, and M.~K.~Yadav~\cite{BNY22}.
\begin{definition}
	A skew brace $A$ is \emph{$\gamma$-homomorphic} if $\gamma\colon (A,\cdot)\to \Aut(A,\cdot)$ is a group homomorphism.
\end{definition}

From Theorem~\ref{theorem: characterizations bi caranti} we recover the following result; see~\cite[Lemma 3.7]{Car20}.
\begin{lemma}\label{lemma: two implies three}
	Let $A$ be a skew brace. Then any two of the following statements imply the third:
\begin{itemize}
    \item $A$ is $\gamma$-homomorphic.
    \item $A$ is a bi-skew brace. 
    \item $\gamma(A)$ is abelian. 
\end{itemize}
\end{lemma}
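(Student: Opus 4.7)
The plan is to reduce everything to three symbolic equalities and compare them. By definition, $A$ being $\gamma$-homomorphic is the statement
\[
(\text{H})\quad \gamma(a\cdot b)=\gamma(a)\gamma(b)\quad\text{for all }a,b\in A,
\]
while by Theorem~\ref{theorem: characterizations bi caranti}, $A$ being a bi-skew brace is equivalent to
\[
(\text{B})\quad \gamma(a\cdot b)=\gamma(b)\gamma(a)\quad\text{for all }a,b\in A,
\]
and $\im(\gamma)$ being abelian unfolds to
\[
(\text{C})\quad \gamma(a)\gamma(b)=\gamma(b)\gamma(a)\quad\text{for all }a,b\in A.
\]
The key observation is simply that (H), (B) and (C) are three equations relating the same three expressions $\gamma(a\cdot b)$, $\gamma(a)\gamma(b)$ and $\gamma(b)\gamma(a)$, pairwise.

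The proof then proceeds by handling the three cases in turn. If (H) and (B) both hold, then $\gamma(a)\gamma(b)=\gamma(a\cdot b)=\gamma(b)\gamma(a)$, giving (C). If (H) and (C) both hold, then $\gamma(a\cdot b)=\gamma(a)\gamma(b)=\gamma(b)\gamma(a)$, giving (B). If (B) and (C) both hold, then $\gamma(a\cdot b)=\gamma(b)\gamma(a)=\gamma(a)\gamma(b)$, giving (H).

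I do not anticipate any real obstacle here: the entire content is the reformulation (B) of the bi-skew brace condition granted by Theorem~\ref{theorem: characterizations bi caranti}, after which the equivalence of any two out of the three conditions with the third is immediate from transitivity of equality. The only thing worth spelling out clearly is the quantification --- each of (H), (B), (C) must hold for \emph{all} pairs $a,b\in A$ --- so one should state the three cases uniformly rather than arguing pointwise.
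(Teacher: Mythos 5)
Your proof is correct and is exactly the argument the paper intends: the lemma is stated as an immediate consequence of Theorem~\ref{theorem: characterizations bi caranti}, which recasts the bi-skew brace condition as $\gamma(a\cdot b)=\gamma(b)\gamma(a)$, after which the two-out-of-three claim follows by comparing the three pairwise equalities among $\gamma(a\cdot b)$, $\gamma(a)\gamma(b)$ and $\gamma(b)\gamma(a)$, just as you do. No gaps; your remark on uniform quantification over all $a,b\in A$ is the only point worth spelling out, and you handle it.
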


To conclude this section, we move our attention to solubility and nilpotency properties of skew braces. Define inductively  $A^{(1)}=A^1=A^{[1]}=A_1=A$, and
\begin{align*}
	A^{(n)}&=A^{(n-1)}\ast A,\\
	A^{n}&=A\ast A^{n-1},\\
	A^{[n]}&=\left\langle\bigcup_{i=1}^{n-1} A^{[i]}*A^{[n-i]}\right\rangle,\\
	A_n&=A_{n-1}\ast A_{n-1}
\end{align*} for all $n\ge 2$, where on the third line we mean the subgroup generated in $(A,\cdot)$. Here $A^{(n)}$ is an ideal of $A$, $A^{n}$ and  $A^{[n]}$ are left ideals of $A$, and $A_n$ is a subskew brace of $A$; see~\cite{CSV19,KSV21}.

\begin{definition}
	A skew brace $A$ is 
	\begin{itemize}
		\item \emph{right nilpotent} if there exists $n \ge 1$ such that $A^{(n)}=\{1\}$; if $n$ is minimal with this property, then we call $n-1$ the \emph{right nilpotency class} of $A$. 
		\item \emph{left nilpotent} if there exists $n \ge 1$ such that $A^{n}=\{1\}$; if $n$ is minimal with this property, then we call $n-1$ the \emph{left nilpotency class} of $A$. 
		\item \emph{strongly nilpotent} if there exists $n \ge 1$ such that $A^{[n]}=\{1\}$; if $n$ is minimal with this property, then we call $n-1$ the \emph{strong nilpotency class} of $A$. 
		\item \emph{soluble}  if there exists $n \ge 1$ such that $A_{n}=\{1\}$; if $n$ is minimal with this property, then we call $n-1$ the \emph{solubility class} of $A$. 
	\end{itemize}
\end{definition}
For two-sided braces, the above notions all coincide with nilpotency of the associated Jacobson radical ring. Note that when the skew brace is almost trivial, then $a*b$ is the commutator of $a^{-1}$ and $b$, so the above notions coincide with their group theoretical counterparts.
We have the following connection between left, right and strongly nilpotent skew braces, proved in~\cite[Theorem 12]{Smo18} for braces and generalised in ~\cite[Theorem 2.30]{CSV19} to skew braces.
\begin{theorem}\label{theorem: nilpotency}
    Let $A$ be a skew brace. Then the following are equivalent:
    \begin{enumerate}
        \item $A$ is left and right nilpotent.
        \item $A$ is strongly nilpotent.
    \end{enumerate}
\end{theorem}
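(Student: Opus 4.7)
The implication $(2) \Rightarrow (1)$ is the easier direction. I would prove by induction on $n$ that $A^n \subseteq A^{[n]}$ and $A^{(n)} \subseteq A^{[n]}$. The base case $n = 1$ is trivial, and the inductive step follows by observing that $A^{[n+1]}$ contains both $A^{[1]} * A^{[n]} \supseteq A * A^n = A^{n+1}$ (using the $i = 1$ term in the defining union) and $A^{[n]} * A^{[1]} \supseteq A^{(n)} * A = A^{(n+1)}$ (using the $i = n$ term), where the inclusions of generating sets lift to the subgroups they generate. So if $A^{[n]} = \{1\}$, then also $A^n = A^{(n)} = \{1\}$.

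For the harder direction $(1) \Rightarrow (2)$, the strategy is to bound $A^{[n]}$ by a \emph{mixed} expression involving simultaneously the left series $A^i$ and the right series $A^{(j)}$. Concretely, I would aim to establish by induction on $n$ an inclusion of the form
\begin{equation*}
A^{[n]} \; \subseteq \; \bigl\langle A^i * A^{(j)} : i,j \geq 1,\ i + j \geq n + 1 \bigr\rangle,
\end{equation*}
or a suitable variant thereof. Granted this, if $A^a = \{1\}$ and $A^{(b)} = \{1\}$, then taking $n = a + b - 1$ forces every generator $A^i * A^{(j)}$ with $i + j \geq a + b$ to be trivial, because one of $i \geq a$ or $j \geq b$ must hold; hence $A^{[n]} = \{1\}$ and $A$ is strongly nilpotent.

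The main obstacle is pushing through the inductive step of this mixed inclusion, because the operation $*$ does not distribute over $\cdot$ symmetrically: one has identities of the shape $(a \cdot b) * c$ and $a * (b \cdot c)$ expressing such products as combinations and $\gamma$-conjugates of simpler $*$-expressions, and combining them carefully is needed to rewrite a typical generator $A^{[i]} * A^{[n+1-i]}$ as a product of terms of the form $A^{i'} * A^{(j')}$ with a controlled sum $i' + j'$. A secondary technical point is that one must verify along the way that the natural containments of $A^n$ and $A^{(n)}$ (which are left ideals, with $A^{(n)}$ even an ideal) are stable under the subgroup closure in $(A, \cdot)$ and under left multiplication by $\gamma$-images that is implicit in expanding the $*$-products; this is where one would expect to invoke the identities for $A^n$ and $A^{(n)}$ as left ideals to keep the bookkeeping of indices under control.
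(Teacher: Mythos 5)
Your direction $(2)\Rightarrow(1)$ is correct: the containments $A^{n}\subseteq A^{[n]}$ and $A^{(n)}\subseteq A^{[n]}$ follow by the induction you describe, since an inclusion of generating sets passes to the subgroups they generate. Note, however, that the paper does not prove this theorem at all --- it quotes it from the literature (Smoktunowicz for braces, Ced\'o--Smoktunowicz--Vendramin for skew braces) --- so the relevant comparison is with those proofs, not with anything in this text.

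The direction $(1)\Rightarrow(2)$ is where the entire content of the theorem lies, and your proposal does not prove it: the mixed inclusion $A^{[n]}\subseteq\langle A^{i}*A^{(j)}\mid i+j\geq n+1\rangle$ is announced ``or a suitable variant thereof'' and its inductive step is explicitly left open. That step cannot be carried out in the way the sketch suggests. A typical generator of $A^{[n]}$ is $A^{[i]}*A^{[n-i]}$, and to dominate it by terms $A^{i'}*A^{(j')}$ one would want $A^{[i]}\subseteq A^{i'}$ and $A^{[n-i]}\subseteq A^{(j')}$; but by your own first part these containments go the wrong way ($A^{i}\subseteq A^{[i]}$ and $A^{(j)}\subseteq A^{[j]}$, with equality failing in general --- that failure is exactly why strong nilpotency is a stronger notion), so no term-by-term replacement is available. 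One is forced to genuinely rewrite the $*$-products via the identities $a*(b\cdot c)=(a*b)\cdot b\cdot(a*c)\cdot b^{-1}$ and $(a\circ b)*c=(a*(b*c))\cdot(b*c)\cdot(a*c)$, and the known argument (Smoktunowicz's) is a delicate double induction: right nilpotency is first used to show that $A^{[k]}\subseteq A^{i}$ for $k$ depending on $i$ and the right nilpotency class, and only then is left nilpotency invoked. This yields a bound on the strong nilpotency class that is exponential in the right nilpotency class; the linear bound $n=a+b-1$ implicit in your reduction is not known to hold, so the inclusion you aim for is a strictly stronger, unsubstantiated claim than the theorem itself. As it stands, the hard implication is a plan rather than a proof, and the plan's central step is precisely the open part.
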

We conclude with the following result, which was stated in~\cite[Proposition 2.4]{BCJO19} for braces but can easily be generalised to skew braces.
\begin{lemma}\label{lemma: soluble quotients and ideals}
	Let $A$ be a skew brace and let $I$ be an ideal. Then $A$ is a soluble skew brace if and only if $A/I$ and $I$ are soluble skew braces. 
\end{lemma}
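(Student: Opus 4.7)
The plan is to mimic the standard group-theoretic argument for solubility, using that the projection onto a quotient commutes with the $*$-operation and that $A_n$ is itself a subskew brace of $A$.

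For the forward direction, assume $A_n=\{1\}$ for some $n$. First I would show by straightforward induction on $n$ that $I_n\sub A_n$: since $I$ is a subskew brace of $A$, the generators $x*y$ with $x,y\in I_{n-1}\sub A_{n-1}$ lie among the generators of $A_n$, and the subgroup they generate inside $(I,\cdot)$ sits inside $A_n$. Hence $I_n=\{1\}$ and $I$ is soluble. For the quotient, let $\pi\colon A\to A/I$ be the canonical projection; since $\pi$ is a skew brace homomorphism, $\pi(x*y)=\pi(x)*\pi(y)$, and $\pi$ is surjective, a parallel induction yields $(A/I)_n=\pi(A_n)=\{1\}$.

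For the converse, suppose $(A/I)_n=\{1\}$ and $I_m=\{1\}$. From $\pi(A_n)=(A/I)_n=\{1\}$, I conclude $A_n\sub\ker(\pi)=I$. The key observation is then the shift identity $(A_n)_k=A_{n+k-1}$, proved by induction on $k$ using
\begin{equation*}
	(A_n)_{k+1}=(A_n)_k*(A_n)_k=A_{n+k-1}*A_{n+k-1}=A_{n+k}.
\end{equation*}
Since $A_n$ is a subskew brace and $A_n\sub I$, the first half of the argument (applied to $I$ and its subskew brace $A_n$) gives $(A_n)_m\sub I_m=\{1\}$, whence $A_{n+m-1}=\{1\}$ and $A$ is soluble.

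No step looks like a serious obstacle: the only mild care required is in checking that the $*$-generators of $A_n$ taken inside $I$ remain inside $A_n$ (which is immediate because $(I,\cdot)$ is a subgroup of $(A,\cdot)$) and in tracking the index shift $(A_n)_k=A_{n+k-1}$; both are routine inductions.
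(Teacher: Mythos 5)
Your proof is correct. The paper does not actually prove this lemma---it cites \cite[Proposition 2.4]{BCJO19} and remarks that the generalisation to skew braces is easy---and your argument is exactly the standard one: the derived series is monotone under passing to subskew braces and under surjective skew brace homomorphisms, and the shift identity $(A_n)_k=A_{n+k-1}$ combines the two halves.
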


\section{Structural results for bi-skew braces}\label{sec: structural result}

In this section, we deal with results on nilpotency and solubility of bi-skew braces. We start by relating the structure of a bi-skew brace $A$ with that of $A_\leftrightarrow$.

\begin{lemma}\label{lemma: ideals A and A leftrightarrow coincide}
Let $A$ be a bi-skew brace. Then the (left) ideals of $A$ and $A_{\leftrightarrow}$ coincide.
\end{lemma}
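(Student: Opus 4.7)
The plan is to exploit the symmetric description of left ideals afforded by the bi-skew brace structure, together with the relation $\gamma_\leftrightarrow(a)=\gamma(a)^{-1}$ recorded right before the lemma.

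First, I would observe that in \emph{any} skew brace, a left ideal $I$ is in fact stabilised by $\gamma(a)^{\pm 1}$ for every $a\in A$: applying the defining inclusion $\lexp{I}{\gamma(\overline{a})}\subseteq I$ and using $\gamma(\overline{a})=\gamma(a)^{-1}$ gives $\gamma(a)^{-1}(I)\subseteq I$, which combined with $\gamma(a)(I)\subseteq I$ yields $\gamma(a)(I)=I$. In addition, as already noted in the preliminaries, any left ideal of a skew brace is simultaneously a subgroup of the additive and of the multiplicative group. Hence a subset $I\subseteq A$ is a left ideal of $A=(A,\cdot,\circ)$ if and only if $I$ is a common subgroup of $(A,\cdot)$ and $(A,\circ)$ with $\gamma(a)(I)=I$ for all $a\in A$.

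This characterisation is manifestly symmetric in the two operations once we know that $A$ is a bi-skew brace. Indeed, the same description applied to $A_\leftrightarrow=(A,\circ,\cdot)$ asks for a common subgroup of $(A,\circ)$ and $(A,\cdot)$ stable under $\gamma_\leftrightarrow(a)=\gamma(a)^{-1}$ for all $a$. Since stability under $\gamma(a)$ and stability under $\gamma(a)^{-1}$ are equivalent on a subgroup, the two notions of left ideal coincide.

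For the parenthetical extension to ideals, the only additional requirement is normality in both groups $(A,\cdot)$ and $(A,\circ)$, which is a condition visibly invariant under swapping the two operations. Combined with the previous paragraph, this gives that ideals of $A$ and of $A_\leftrightarrow$ also coincide.

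I do not expect a real obstacle here; the content of the lemma is essentially the remark that a left ideal is automatically $\gamma(a)^{-1}$-invariant and simultaneously a subgroup of both operations, after which the symmetry between $A$ and $A_\leftrightarrow$ does the rest.
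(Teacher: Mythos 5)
Your proof is correct and follows essentially the same route as the paper: both rest on the identity $\gamma_\leftrightarrow(a)=\gamma(a)^{-1}=\gamma(\overline{a})$ together with the fact that a left ideal is automatically a subgroup of both group structures, so that stability under all $\gamma(a)$ and under all $\gamma_\leftrightarrow(a)$ are the same condition. Your explicit remark that a left ideal satisfies $\gamma(a)(I)=I$ (not merely $\subseteq$) is a slightly more careful phrasing of the same observation.
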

\begin{proof}
Let $I$ be a subskew brace of $A$. As $\gamma_{\leftrightarrow}(a)=\gamma(a)^{-1}=\gamma(\overline{a})$, we have that $I$ is mapped to itself by $\gamma(a)$ for all $a\in A$ if and only if $I$ is mapped to itself by $\gamma_{\leftrightarrow}(a)$ for all $a\in A$. 
\end{proof}

\begin{lemma}
Let $A$ be a bi-skew brace, and let $I$ be a left ideal of $A$. Then $A*I=A*_{\leftrightarrow}I$. If furthermore $I$ is an ideal, then $I*A=I*_{\leftrightarrow} A$.
\end{lemma}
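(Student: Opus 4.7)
The plan is to show that $A*I$ is a left ideal of $A$. Once this is established, the preceding lemma guarantees that $A*I$ is equally a left ideal of $A_{\leftrightarrow}$, and in particular a subgroup of $(A,\circ)$; since the identity $a*b = \overline{a}*_{\leftrightarrow}b$ shows that $A*I$ and $A*_{\leftrightarrow}I$ have the same generating set, this will force the two to coincide. The crucial input is that in a bi-skew brace each $\gamma(x)$ lies in $\Aut(A,\cdot)\cap\Aut(A,\circ)$: the first membership holds by definition, while the second follows because $\gamma_{\leftrightarrow}(x) = \gamma(x)^{-1}$ is, by construction, an automorphism of $(A,\circ)$.

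First I would derive, for all $x,a,b \in A$, the compatibility
\begin{equation*}
\lexp{(a*b)}{\gamma(x)} = \lexp{a}{\gamma(x)} * \lexp{b}{\gamma(x)},
\end{equation*}
by applying $\gamma(x)$ to the defining expression $a^{-1}\cdot (a\circ b)\cdot b^{-1}$ and distributing through using that $\gamma(x)$ respects both operations. Combined with the fact that $I$ is $\gamma$-invariant, this shows that the generating set $\{a*i : a\in A,\, i\in I\}$ of $A*I$ is stable under each $\gamma(x)$, and hence so is the whole subgroup $A*I$. Therefore $A*I$ is a left ideal of $A$.

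From here the argument is essentially formal. By the preceding lemma, $A*I$ is a subgroup of $(A,\circ)$; since it contains the same generators as $A*_{\leftrightarrow}I$, minimality of the latter yields $A*_{\leftrightarrow}I \subseteq A*I$. Swapping the roles of $A$ and $A_{\leftrightarrow}$ (using $(A_{\leftrightarrow})_{\leftrightarrow} = A$) gives the reverse inclusion. The equality $I*A = I*_{\leftrightarrow}A$ follows by the identical argument, applied to the generating set $\{i*a : i\in I,\, a\in A\}$, where the same compatibility of $\gamma(x)$ with $*$ ensures that $I*A$ is $\gamma$-invariant.

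The main obstacle is conceptual rather than computational: one must recognise that equating two subgroups generated by the same subset under different group operations can be achieved by showing that one of them is automatically closed under the other operation, and that left-ideality is precisely the tool that delivers this in the bi-skew setting. The actual verifications are then short and routine once the correct framing and the observation $\gamma(x)\in\Aut(A,\circ)$ are in place.
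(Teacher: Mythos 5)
Your strategy is essentially the paper's: the paper's entire justification for this lemma is the assertion that $a*b=\overline{a}*_{\leftrightarrow}b$ for all $a,b\in A$, and you reduce to the same claim. Your supplementary argument that $A*I$ is a left ideal --- so that it is simultaneously a subgroup of $(A,\cdot)$ and of $(A,\circ)$, which is what allows one to compare subgroups generated under two different operations --- is correct, and the compatibility $\lexpp{a*b}{\gamma(x)}=\lexp{a}{\gamma(x)}*\lexp{b}{\gamma(x)}$ does hold in a bi-skew brace for exactly the reason you give ($\gamma(x)\in\Aut(A,\cdot)\cap\Aut(A,\circ)$). This part is a genuine improvement on the paper, which leaves the change of ambient group entirely implicit.

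The gap is that the identity $a*b=\overline{a}*_{\leftrightarrow}b$, on which everything rests, is quoted but never verified, and it is in fact false as an element-wise identity. Since the $*$-operation of $A_{\leftrightarrow}$ is formed in its additive group $(A,\circ)$, one computes
\begin{equation*}
a*b=\lexp{b}{\gamma(a)}\cdot b^{-1},
\qquad
\overline{a}*_{\leftrightarrow}b=\lexp{b}{\gamma(a)}\circ\overline{b},
\end{equation*}
and the trailing factors $\cdot\, b^{-1}$ and $\circ\,\overline{b}$ are genuinely different operations. For instance, in the bi-skew brace $\opTriv(\Sym_3)$ one finds $a*b=\overline{b}\circ a\circ b\circ\overline{a}$ while $\overline{a}*_{\leftrightarrow}b=a\circ b\circ\overline{a}\circ\overline{b}$, and these differ for $a=(1\,2\,3)$, $b=(1\,2)$. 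So the two subgroups need not have the same generating set, and your inclusion argument does not close as written (the paper's one-line proof suffers from the same defect). The statement can still be rescued along your lines: putting $c=\lexp{i}{\gamma(a)}\in I$ and $k=c\circ\overline{i}\in I$, a direct computation gives $\overline{a}*_{\leftrightarrow}i=(a*i)\cdot(k*i)^{-1}$ with $k*i\in I*I\subseteq A*I$, so each generator of $A*_{\leftrightarrow}I$ lies in $A*I$; your left-ideal observation then yields $A*_{\leftrightarrow}I\subseteq A*I$, and the reverse inclusion follows by exchanging $A$ and $A_{\leftrightarrow}$. Supplying this comparison of generators (and an analogous one for $I*A=I*_{\leftrightarrow}A$, where the elements involved no longer all lie in $I$ and the argument needs further care) is the missing step.
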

\begin{proof}
Suppose that $I$ is a left ideal of $A$. Take $a\in A$ and $b\in I$. We have 
\begin{align}
    a*b&=\lexp{b}{\gamma(a)}\cdot b^{-1}=(\lexp{b}{\gamma(a)}\circ \overline{b}\circ b)\cdot b^{-1}\nonumber\\
    &=(\lexp{b}{\gamma(a)}\circ \overline{b})\cdot \lexp{b}{\gamma(\lexp{b}{\gamma(a)}\circ\overline{b})}\cdot b^{-1}\nonumber\\
    &=(\overline{a}*_\leftrightarrow b)\cdot ((\overline{a} *_\leftrightarrow b)*b).\label{eq: corrected lemma}
\end{align}
Hence $\overline{a}*_\leftrightarrow b=(a*b)\cdot ((\overline{a} *_\leftrightarrow b)*b)^{-1}\in A*I$, and thus $A*_\leftrightarrow I\subseteq A*I$. By a symmetric argument and Lemma \ref{lemma: ideals A and A leftrightarrow coincide}, we also obtain $A*I\subseteq A*_\leftrightarrow I$.

Suppose now that $I$ is an ideal. By \eqref{eq: corrected lemma} with $a\in I$ and $b\in A$ and Lemma~\ref{lemma: ideals A and A leftrightarrow coincide}, we get that $I*_\leftrightarrow A\subseteq  I*A$. Therefore the result follows by a symmetric argument.
\end{proof}

As a consequence, we derive the following propositions. 
\begin{proposition}
    Let $A$ be a bi-skew brace. Then $A$ is soluble of class $n$ if and only if $A_{\leftrightarrow}$ is soluble of class $n$.
\end{proposition}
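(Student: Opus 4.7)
The plan is to establish the stronger identity $A_n = (A_{\leftrightarrow})_n$ as subsets of $A$ for every $n \ge 1$, from which the proposition follows immediately by reading off the least $n$ with $A_n = \{1\}$.

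I would proceed by induction on $n$. The base case $n=1$ is trivial, since both sides are all of $A$. For the inductive step, write $B = A_{n-1} = (A_{\leftrightarrow})_{n-1}$ and consider the set $S = \{b * b' : b, b' \in B\}$. The preceding remark gives $a * b = \overline{a} *_{\leftrightarrow} b$, and $B$, being a subskew brace, is closed under $\overline{(\cdot)}$; replacing $b$ by $\overline{b}$ in the first argument therefore yields $S = \{b *_{\leftrightarrow} b' : b, b' \in B\}$. Thus $A_n = B * B$ and $(A_{\leftrightarrow})_n = B *_{\leftrightarrow} B$ are generated by one and the same set $S$.

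The subtle point, and the only real obstacle, is that $A_n$ is the subgroup of $(A, \cdot)$ generated by $S$, whereas $(A_{\leftrightarrow})_n$ is the subgroup of $(A, \circ)$ generated by $S$, and a priori these two subgroups need not coincide. Here I would invoke the result from~\cite{CSV19} recalled in the preliminaries: $A_n$ is a subskew brace of $A$, hence in particular a subgroup of $(A, \circ)$ containing $S$, so it contains the subgroup of $(A, \circ)$ generated by $S$, which is $(A_{\leftrightarrow})_n$. Symmetrically, $(A_{\leftrightarrow})_n$ is a subskew brace of $A_{\leftrightarrow}$, hence a subgroup of $(A, \cdot)$ containing $S$, and therefore contains $A_n$. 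The two containments give $A_n = (A_{\leftrightarrow})_n$, closing the induction and completing the proof.
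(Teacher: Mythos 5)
Your reduction to the claim $A_n=(A_{\leftrightarrow})_n$ and, in particular, your double-containment argument for the inductive step --- using that $A_n$ and $(A_{\leftrightarrow})_n$ are subskew braces, hence subgroups of both $(A,\cdot)$ and $(A,\circ)$, in order to compare subgroups generated inside two different groups --- is exactly the right way to organise the proof, and it is in fact more careful than the paper, which derives the proposition from a lemma stated only for left ideals, whereas the terms $A_{n-1}$ of the solubility series are a priori only subskew braces.

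There is, however, a genuine gap at the step $S=\{b*_{\leftrightarrow}b'\mid b,b'\in B\}$: the pointwise identity $a*b=\overline{a}*_{\leftrightarrow}b$, which you take from the remark preceding the proposition, is false for a general bi-skew brace. Unwinding the definitions and using that $\gamma$ is an antihomomorphism on $(A,\cdot)$, one finds
\begin{equation*}
\overline{a}*_{\leftrightarrow}b=\lexp{b}{\gamma(a)}\cdot\lexpp{b^{-1}}{\gamma(a)\gamma(b)\gamma(a)^{-1}\gamma(b)^{-1}},
\end{equation*}
which equals $a*b=\lexp{b}{\gamma(a)}\cdot b^{-1}$ precisely when $\gamma(a)$ and $\gamma(b)$ commute. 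In $\opTriv(\Sym_3)$ one computes $a*b=\overline{b}\circ a\circ b\circ\overline{a}$ while $\overline{a}*_{\leftrightarrow}b=a\circ b\circ\overline{a}\circ\overline{b}$, and for $a=(1\,2)$, $b=(1\,3)$ these are the two distinct $3$-cycles; so the two generating sets need not coincide elementwise. The equality you need does survive at the level of generated subgroups: setting $c=a\circ b\circ\overline{a}\circ\overline{b}\in B$, the displayed formula can be rewritten as $\overline{a}*_{\leftrightarrow}b=(a*b)\cdot(c*b)^{-1}$, so every element $x*_{\leftrightarrow}y$ with $x,y\in B$ lies in the subgroup $B*B$ of $(A,\cdot)$; since $B*B$ is itself a subskew brace it is also a subgroup of $(A,\circ)$, giving $B*_{\leftrightarrow}B\subseteq B*B$, and the reverse inclusion follows by applying the same argument to $A_{\leftrightarrow}$. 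With this repair your induction closes, but as written the set equality $S=\{b*_{\leftrightarrow}b'\mid b,b'\in B\}$ is unjustified.
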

\begin{proposition}
    Let $A$ be a bi-skew brace. Then $A$ is left, respectively right, nilpotent of class $n$ if and only if $A_{\leftrightarrow}$ is left, respectively right, nilpotent of class $n$.
\end{proposition}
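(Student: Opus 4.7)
The plan is to prove by induction on $n$ that $A^n = A_{\leftrightarrow}^n$ and $A^{(n)} = A_{\leftrightarrow}^{(n)}$ as subsets of $A$; the proposition then follows at once, since each notion of nilpotency is defined by the smallest $n$ with the corresponding series reaching $\{1\}$. The key input is the previous lemma, which allows us to swap $*$ for $*_{\leftrightarrow}$ as soon as one of the factors is a left ideal of $A$ (equivalently, of $A_{\leftrightarrow}$, by the ideal-coincidence lemma).

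For left nilpotency, the base case $A^1 = A = A_{\leftrightarrow}^1$ is immediate. For the inductive step, assume $A^n = A_{\leftrightarrow}^n$. The excerpt records that $A^n$ is a left ideal of $A$, so the preceding lemma gives $A * A^n = A *_{\leftrightarrow} A^n$. Combining this with the induction hypothesis,
\begin{equation*}
    A^{n+1} = A * A^n = A *_{\leftrightarrow} A^n = A_{\leftrightarrow} *_{\leftrightarrow} A_{\leftrightarrow}^n = A_{\leftrightarrow}^{n+1},
\end{equation*}
which closes the induction. Hence $A^n = \{1\}$ iff $A_{\leftrightarrow}^n = \{1\}$, and the minimal such $n$ is the same on both sides.

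For right nilpotency, the argument is essentially identical, with $A^{(n)}$ playing the role of $A^n$. The base case is again trivial. Since the excerpt records that $A^{(n)}$ is an ideal of $A$, in particular a left ideal, the previous lemma yields $A^{(n)} * A = A^{(n)} *_{\leftrightarrow} A$, and the inductive hypothesis $A^{(n)} = A_{\leftrightarrow}^{(n)}$ then gives $A^{(n+1)} = A_{\leftrightarrow}^{(n+1)}$.

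The argument is essentially one step, so there is no real obstacle beyond making sure the hypothesis of the previous lemma is satisfied at each stage; this is taken care of by recalling that $A^n$ is a left ideal and $A^{(n)}$ is an ideal, both of which were stated in the preliminaries.
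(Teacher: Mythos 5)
Your proof is correct and follows exactly the route the paper intends: the paper derives this proposition as an immediate consequence of the lemma $A*I=A*_{\leftrightarrow}I$, $I*A=I*_{\leftrightarrow}A$ for left ideals $I$, and your induction (using that $A^n$ is a left ideal and $A^{(n)}$ an ideal, as recorded in the preliminaries) is precisely the routine verification being left to the reader.
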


We show now that we can check whether a bi-skew brace is right nilpotent or soluble just looking at a suitable group. A prominent role is played by Theorem~\ref{theorem: characterizations bi caranti} and Remark~\ref{rem: extra condition bi}. Also the following lemma is used throughout the rest of this section.
\begin{lemma}\label{lem: ker ideal if bi}
Let $A$ be a bi-skew brace. Then $\ker (\gamma)$ is an ideal of $A$.
\end{lemma}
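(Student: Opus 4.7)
My plan is to verify the three defining properties of an ideal for $\ker(\gamma)$: that it is a subgroup of both $(A,\cdot)$ and $(A,\circ)$, that it is normal in both, and that it is stable under every $\gamma(a)$. The main tool will be the bi-skew brace characterisation from Theorem~\ref{theorem: characterizations bi caranti}(2), namely $\gamma(a\cdot b)=\gamma(b)\gamma(a)$, combined with the defining gamma function identity~\eqref{eq: gamma function}.

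First I would dispose of the easy half. Since $\gamma\colon(A,\circ)\to\Aut(A,\cdot)$ is a group homomorphism, $\ker(\gamma)$ is automatically a normal subgroup of $(A,\circ)$. For the multiplicative structure, the identity $\gamma(a\cdot b)=\gamma(b)\gamma(a)$ means $\gamma$ is an anti-homomorphism from $(A,\cdot)$ to $\Aut(A,\cdot)$, so its kernel is equally a normal subgroup of $(A,\cdot)$; closure under $\cdot$, under $\cdot$-inverses, and conjugation invariance in $(A,\cdot)$ all follow from routine manipulations such as $\gamma(a\cdot k\cdot a^{-1})=\gamma(a^{-1})\gamma(k)\gamma(a)=\gamma(a^{-1})\gamma(a)=\gamma(a\cdot a^{-1})=\id$ when $k\in\ker(\gamma)$.

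The step I would expect to be the only real content is the left ideal property: given $a\in A$ and $k\in\ker(\gamma)$, I must show $\lexp{k}{\gamma(a)}\in\ker(\gamma)$. The plan is to apply the bi-skew brace identity to the left-hand side of~\eqref{eq: gamma function}: rewriting $\gamma(a\cdot \lexp{b}{\gamma(a)})$ as $\gamma(\lexp{b}{\gamma(a)})\gamma(a)$ and equating with $\gamma(a)\gamma(b)$ yields the conjugation formula
\begin{equation*}
    \gamma(\lexp{b}{\gamma(a)})=\gamma(a)\gamma(b)\gamma(a)^{-1}
\end{equation*}
valid for all $a,b\in A$. Specialising to $b=k\in\ker(\gamma)$ gives $\gamma(\lexp{k}{\gamma(a)})=\id$, which is exactly what is needed.

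Putting the three ingredients together shows that $\ker(\gamma)$ is a subgroup of $(A,\cdot)$ stable under all $\gamma(a)$, i.e.\ a left ideal, and moreover normal in both $(A,\cdot)$ and $(A,\circ)$, completing the proof. The only genuinely bi-skew brace input is the conjugation formula above; everything else is formal from the hypothesis and the general skew brace axioms.
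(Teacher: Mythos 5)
Your proof is correct, but it takes a more hands-on route than the paper. The paper's proof is a one-liner: since $\gamma$ is always a homomorphism $(A,\circ)\to\Aut(A,\cdot)$ and, by the bi-skew condition, an antihomomorphism on $(A,\cdot)$, the map $\gamma\colon A\to \opTriv(\Aut(A,\cdot))$ is a skew brace homomorphism, and kernels of skew brace homomorphisms are ideals (a fact recorded in the preliminaries). Your argument unpacks exactly this: normality in $(A,\circ)$ from the homomorphism property, normality in $(A,\cdot)$ from the antihomomorphism property, and $\gamma$-stability from the conjugation identity $\gamma(\lexp{b}{\gamma(a)})=\gamma(a)\gamma(b)\gamma(a)^{-1}$, which you correctly derive by combining \eqref{eq: gamma function} with Theorem~\ref{theorem: characterizations bi caranti}(2); this is the specialisation to $\ker(\gamma)$ of the general fact that a skew brace homomorphism $f$ satisfies $f(\lexp{b}{\gamma(a)})=\lexpp{f(b)}{\gamma(f(a))}$. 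What the paper's phrasing buys is brevity and a conceptual reading of the bi-skew condition (it makes $\gamma$ a morphism into an almost trivial skew brace); what yours buys is self-containedness and an explicit appearance of the conjugation formula, which resurfaces later in Theorem~\ref{thm:main}. No gaps.
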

\begin{proof}
It suffices to note that $\gamma\colon A\to \opTriv(\Aut(A,\cdot))$ is well-defined skew brace homomorphism. 
\end{proof}
\begin{theorem}\label{theorem: A nilpotent if im gamma is nilpotent} 
Let $A\neq \{1\}$ be a bi-skew brace. Then $A$ is right nilpotent of class $n+1$ if and only if $\gamma(A)$ is a nilpotent group of class $n$.
\end{theorem}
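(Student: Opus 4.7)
The plan is to show that in a bi-skew brace the descending sequence $A=A^{(1)}\supseteq A^{(2)}\supseteq \cdots$ is mapped by $\gamma$ precisely onto the lower central series of $G:=\im(\gamma)$, so that right nilpotency of $A$ corresponds to nilpotency of $G$ with a single index shift. The key input is the conjugation identity
\[
\gamma(\lexp{b}{\gamma(a)})=\gamma(a)\,\gamma(b)\,\gamma(a)^{-1},\qquad a,b\in A,
\]
which I would derive by applying $\gamma$ to both sides of \eqref{eq: circ wrt gamma}: the left-hand side becomes $\gamma(a)\gamma(b)$ since $\gamma\colon (A,\circ)\to \Aut(A,\cdot)$ is a homomorphism, while the bi-skew brace identity $\gamma(x\cdot y)=\gamma(y)\gamma(x)$ from Theorem~\ref{theorem: characterizations bi caranti} turns the right-hand side into $\gamma(\lexp{b}{\gamma(a)})\gamma(a)$. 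Combining this with the same identity yields
\[
\gamma(a*b)=\gamma(b)^{-1}\gamma(a)\gamma(b)\gamma(a)^{-1}.
\]

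Next, writing $\Gamma_k$ for the $k$-th term of the lower central series of $G$, I would prove by induction on $k\ge 1$ that $\gamma(A^{(k)})=\Gamma_k$. Here I use that, by the bi-skew brace identity, $\gamma\colon(A,\cdot)\to \Aut(A,\cdot)$ is a group anti-homomorphism, and therefore sends subgroups to subgroups. The base case is immediate. For the inductive step, $\gamma(A^{(k+1)})$ is the subgroup of $\Aut(A,\cdot)$ generated by $\{\gamma(c*b):c\in A^{(k)},\ b\in A\}$, which by the formula above equals $\{\gamma(b)^{-1}\gamma(c)\gamma(b)\gamma(c)^{-1}\}$; as $\gamma(c)$ ranges over $\Gamma_k$ by induction and $\gamma(b)^{-1}$ ranges over all of $G$, this set generates exactly $[G,\Gamma_k]=\Gamma_{k+1}$.

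To conclude, observe that $c*b=1$ is equivalent to $\lexp{b}{\gamma(c)}=b$, so $A^{(k+1)}=A^{(k)}*A=\{1\}$ is equivalent to $A^{(k)}\subseteq \ker(\gamma)$, which in turn amounts to $\Gamma_k=\gamma(A^{(k)})=\{1\}$. Therefore the smallest $m$ with $A^{(m)}=\{1\}$ is $n+2$ if and only if the smallest $k$ with $\Gamma_k=\{1\}$ is $n+1$; equivalently, $A$ has right nilpotency class $n+1$ if and only if $G$ has nilpotency class $n$. The hypothesis that $A$ is nonzero guarantees both classes are well-defined. The main obstacle is the initial conjugation identity: it is characteristic of bi-skew braces and would fail for a general skew brace, but once established the induction and the concluding equivalence are essentially mechanical.
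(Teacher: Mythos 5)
Your proof is correct, but it takes a genuinely different route from the paper's. The paper argues structurally: since $A$ is a bi-skew brace, $\ker(\gamma)$ is an ideal (Lemma~\ref{lem: ker ideal if bi}) which is trivial as a skew brace, so passing to $A/\ker(\gamma)$ drops the right nilpotency class by exactly one; and since $A_\op^2\subseteq\ker(\gamma)$, this quotient is an almost trivial skew brace, whose right nilpotency is precisely the nilpotency of its multiplicative group $(A/\ker(\gamma),\circ)\cong\im(\gamma)$. You never form the quotient: you establish the conjugation identity $\gamma(\lexp{b}{\gamma(a)})=\gamma(a)\gamma(b)\gamma(a)^{-1}$ (valid exactly because $\gamma$ is simultaneously a homomorphism on $(A,\circ)$ and an antihomomorphism on $(A,\cdot)$), deduce $\gamma(a*b)=\gamma(b)^{-1}\gamma(a)\gamma(b)\gamma(a)^{-1}$, and show by induction that $\gamma(A^{(k)})$ is the $k$-th term of the lower central series of $\im(\gamma)$. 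The two arguments rest on the same underlying fact---modulo $\ker(\gamma)$ the operation $*$ becomes a group commutator---but yours is more computational and self-contained (no appeal to ideals, quotient skew braces, or the almost-trivial observation), while the paper's is shorter, reuses its structural lemmas, and transfers verbatim to solubility (Proposition~\ref{prop: A soluble if im gamma is soluble}), where your approach would require redoing the induction for the derived series. The steps you leave implicit all check out: $\gamma(b^{-1})=\gamma(b)^{-1}$ follows from the antihomomorphism property, the image under an antihomomorphism of the subgroup generated by a set is the subgroup generated by the image of that set, and the final index bookkeeping (including the role of the hypothesis $A\neq\{1\}$) is right.
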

\begin{proof}
As $A$ is a bi-skew brace, $\ker(\gamma)$ is an ideal, trivial as a skew brace. Because $A\neq \{1\}$, it is clear that $A$ is right nilpotent of class $n+1$ if and only if $A/\ker(\gamma)$ is right nilpotent of class $n$. 
    
As $A_\op^2\subseteq \ker(\gamma)$, we know that $A/\ker(\gamma)$ is an almost trivial skew brace. In particular, $A/\ker(\gamma)$ is right nilpotent of class $n$ if and only if the group $(A/\ker(\gamma),\circ)$ is nilpotent of class $n$, and $(A/\ker(\gamma),\circ)$ is clearly isomorphic to $\gamma(A)$. 
\end{proof}
\begin{corollary}
    Let $A$ be a bi-skew brace such that $(A,\cdot)$ or $(A,\circ)$ are nilpotent. Then $A$ is right nilpotent.
\end{corollary}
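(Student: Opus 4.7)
The plan is to reduce the statement to Theorem~\ref{theorem: A nilpotent if im gamma is nilpotent}, which already tells us that right nilpotency of $A$ is controlled by nilpotency of $\im(\gamma)$. So it suffices to show that under either hypothesis, $\im(\gamma)$ is a nilpotent group.

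First I would dispose of the case $A=\{1\}$ trivially, so assume $A$ is nonzero. Now recall the two homomorphism properties of $\gamma$ available for a bi-skew brace. On one hand, the defining property of a gamma function together with \eqref{eq: circ wrt gamma} gives $\gamma\colon (A,\circ)\to \Aut(A,\cdot)$ a group homomorphism, so $\im(\gamma)$ is a quotient of $(A,\circ)$. On the other hand, Theorem~\ref{theorem: characterizations bi caranti}(2) says $\gamma(a\cdot b)=\gamma(b)\gamma(a)$, which means $\gamma$ is also a group antihomomorphism from $(A,\cdot)$ to $\Aut(A,\cdot)$, equivalently a group homomorphism from $(A,\cdot)$ to $\Aut(A,\cdot)^{\op}$. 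Thus $\im(\gamma)$, viewed up to the opposite group operation (which preserves nilpotency and its class), is also a quotient of $(A,\cdot)$.

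If $(A,\cdot)$ is nilpotent, then $\im(\gamma)$ is a quotient of a nilpotent group, hence nilpotent; similarly if $(A,\circ)$ is nilpotent. In either case, Theorem~\ref{theorem: A nilpotent if im gamma is nilpotent} yields that $A$ is right nilpotent (of class exactly one more than the nilpotency class of $\im(\gamma)$).

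There is no real obstacle here; the only subtlety is simply remembering that the bi-skew brace hypothesis upgrades $\gamma$ from being just a homomorphism on $(A,\circ)$ to being an (anti)homomorphism on $(A,\cdot)$ as well, which is exactly what lets us transfer nilpotency from either of the two group operations to $\im(\gamma)$.
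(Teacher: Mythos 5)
Your proof is correct and follows essentially the same route as the paper: reduce to Theorem~\ref{theorem: A nilpotent if im gamma is nilpotent} by observing that $\im(\gamma)$ is a quotient of $(A,\circ)$ and (via the antihomomorphism property from Theorem~\ref{theorem: characterizations bi caranti}) of $(A,\cdot)$, so nilpotency transfers from either group. Your explicit justification of why $\im(\gamma)$ is a quotient of $(A,\cdot)$ is a welcome elaboration of a step the paper leaves implicit.
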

\begin{proof}
    It suffices to note that $\gamma(A)$ is a quotient of both $(A,\cdot)$ and $(A,\circ)$, and then to apply Theorem~\ref{theorem: A nilpotent if im gamma is nilpotent}. 
\end{proof}
\begin{corollary}
Let $A$ be a bi-skew brace. Then $A$ is left nilpotent if and only if $A$ is strongly nilpotent.
\end{corollary}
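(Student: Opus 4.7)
The plan is to use the earlier characterisation that strong nilpotency equals left-plus-right nilpotency, so the forward direction is immediate and the real content is showing that a left nilpotent bi-skew brace is automatically right nilpotent.

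For the nontrivial direction, I would first exploit Lemma~\ref{lem: ker ideal if bi} to pass to the quotient $B = A/\ker(\gamma)$, which remains a bi-skew brace and inherits left nilpotency since left nilpotency descends to quotients (the ideal $A^n$ surjects onto $B^n$). By Theorem~\ref{theorem: characterizations bi caranti} we have $A_\op^2 \subseteq \ker(\gamma)$, so $B_\op^2 = \{1\}$, meaning $B$ is an almost trivial skew brace: its two operations satisfy $b_1 \cdot b_2 = b_2 \circ b_1$.

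The key computation is then to observe that in an almost trivial skew brace, $\gamma(b_1)$ is the conjugation $b_2 \mapsto b_1^{-1} \cdot b_2 \cdot b_1$, so
\[
 b_1 * b_2 = \lexp{b_2}{\gamma(b_1)} \cdot b_2^{-1} = [b_1^{-1}, b_2],
\]
and an easy induction shows that $B^n$ coincides with the $n$-th term of the lower central series of $(B,\cdot)$. Hence left nilpotency of $B$ as a skew brace is equivalent to nilpotency of $(B,\cdot)$ as a group, and since $B$ is almost trivial, $(B,\cdot)$ and $(B,\circ)$ are anti-isomorphic, so $(B,\circ)$ is nilpotent as well. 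But $(B,\circ) = (A/\ker(\gamma), \circ) \cong \im(\gamma)$, so $\im(\gamma)$ is a nilpotent group.

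Applying Theorem~\ref{theorem: A nilpotent if im gamma is nilpotent} now yields that $A$ is right nilpotent, and combining this with the hypothesis of left nilpotency, the theorem characterising strong nilpotency as the conjunction of left and right nilpotency gives strong nilpotency of $A$. The main obstacle I anticipate is just verifying cleanly that left nilpotency genuinely passes through the two reductions (to $A/\ker(\gamma)$ and then to group nilpotency of $\im(\gamma)$); everything else is bookkeeping around results already in the excerpt.
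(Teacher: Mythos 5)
Your proof is correct and follows essentially the same route as the paper: pass to the almost trivial quotient $A/\ker(\gamma)$, identify left nilpotency there with nilpotency of the group $\im(\gamma)$, and conclude via Theorem~\ref{theorem: A nilpotent if im gamma is nilpotent}. You merely spell out details (the commutator computation in an almost trivial skew brace, the descent of left nilpotency to quotients) that the paper leaves implicit.
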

\begin{proof}
By Theorem~\ref{theorem: nilpotency}, it suffices to show that in this case left nilpotency implies right nilpotency. If $A$ is left nilpotent, then so is the skew brace $A/\ker (\gamma)$. But as $A/\ker (\gamma)$ is almost trivial, we find that this is equivalent to the group $(A/\ker(\gamma),\circ)\cong \gamma(A)$ being nilpotent. The result then follows from Theorem~\ref{theorem: A nilpotent if im gamma is nilpotent}.
\end{proof}
\begin{proposition}\label{prop: A soluble if im gamma is soluble}
Let $A$ be a bi-skew brace. Then $A$ is a soluble skew brace if and only if $\gamma(A)$ is a soluble group.
\end{proposition}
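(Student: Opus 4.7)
The plan is to run the same reduction that worked for Theorem~\ref{theorem: A nilpotent if im gamma is nilpotent}, but this time passing through Lemma~\ref{lemma: soluble quotients and ideals} rather than using the direct implication $A \neq \{1\} \Rightarrow$ class drops by $1$. Concretely, I would take the ideal $I = \ker(\gamma)$, which is an ideal of $A$ by Lemma~\ref{lem: ker ideal if bi}, so by Lemma~\ref{lemma: soluble quotients and ideals} it suffices to show that both $\ker(\gamma)$ and $A/\ker(\gamma)$ are soluble under our hypothesis, and conversely that each piece being soluble forces $\im(\gamma)$ to be soluble.

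First I would dispose of $\ker(\gamma)$. If $a \in \ker(\gamma)$ then $\gamma(a) = \id$, so $a \circ b = a \cdot b$ for all $b$; hence $\ker(\gamma)$, viewed as a subskew brace, is trivial. Then $x * y = 1$ for all $x, y \in \ker(\gamma)$, so $(\ker(\gamma))_2 = \{1\}$ and $\ker(\gamma)$ is always soluble. Therefore by Lemma~\ref{lemma: soluble quotients and ideals} the solubility of $A$ is equivalent to the solubility of the quotient $A/\ker(\gamma)$.

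Next I would show that $A/\ker(\gamma)$ is soluble if and only if $\im(\gamma)$ is soluble. By Theorem~\ref{theorem: characterizations bi caranti} we have $A^2_\op \subseteq \ker(\gamma)$, so in the quotient $A/\ker(\gamma)$ the operation $*_\op$ is identically trivial; equivalently, $A/\ker(\gamma)$ is an almost trivial skew brace. As noted in the preliminaries, for an almost trivial skew brace the operation $*$ coincides with the group commutator, so skew brace solubility reduces to ordinary group solubility of the multiplicative group $(A/\ker(\gamma),\circ)$. Finally, $\gamma \colon (A,\circ) \to \Aut(A,\cdot)$ is a group homomorphism with image $\im(\gamma)$, so by the first isomorphism theorem $(A/\ker(\gamma),\circ) \cong \im(\gamma)$, and the result follows by chaining the equivalences.

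The argument is essentially parallel to the nilpotency case, and I do not expect a serious obstacle: the only point that requires a moment's care is the replacement of the ``class $n+1$ versus class $n$'' bookkeeping used in Theorem~\ref{theorem: A nilpotent if im gamma is nilpotent} with Lemma~\ref{lemma: soluble quotients and ideals}, which is needed because solubility does not have the same simple behaviour under extension by a trivial ideal as right nilpotency does. Once one observes that $\ker(\gamma)$ is trivial as a skew brace, and hence automatically soluble, the reduction to the almost trivial quotient $A/\ker(\gamma) \cong \opTriv(\im(\gamma))$ goes through cleanly.
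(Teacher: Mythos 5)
Your proof is correct and follows essentially the same route as the paper: reduce to $A/\ker(\gamma)$ via Lemma~\ref{lemma: soluble quotients and ideals} using that $\ker(\gamma)$ is an ideal that is trivial (hence soluble) as a skew brace, observe that $A^2_\op\subseteq\ker(\gamma)$ makes the quotient almost trivial so that its solubility is that of $(A/\ker(\gamma),\circ)\cong\im(\gamma)$. The only difference is that you spell out explicitly why the trivial ideal $\ker(\gamma)$ is soluble, which the paper leaves implicit.
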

\begin{proof}
    As $A$ is a bi-skew brace, $\ker(\gamma)$ is an ideal, trivial as a skew brace. Therefore $A$ is soluble if and only if $A/\ker(\gamma)$ is soluble, by Lemma~\ref{lemma: soluble quotients and ideals}. 
    
    Now as $A_\op^2\subseteq \ker(\gamma)$, we know that $A/\ker(\gamma)$ is an almost trivial skew brace. In particular, $A/\ker(\gamma)$ is soluble if and only if the group $(A/\ker(\gamma),\circ)$ is soluble, and $(A/\ker(\gamma),\circ)$ is clearly isomorphic to $\gamma(A)$.
\end{proof}

We conclude this section by proving Byott's conjecture in the case of bi-skew braces. 
\begin{conjecture}[Byott's Conjecture]
	Let $A$ be a finite skew brace. If $(A,\cdot)$ is soluble, then $(A,\circ)$ is soluble. 
\end{conjecture}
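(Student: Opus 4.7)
The statement as displayed is Byott's conjecture for arbitrary finite skew braces, which to my knowledge remains an open problem; none of the preceding results of the paper is strong enough to settle it in full generality. What the paper can and does establish, and what I will therefore propose to prove, is the special case: \emph{if $A=(A,\cdot,\circ)$ is a finite bi-skew brace with $(A,\cdot)$ soluble, then $(A,\circ)$ is soluble.} A general proof would presumably require a reduction to simple skew braces together with the classification of finite simple groups, and is not within easy reach of the tools developed here.

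The plan for the bi-skew brace case is to split $(A,\circ)$ by the kernel of $\gamma$ and check solubility on both pieces. First, I would set $K=\ker(\gamma)$. By Lemma~\ref{lem: ker ideal if bi}, $K$ is an ideal of $A$, and, as in the proof of Theorem~\ref{theorem: A nilpotent if im gamma is nilpotent}, $K$ is trivial as a skew brace, so $(K,\cdot)=(K,\circ)$. Since $(A,\cdot)$ is soluble, its subgroup $(K,\cdot)$ is soluble, and therefore $(K,\circ)$ is soluble.

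Next, I would exploit Theorem~\ref{theorem: characterizations bi caranti}(2), which says $\gamma(a\cdot b)=\gamma(b)\gamma(a)$. This makes $\gamma$ a group anti-homomorphism $(A,\cdot)\to \Aut(A,\cdot)$, so $\im(\gamma)$ is a quotient of $(A,\cdot)^{\op}\cong (A,\cdot)$ (via inversion). The hypothesis that $(A,\cdot)$ is soluble then forces $\im(\gamma)$ to be soluble. Since $\gamma$ also descends to an isomorphism of groups $(A,\circ)/K\cong \im(\gamma)$, the short exact sequence
\[
1\to (K,\circ)\to (A,\circ)\to \im(\gamma)\to 1
\]
exhibits $(A,\circ)$ as an extension of a soluble group by a soluble group, and hence $(A,\circ)$ is soluble, as desired.

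The main conceptual point, and the one piece that deserves care in the write-up, is the separation between solubility of a skew brace and solubility of its underlying groups. The bi-skew assumption is exactly what collapses this distinction: it is used first through Lemma~\ref{lem: ker ideal if bi} to make $(K,\cdot)=(K,\circ)$, and second through Theorem~\ref{theorem: characterizations bi caranti}(2) to pass solubility from $(A,\cdot)$ to the quotient group $(A,\circ)/K$. Outside the bi-skew setting I see no such bridge, which is precisely why the conjecture is settled here only in this special case.
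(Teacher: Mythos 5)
Your proposal is correct, and your framing is the right one: the displayed statement is Byott's conjecture, which the paper itself leaves open in general and only settles for bi-skew braces in the theorem that follows; your proof of that special case is sound. Your route differs from the paper's in the choice of decomposition. The paper splits $(A,\circ)$ along the ideal $A_\op^2$: since $A_\op^2\subseteq\ker(\gamma)$ the ideal is a trivial skew brace, while the quotient $A/A_\op^2$ is almost trivial, so in both pieces the two group structures are (anti-)isomorphic and solubility transfers from $(A,\cdot)$ termwise. You instead split along the (generally larger) ideal $K=\ker(\gamma)$, handle $K$ the same way, and for the quotient invoke Theorem~\ref{theorem: characterizations bi caranti}(2) to see that $\im(\gamma)\cong(A,\circ)/K$ is a quotient of $(A,\cdot)^{\op}\cong(A,\cdot)$, hence soluble. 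The paper's choice of $A_\op^2$ makes the two directions of the equivalence visibly symmetric (each piece is trivial or almost trivial, so $\cdot$ and $\circ$ can be interchanged throughout), and it feeds directly into the "moreover" clause via Proposition~\ref{prop: A soluble if im gamma is soluble}; your choice of $\ker(\gamma)$ is marginally more economical for the single implication asserted by the conjecture, at the cost of needing a separate (but symmetric) argument for the converse, where one would note that $\gamma$ is also a homomorphism on $(A,\circ)$ so $\im(\gamma)$ is likewise a quotient of $(A,\circ)$. Your closing remark that no such bridge exists outside the bi-skew setting matches the paper's own caveat that simple skew braces which are neither trivial nor almost trivial obstruct this method in general.
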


\begin{theorem}
    Let $A$ be a bi-skew brace. Then $(A,\cdot)$ is soluble if and only if $(A,\circ)$ is soluble. Moreover, in this case also $A$ is soluble as a skew brace.
\end{theorem}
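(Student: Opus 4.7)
The plan is to reduce both claims to the structural fact (already established in Lemma~\ref{lem: ker ideal if bi} and Theorem~\ref{theorem: characterizations bi caranti}) that $\ker(\gamma)$ is an ideal, trivial as a skew brace, and that $A/\ker(\gamma)$ is almost trivial.

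First I would observe two consequences of the preceding setup. Since $\ker(\gamma)$ is trivial as a subskew brace, the operations $\cdot$ and $\circ$ coincide on $\ker(\gamma)$; in particular it is the same abstract group whether viewed inside $(A,\cdot)$ or $(A,\circ)$. Since $A/\ker(\gamma)$ is almost trivial, we have $a\cdot b = b\circ a$ in the quotient, so the map $a\mapsto \overline{a}$ is a group isomorphism $(A/\ker(\gamma),\cdot)\to (A/\ker(\gamma),\circ)$ between opposite groups; in particular one of these two groups is soluble if and only if the other is.

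Now I would exploit the two short exact sequences of groups
\begin{equation*}
    1\longrightarrow \ker(\gamma) \longrightarrow (A,\cdot) \longrightarrow (A/\ker(\gamma),\cdot) \longrightarrow 1,
\end{equation*}
\begin{equation*}
    1\longrightarrow \ker(\gamma) \longrightarrow (A,\circ) \longrightarrow (A/\ker(\gamma),\circ) \longrightarrow 1,
\end{equation*}
which are well-defined because $\ker(\gamma)$ is an ideal and so is normal in both $(A,\cdot)$ and $(A,\circ)$. Solubility of a group is inherited by subgroups and quotients and is closed under extensions, so $(A,\cdot)$ is soluble iff both $\ker(\gamma)$ and $(A/\ker(\gamma),\cdot)$ are soluble, and similarly for $(A,\circ)$. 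By the antiisomorphism above, the two middle conditions agree, and the two kernels are the same group. This gives the first equivalence.

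For the moreover part, assume $(A,\cdot)$ (equivalently $(A,\circ)$) is soluble. Then the quotient group $(A/\ker(\gamma),\circ)$ is soluble, and since this group is isomorphic to $\im(\gamma)$, we conclude by Proposition~\ref{prop: A soluble if im gamma is soluble} that $A$ is soluble as a skew brace. I do not expect a real obstacle: the only delicate point is remembering that ``trivial as a skew brace'' forces the two operations on $\ker(\gamma)$ to coincide, and that ``almost trivial'' converts one multiplication into the opposite of the other, so solubility of the underlying groups transfers cleanly between the two.
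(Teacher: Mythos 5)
Your proof is correct and follows essentially the same strategy as the paper's: the paper decomposes $(A,\cdot)$ and $(A,\circ)$ along the ideal $A_\op^2$ (which is trivial as a subskew brace and has almost trivial quotient) rather than along $\ker(\gamma)$, but since $A_\op^2\subseteq\ker(\gamma)$ for a bi-skew brace, the two ideals play identical roles and the extension argument for group solubility is the same. The ``moreover'' part is handled exactly as in the paper, via $\im(\gamma)$ being a soluble quotient of $(A,\circ)$ and Proposition~\ref{prop: A soluble if im gamma is soluble}.
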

\begin{proof}
    Assume that $(A,\cdot)$ is soluble. As $A/A_\op^2$ is an almost trivial skew brace, $(A/A_\op^2,\cdot)\cong (A/A_\op^2,\circ)$, and in particular it follows from the assumption that both are soluble. Since $A^2_\op$ is a trivial skew brace, clearly $(A^2_\op,\cdot)\cong (A^2_\op,\circ)$. In particular, it once again follows from the assumption that both groups are soluble. We conclude that $(A/A_\op^2,\circ)$ and $(A_\op^2,\circ)$ are soluble, so $(A,\circ)$ is also soluble.
    
    The exact same argument proves the other implication. 
    
    To prove that in this case $A$ is also soluble as a skew brace, it suffices to note that $\gamma(A)$ is a soluble group as it is a quotient of $(A,\circ)$. The solubility of $A$ then follows by Proposition~\ref{prop: A soluble if im gamma is soluble}. 
\end{proof}
\begin{remark}
    The same idea can be used to prove Byott's conjecture for skew braces with a composition series where the factors are trivial or almost trivial skew braces, so in particular for soluble skew braces. This means that Byott's conjecture holds for several classes of skew braces studied in recent years, but as there exist finite simple skew braces which are neither trivial nor almost trivial~\cite{BCJO19}, this does not provide a general way to prove the conjecture.
\end{remark}

\subsection{Bi-skew braces and $\gamma$-homomorphic skew braces}
Recall from Theorem~\ref{theorem: characterizations bi caranti} that the gamma function of a bi-skew brace is an antihomomorphism with respect to the additive group. It is therefore to be expected that there is a connection with $\gamma$-homomorphic skew braces. Indeed, as we noted before these two notions coincide when $\gamma(A)$ is abelian. In the remainder of this section we further discuss some similarities.

We start with a theorem for $\gamma$-homomorphic skew braces similar to Theorem~\ref{theorem: characterizations bi caranti} and Remark~\ref{rem: extra condition bi}.

\begin{theorem}\label{thm:equivalentgamma}
Let $A$ be a skew brace. Then the following are equivalent:
\begin{enumerate}
    \item $A$ is $\gamma$-homomorphic.
    \item $A^2$ is contained in $\ker(\gamma)$.
    \item $A$ is right nilpotent of class at most $2$.
\end{enumerate}
\end{theorem}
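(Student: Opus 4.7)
The plan is to prove $(2) \Leftrightarrow (3)$ essentially by unwinding the definition of right nilpotency, then close the loop by proving $(1) \Rightarrow (2)$ and $(2) \Rightarrow (1)$. The structural fact I will lean on for the last step is that $A^2$ is an ideal with $A/A^2$ a trivial skew brace, as recalled in the preliminaries.

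First I would observe that $A^{(3)} = A^2 * A$ is by definition the subgroup of $(A, \cdot)$ generated by $\{x * a : x \in A^2,\ a \in A\}$, and $x * a = 1$ exactly when $\lexp{a}{\gamma(x)} = a$. Hence $A^{(3)} = \{1\}$ precisely when $\gamma(x) = \id$ for every $x \in A^2$, i.e., when $A^2 \subseteq \ker(\gamma)$. This gives $(2) \Leftrightarrow (3)$.

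For $(1) \Rightarrow (2)$, the hypothesis that $\gamma \colon (A, \cdot) \to \Aut(A, \cdot)$ is a group homomorphism makes $\ker(\gamma)$ a subgroup of $(A, \cdot)$, so it suffices to check that each generator $a * b$ of $A^2$ lies in $\ker(\gamma)$. Combining the always-valid identity $\gamma(a \circ b) = \gamma(a) \gamma(b)$ with the assumed multiplicativity on $(A, \cdot)$ gives the short computation $\gamma(a * b) = \gamma(a^{-1}) \gamma(a \circ b) \gamma(b^{-1}) = \gamma(a)^{-1} \gamma(a) \gamma(b) \gamma(b)^{-1} = \id$, as needed.

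The subtle direction is $(2) \Rightarrow (1)$, and this is where I expect the main work. Since $A^2$ is an ideal contained in $\ker(\gamma)$, the $(A, \circ)$-homomorphism $\gamma$ factors as $\gamma = \bar{\gamma} \circ \pi$, where $\pi \colon (A, \circ) \to (A/A^2, \circ)$ is the quotient map and $\bar{\gamma} \colon (A/A^2, \circ) \to \Aut(A, \cdot)$ is a group homomorphism. Because $A/A^2$ is a trivial skew brace its two operations coincide, so $\bar{\gamma}$ is equally a group homomorphism from $(A/A^2, \cdot)$. Composing with the $\cdot$-homomorphism $\pi \colon (A, \cdot) \to (A/A^2, \cdot)$ expresses $\gamma$ as a group homomorphism from $(A, \cdot)$, establishing $(1)$. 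The crucial point is that $\pi$ is the same underlying set map on both sides, so the triviality of the skew brace structure on $A/A^2$ is exactly what bridges the $\circ$-side factorisation to the $\cdot$-side conclusion.
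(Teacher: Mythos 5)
Your proposal is correct and follows essentially the same route as the paper: the $(1)\Rightarrow(2)$ computation $\gamma(a*b)=\gamma(a)^{-1}\gamma(a)\gamma(b)\gamma(b)^{-1}=\id$ is identical, and your factorisation of $\gamma$ through the trivial quotient $A/A^2$ is just a repackaging of the paper's observation that $(a\cdot b)\circ A^2=(a\circ b)\circ A^2$ forces $\gamma(a\cdot b)=\gamma(a\circ b)=\gamma(a)\gamma(b)$. Your unwinding of $(2)\Leftrightarrow(3)$ via $x*a=1\iff\lexp{a}{\gamma(x)}=a$ is exactly the step the paper dismisses as clear.
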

\begin{proof}
Assume that $A$ is $\gamma$-homomorphic. Then for all $a,b\in A$,
\begin{equation*}
    \gamma(a*b)=\gamma(a)^{-1}\gamma(a)\gamma(b)\gamma(b)^{-1}=1.
\end{equation*}
It follows that $A^2\subseteq \ker(\gamma)$.
Conversely, assume that $\ker(\gamma)$ contains $A^2$. As $A/A^2$ is a trivial skew brace it follows for all $a,b\in A$ that $(a\cdot b)\circ A^2=(a\circ b)\circ A^2$. The assumption then implies \[\gamma(a\cdot b)=\gamma(a\circ b)=\gamma(a)\gamma(b).\]

The equivalence of the second and third condition is clear.
\end{proof}

Recall that a skew brace $A$ is \emph{metatrivial} if it is soluble of class at most 2, which by definition means that $A^2$ is a trivial skew brace. As a consequence of Theorem~\ref{thm:equivalentgamma}, we find a short proof of~\cite[Theorem 2.12]{BNY22}, as follows.
\begin{corollary}
Every $\gamma$-homomorphic skew brace is metatrivial.
\end{corollary}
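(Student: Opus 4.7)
The plan is to deduce the corollary directly from condition (2) of Theorem~\ref{thm:equivalentgamma}, namely that $A^2 \subseteq \ker(\gamma)$, and then translate this into the statement that $A^2$ is a trivial skew brace (which is precisely the alternative definition of meta-trivial given just above the corollary).

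First, I would unwind the definition of meta-trivial as spelled out in the paragraph preceding the statement: $A$ is meta-trivial exactly when $A^2$ is a trivial skew brace. So the whole task reduces to showing that on $A^2$, the two operations $\cdot$ and $\circ$ coincide.

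Next, I would use Theorem~\ref{thm:equivalentgamma} to obtain $A^2 \subseteq \ker(\gamma)$. Since $A^2$ is a left ideal of $A$, it is in particular closed under both operations and carries an induced skew brace structure. For any $a \in A^2$ we have $\gamma(a) = \mathrm{id}$, so $\lexp{b}{\gamma(a)} = b$ for every $b \in A$, and then the defining relation~\eqref{eq: circ wrt gamma} gives
\begin{equation*}
    a \circ b = a \cdot \lexp{b}{\gamma(a)} = a \cdot b.
\end{equation*}
Restricting $b$ to $A^2$ shows the two operations agree on $A^2$, hence $A^2$ is a trivial skew brace, proving $A$ is meta-trivial.

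There is really no obstacle here; the only thing to watch is making sure the notion of "trivial skew brace" being used is the one where $a \cdot b = a \circ b$ (as in $\Triv(A,\circ)$ from the preliminaries), which is indeed what is meant in the meta-trivial characterisation cited. As an optional sanity check one can phrase the conclusion through the derived series instead: $A_2 = A \ast A = A^2 \subseteq \ker(\gamma)$ forces $A_3 = A_2 \ast A_2 = \{1\}$ since $a \ast b = \lexp{b}{\gamma(a)} \cdot b^{-1} = 1$ whenever $a \in \ker(\gamma)$, confirming that the solubility class is at most $2$.
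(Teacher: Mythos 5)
Your proof is correct and essentially matches the paper's: the paper invokes condition (3) of Theorem~\ref{thm:equivalentgamma} to get $A^{2}*A^{2}\subseteq A^{2}*A=A^{(3)}=\{1\}$, which is exactly the computation you include as your ``sanity check,'' while your main argument uses the equivalent condition $A^{2}\subseteq\ker(\gamma)$ and unwinds it to show $\cdot$ and $\circ$ agree on $A^{2}$. Both are the same idea phrased through different (explicitly equivalent) conditions of the theorem, so there is nothing to add.
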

\begin{proof}
Let $A$ be a $\gamma$-homomorphic skew brace. Then by Theorem~\ref{thm:equivalentgamma}, $A^{(3)}=\{1\}$, and this clearly implies that $A^2*A^2\subseteq A^2*A=\{1\}$.
\end{proof}
Note that the converse does not hold. 
\begin{example}
Let $A=\opTriv(\Sym_3)$, with $\Sym_3$ the symmetric group on 3 elements. Then $A$ is metatrivial because $\Sym_3$ is metabelian. As $\Sym_3$ is not nilpotent, it follows that $A$ is not right nilpotent and in particular not $\gamma$-homomorphic.
\end{example}

\begin{example}
	Recall that if $A$ is a Jacobson radical ring, then $A$ yields a two-sided brace~\cite{Rum07a}.
	By Theorem~\ref{theorem: characterizations bi caranti} (or equivalently, Theorem~\ref{thm:equivalentgamma}), we find that the corresponding brace is a bi-skew brace (or equivalently a $\gamma$-homomorphic skew brace) if and only if $A^{(3)}=A^3=\{1\}$, as already shown in~\cite[Proposition 4.1]{Chi19} when $A$ is finite or nilpotent.
\end{example}

We conclude the section by showing that one can use the semidirect product of skew braces to obtain two slightly different constructions, one yielding $\gamma$-homomorphic skew braces and one yielding bi-skew braces. For bi-skew braces it turns out that we find a different construction for examples which were already described by Childs.

We recall first the semidirect product of skew braces, as developed in~\cite[Corollary 2.36]{SV18}. If $A$ and $B$ are skew braces, an \emph{action} of $A$ on $B$ is a group homomorphism $\alpha\colon (A,\circ)\to \Aut(B,\cdot,\circ)$. If we have such an $\alpha$, whose action is written as a left exponent, then the set $A\times B$, with operations
\begin{align*}
	(a,b)\cdot(a',b')&=(a\cdot a',b\cdot b'),\\
	(a,b)\circ(a',b')&=(a\circ a',b\circ\lexpp{b'}{\alpha(a)}),
\end{align*}
is a skew brace, which we denote by $A\ltimes B$. 

\begin{example}\label{example: semidirect gamma}
	Let $G$ and $H$ be groups, and let $G$ act by automorphisms on $H$, with the action denoted by $\alpha$. This induces an action of the trivial skew brace $A=\Triv(G)$ on the trivial skew brace $B=\Triv(H)$. By the semidirect product construction, we find a skew brace $A\ltimes B$. Explicitly, 
	\begin{align*}
	(g,h)\cdot (g',h')&=(gg',hh'),\\
	(g,h)\circ(g',h')&=(gg',h\lexpp{h'}{\alpha(g)}).
    \end{align*}
We have recovered in this way~\cite[Example 1.4]{GV17}. Note that here the gamma function is $\gamma(g,h)=(\id,\alpha(g))$. In particular, $A\ltimes B$ is a $\gamma$-homomorphic skew brace, and it is a bi-skew brace if and only if $[G,G]\subseteq \ker(\alpha)$, as an immediate computation shows. 
\end{example}

\begin{example}\label{example: semidirect bi}
	Let $G$ and $H$ be groups, and let $G$ act by automorphisms on $H$, with the action denoted by $\alpha$. This induces an action of the almost trivial skew brace $A=\opTriv(G)$ on the trivial skew brace $B=\Triv(H)$. By the semidirect product construction, we find a skew brace $A\ltimes B$. Explicitly, 
	\begin{align*}
	(g,h)\cdot (g',h')&=(g'g,hh'),\\
	(g,h)\circ(g',h')&=(gg',h\lexpp{h'}{\alpha(g)}).
\end{align*}
The skew brace $A\ltimes B$ has gamma function $\gamma(g,h)=(\phi(g),\alpha(g))$, where $\phi(g)$ denotes conjugation by $g$. This immediately implies that $A\ltimes B$ is a bi-skew brace, already obtained in~\cite[Proposition 7.1]{Chi19} from the semidirect product of the groups $G$ and $H$. Moreover it is $\gamma$-homomorphic if and only if $[G,G]\subseteq \ker(\alpha)\cap Z(G)$. Note that when $G$ is abelian, this construction coincides with the one in Example~\ref{example: semidirect gamma}.
\end{example}

\section{Some classification results}\label{sec: classification}

We begin this section by showing that all the braces with multiplicative group isomorphic to $\Z^2$ are in fact bi-skew braces. We need the following result.
\begin{theorem}\label{theorem: Amberg}
Let $(A,\cdot,\circ)$ be a two-sided brace such that $(A,\circ)$ is finitely generated abelian of rank $n$. Then $(A,\cdot)$ is finitely generated abelian of rank $n$.
\end{theorem}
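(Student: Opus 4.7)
The plan is to translate everything into the language of Jacobson radical rings. By the classical equivalence (Rump), a two-sided brace $(A,\cdot,\circ)$ corresponds to a (non-unital) Jacobson radical ring $R$ with $R^+=(A,\cdot)$, ring multiplication $ab:=a*b$, and adjoint operation $a\circ b=a+b+ab$. The abelianity of $(A,\circ)$ is equivalent, via $a\circ b=b\circ a$, to commutativity of $R$. Hereafter I work inside this commutative radical ring.

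The heart of the proof, and the main obstacle, is to show that $R$ is nilpotent as a ring, that is, $R^{c+1}=0$ for some $c$, where $R^{k+1}=R\cdot R^k$. Each $R^k$ is an ideal of $R$, and from the identity $\overline a=-a-a\overline a$ (coming from $a\circ\overline a=0$) one verifies inductively that $R^k$ is also closed under $\circ$-inverses. Hence $(R^k,\circ)$ is a subgroup of the finitely generated abelian group $(A,\circ)\cong\Z^n\oplus T$, and the torsion-free ranks of this descending chain stabilize. Deducing that the chain actually terminates at $\{0\}$ is the content (in the commutative case) of the theorem of Amberg and Sysak on adjoint groups of radical rings; morally, commutativity plus finite generation of the adjoint group supplies the rigidity needed for a Nakayama-style argument.

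Once $R$ is known to be nilpotent we have a finite filtration $R=R^1\supseteq R^2\supseteq\cdots\supseteq R^{c+1}=0$ by ideals. On each quotient $R^k/R^{k+1}$ the induced ring multiplication is zero (since $R\cdot R^k\subseteq R^{k+1}$), so the two operations $+$ and $\circ$ coincide on the quotient. Thus $(R^k/R^{k+1},+)$ and $(R^k/R^{k+1},\circ)$ are literally the \emph{same} abelian group, and it is a subquotient of the finitely generated abelian group $(A,\circ)$, hence itself finitely generated abelian of some rank $n_k$.

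Finally, finitely generated abelian groups are closed under extensions, so $R^+=(A,\cdot)$ is finitely generated abelian; and torsion-free rank is additive in short exact sequences, so climbing up the filtration yields
\[
\mathrm{rank}(A,\cdot)=\sum_{k=1}^{c}\mathrm{rank}(R^k/R^{k+1},+)=\sum_{k=1}^{c}\mathrm{rank}(R^k/R^{k+1},\circ)=\mathrm{rank}(A,\circ)=n.
\]
All of this is clean bookkeeping once the nilpotency step is granted; the one delicate input is the ring-theoretic result invoked in the second paragraph.
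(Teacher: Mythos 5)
Your route is genuinely different from the paper's, which disposes of the theorem in two citations: \cite[Theorem 3]{Wat68} for finite generation of $(A,\cdot)$ and \cite[Theorem B]{AD95} for equality of ranks. You instead aim to prove that the associated commutative radical ring $R$ is nilpotent and then deduce both conclusions at once from the filtration $R\supseteq R^2\supseteq\cdots\supseteq R^{c+1}=0$. The bookkeeping half of your argument is correct: each $R^k$ is an ideal closed under $\circ$-inversion, the two operations coincide on $R^k/R^{k+1}$ because $R^k\cdot R^k\subseteq R\cdot R^k= R^{k+1}$, subquotients of $(A,\circ)$ are finitely generated abelian, and torsion-free rank is additive along both filtrations. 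What this buys is a self-contained derivation of the rank equality, replacing the appeal to \cite{AD95}. The cost is that the entire weight now rests on the nilpotency of $R$, which you do not actually establish: the appeal to ``the theorem of Amberg and Sysak\dots in the commutative case'' together with a ``morally\dots Nakayama-style argument'' is a gesture rather than a proof, and the stabilisation of torsion-free ranks along the chain $(R^k)_k$, which you do prove, does not by itself force the chain to terminate at $0$. The statement you need is true and is in the literature---it is essentially the full form of Watters' theorem (a radical ring whose adjoint group is finitely generated and nilpotent is a nilpotent ring with finitely generated additive group), i.e.\ the very source the paper cites one direction of---so your proof is completed by replacing the vague attribution with that precise citation. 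Note also that the paper only obtains nilpotency statements \emph{downstream} of this theorem (Proposition~\ref{prop: mult group Zn nilpotent}), so you are front-loading the hardest input, and as written that input remains a black box.
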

\begin{proof}
By~\cite[Theorem 3]{Wat68}, if $(A,\circ)$ is finitely generated abelian, then $(A,\cdot)$ is finitely generated. By~\cite[Theorem B]{AD95}, the ranks of $(A,\circ)$ and $(A,\cdot)$ coincide.
\end{proof}

\begin{proposition}\label{prop: mult group Zn nilpotent}
Let $A$ be a brace with multiplicative group isomorphic to $\Z^n$. Then $A$ is right nilpotent of class at most $n$.
\end{proposition}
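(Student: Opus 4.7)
The plan is to proceed by induction on $n$. For $n = 0$, $A = \{1\}$ is vacuously right nilpotent of class $0$.

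For the inductive step, let $A$ be a brace with $(A, \circ) \cong \Z^n$. I first observe that, since $(A, \circ)$ is abelian, the resulting identity $(\gamma(a) - \id)(b) = (\gamma(b) - \id)(a)$ combined with the $\Z$-linearity of each $\gamma(c)$ on $(A, \cdot)$ forces $a \mapsto \gamma(a) - \id$ to be additive; equivalently, $A$ is a two-sided brace. Theorem~\ref{theorem: Amberg} then applies and gives that $(A, \cdot)$ is finitely generated abelian of rank $n$. Consequently, $\im(\gamma)$ is an abelian subgroup of $\Aut(A, \cdot)$, which up to the finite torsion of $(A, \cdot)$ sits inside $\GL_n(\Z)$. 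Invoking the classical fact (arising from Dirichlet's unit theorem) that every abelian subgroup of $\GL_n(\Z)$ has torsion-free rank at most $n - 1$, the ideal $\Soc(A) = \ker(\gamma)$ has torsion-free rank at least $1$ inside $(A, \circ) \cong \Z^n$, and in particular is nontrivial.

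The key technical step is then to construct an ideal $I$ of $A$ with $I \subseteq \Soc(A)$ and with $(A/I, \circ)$ free abelian of rank $m < n$. A natural candidate is a $\gamma(A)$-invariant direct summand of $(A, \circ)$ lying inside $\Soc(A)$; its existence should follow from the positive torsion-free rank of $\Soc(A)$ together with the structured action of the abelian group $\gamma(A)$ on it. Given such an $I$, the inductive hypothesis applied to $A/I$ yields $(A/I)^{(n)} = \{1\}$, which lifts to $A^{(n)} \subseteq I \subseteq \ker(\gamma)$, whence $A^{(n+1)} = A^{(n)} * A \subseteq \ker(\gamma) * A = \{1\}$, completing the induction.

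The main obstacle will be verifying the existence of $I$ with all the listed properties simultaneously: being a subgroup of $(A, \cdot)$, a direct summand of $(A, \circ)$, $\gamma(A)$-invariant, and contained in $\Soc(A)$. If a direct construction proves delicate, particularly when $\im(\gamma)$ carries torsion so that $\Soc(A)$ is not saturated in $(A, \circ)$, a backup strategy is to strengthen the inductive hypothesis so that it applies to finitely generated abelian multiplicative groups with controlled torsion, giving more flexibility in choosing the quotient ideal.
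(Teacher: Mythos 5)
Your opening reduction (a brace with abelian multiplicative group is two-sided, so Theorem~\ref{theorem: Amberg} applies and $(A,\cdot)$ is free abelian of rank $n$) agrees with the paper, but the inductive step has two genuine gaps. The ``classical fact'' you invoke is false: abelian subgroups of $\GL_n(\Z)$ can have torsion-free rank well above $n-1$. Already for $n=4$ the unipotent subgroup $\left\{\left(\begin{smallmatrix} I_2 & B\\ 0 & I_2\end{smallmatrix}\right) \mid B\in \Mat_2(\Z)\right\}$ is free abelian of rank $4$, and in general one attains rank $\lfloor n^2/4\rfloor$; Dirichlet's unit theorem only controls \emph{semisimple} abelian subgroups. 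So your argument that $\ker(\gamma)=\Soc(A)$ is nontrivial does not go through for $n\ge 4$, and the induction loses its engine. Second, even granting $\Soc(A)\neq\{1\}$, your ``key technical step'' is only conjectured, and the obstruction you flag yourself is real: $\Soc(A)$ need not be saturated in $(A,\circ)$, so $A/\Soc(A)$ may have torsion, and there is no evident $\gamma(A)$-invariant direct summand of $(A,\circ)$ lying inside $\Soc(A)$. Without an actual construction of the ideal $I$, the proof is incomplete.

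The paper sidesteps all of this by reducing modulo primes rather than cutting down the rank: for each prime $p$ it takes the ideal $I_p$ of $(A,\cdot)\cong\Z^n$ generated by $p$-th powers, notes that $A/I_p$ is a two-sided brace of order $p^n$ and hence left nilpotent of class at most $n$ by Rump's result, which for two-sided braces gives right nilpotency of the same class, so $A^{(n+1)}\subseteq I_p$; intersecting over all primes yields $A^{(n+1)}=\{1\}$. No structure theory of $\GL_n(\Z)$ and no induction on $n$ is needed. If you want to salvage your route, you would need a correct substitute for both the nonvanishing of the socle and the existence of the quotient ideal, which seems harder than the direct argument.
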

\begin{proof}
By Theorem~\ref{theorem: Amberg}, $(A,\cdot)$ is finitely generated of rank $n$. Let $T$ be the (necessarily finite) torsion subgroup of $(A,\cdot)$. As $T$ is a characteristic subgroup of $(A,\cdot)$, it is a left ideal of $A$, so also a finite subgroup of $(A,\circ)$, and thus $T$ is trivial. It follows that $(A,\cdot)\cong \Z^n$. 

Now for a prime $p$, let $I_p\cong (p\Z)^n$ be the characteristic subgroup of $(A,\cdot)$ generated by all $p$-powers of elements. Then $A/I_p$ has order $p^n$, and therefore it is left nilpotent of class at most $n$ by~\cite[Corollary of Proposition 8]{Rum07a}. As $A/I_p$ is a two-sided brace it is also right nilpotent of class at most $n$, or equivalently, $A^{(n+1)}\subseteq I_p$. We conclude that
\begin{equation*}
	A^{(n+1)}\subseteq \bigcap_{\text{$p$ prime}}I_p=\{1\}. \qedhere
\end{equation*}
\end{proof}

We immediately recover~\cite[Theorem 5.5]{CSV19}, and we find the result we have claimed.

\begin{corollary}\label{cor: Triv(Z) is only abelian brace with multiplicative group Z}
Let $A$ be a brace with multiplicative group isomorphic to $\Z$. Then $A$ is a trivial skew brace.
\end{corollary}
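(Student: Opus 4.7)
The plan is to derive this as an immediate specialisation of Proposition~\ref{prop: mult group Zn nilpotent} together with an unpacking of what right nilpotency of class at most $1$ means for the gamma function.

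First, I would invoke Proposition~\ref{prop: mult group Zn nilpotent} with $n=1$: any brace whose multiplicative group is $\Z$ is right nilpotent of class at most $1$. By definition, this means $A^{(2)} = A \ast A = \{1\}$, so $a \ast b = 1$ for all $a,b \in A$.

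Next, I would translate this into a statement about $\gamma$. Since $a \ast b = \lexp{b}{\gamma(a)} \cdot b^{-1}$, the equality $a \ast b = 1$ for all $a,b$ forces $\lexp{b}{\gamma(a)} = b$ for every $a,b \in A$, i.e.\ $\gamma(a) = \id$ for all $a$. From the formula $a \circ b = a \cdot \lexp{b}{\gamma(a)}$ in~\eqref{eq: circ wrt gamma}, this yields $a \circ b = a \cdot b$, so the two operations coincide and $A = \Triv(A,\cdot)$. Finally, Theorem~\ref{theorem: Amberg} applied with $n = 1$ ensures $(A,\cdot) \cong \Z$, so $A = \Triv(\Z)$ as claimed.

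There is no real obstacle here: the work has been done in Proposition~\ref{prop: mult group Zn nilpotent}, and the corollary is essentially the translation of ``right nilpotency class $\le 1$'' into ``trivial skew brace''. The only small care needed is to observe that the additive group is also $\Z$ rather than some other rank-$1$ abelian group, which is guaranteed by Theorem~\ref{theorem: Amberg} (and indeed was already established inside the proof of Proposition~\ref{prop: mult group Zn nilpotent}).
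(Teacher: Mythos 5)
Your proof is correct and follows exactly the route the paper intends: the corollary is stated as an immediate consequence of Proposition~\ref{prop: mult group Zn nilpotent} with $n=1$, and your unpacking of ``right nilpotency class at most $1$'' into $a*b=1$, hence $\gamma\equiv\id$, hence $\circ=\cdot$, is the intended argument. (The appeal to Theorem~\ref{theorem: Amberg} at the end is superfluous, since once the operations coincide the additive group is literally the multiplicative group, but it does no harm.)
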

\begin{corollary}
Let $A$ be a brace with multiplicative group isomorphic to $\Z^2$. Then $A$ is a bi-skew brace.
\end{corollary}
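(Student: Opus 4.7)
The plan is to combine Proposition~\ref{prop: mult group Zn nilpotent} with the characterisation of bi-skew braces from Remark~\ref{rem: extra condition bi}. By that proposition, with $n=2$, the brace $A$ is right nilpotent of class at most $2$, so $A^{(3)} = A^2 * A = \{1\}$.

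The key observation I would make next is that for a brace, $(A,\cdot)$ is abelian, so from the explicit formulas
\begin{equation*}
a * b = a^{-1}\cdot (a\circ b)\cdot b^{-1}, \qquad a *_\op b = b^{-1}\cdot (a\circ b)\cdot a^{-1},
\end{equation*}
we immediately get $a * b = a *_\op b$ for all $a,b \in A$. In particular $A^2 = A^2_\op$, and therefore $A^2_\op * A = A^2 * A = \{1\}$.

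By Remark~\ref{rem: extra condition bi}, this is precisely the extra condition equivalent to the ones in Theorem~\ref{theorem: characterizations bi caranti}, so $A$ is a bi-skew brace. There is no real obstacle here: everything follows by stringing the earlier results together, and the only tiny subtlety is the remark that commutativity of $(A,\cdot)$ collapses $*$ and $*_\op$, which turns the right nilpotency provided by Proposition~\ref{prop: mult group Zn nilpotent} into the bi-skew brace condition.
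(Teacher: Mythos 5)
Your proof is correct and follows essentially the same route as the paper: Proposition~\ref{prop: mult group Zn nilpotent} with $n=2$ gives $A^{(3)}=A^2*A=\{1\}$, and the observation that commutativity of $(A,\cdot)$ forces $A=A_\op$ (hence $*=*_\op$) reduces the bi-skew brace criterion of Theorem~\ref{theorem: characterizations bi caranti}/Remark~\ref{rem: extra condition bi} to exactly this condition. The paper's proof is the same argument stated more tersely.
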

\begin{proof}
It follows from Proposition~\ref{prop: mult group Zn nilpotent} that $A$ is right nilpotent of degree at most $2$. The statement then follows from Theorem~\ref{theorem: characterizations bi caranti} and the fact that $A_\op=A$.
\end{proof}

Motivated by Corollary~\ref{cor: Triv(Z) is only abelian brace with multiplicative group Z}, we now want to classify all the skew braces with multiplicative group isomorphic to $\Z$, as asked in~\cite[Problem 2.27]{Ven19}. Let $(A,\cdot)$ be an infinite cyclic group, with generator $x$. We can define the following operation on $A$:
\begin{equation*}
	x^i\circ x^j=x^{i+(-1)^ij}.
\end{equation*}
Then, as shown in the proof of~\cite[Proposition 6]{Rum07b}, the operation $\circ$ is the unique one such that $(A,\cdot,\circ)$ is a nontrivial brace, and $(A,\circ)$ is isomorphic to the infinite dihedral group
\begin{equation*}
	\langle x,y\mid y^2=1,yxy=x^{-1}\rangle\cong C_2\ltimes \Z. 
\end{equation*}

We can easily see that $(A,\cdot,\circ)$ is a bi-skew brace. Moreover, there are just two group automorphisms of $(A,\circ)$, namely the identity and the inversion, and it is easily verified that both are also automorphisms of the skew brace $(A,\cdot, \circ)$ and therefore also of $(A,\circ,\cdot)$.

We claim that $(A,\circ,\cdot)$ is not isomorphic to its opposite skew brace. They are clearly not equal, as $(A,\circ)$ is not abelian. Therefore, the only candidate for an isomorphism is given by the inversion automorphism of $(A,\cdot)$ which also induces an automorphism of $(A,\circ)$. If this yields an isomorphism of skew braces, then it would be both an automorphism and antiautomorphism of $(A,\circ)$ which implies that $(A,\circ)$ is abelian. As $(A,\circ)$ is isomorphic to the infinite dihedral group, this is a contradiction.

In the remainder of this section we prove that the two skew braces with infinite cyclic multiplicative group above are in fact the only nontrivial ones.

\begin{lemma}\label{lem: abelian mult group X*Y=Y*opX}
Let $B$ be a skew brace with an abelian multiplicative group. Then for all $X,Y\sub B$, the equality $X*Y=Y*_\op X$ holds.
\end{lemma}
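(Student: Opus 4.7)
The plan is to prove the element-wise identity $a * b = b *_\op a$ for all $a,b \in B$, and then deduce the statement about generating subgroups.

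First I would write down the two key formulas from the preliminaries: by definition, $a * b = a^{-1} \cdot (a \circ b) \cdot b^{-1}$, and the displayed formula in the preliminaries gives $a *_\op b = b^{-1} \cdot (a \circ b) \cdot a^{-1}$. Swapping the roles of $a$ and $b$ in the latter, $b *_\op a = a^{-1} \cdot (b \circ a) \cdot b^{-1}$. Now I would invoke the hypothesis that $(B, \circ)$ is abelian to replace $b \circ a$ by $a \circ b$ in this expression, yielding $b *_\op a = a^{-1} \cdot (a \circ b) \cdot b^{-1} = a * b$.

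Once this element-wise equality is established, the rest is essentially bookkeeping. The subgroup $X * Y$ of $(B, \cdot)$ is by definition generated by $\{x * y : x \in X, y \in Y\}$, while $Y *_\op X$ is the subgroup of $(B, \cdot_\op)$ generated by $\{y *_\op x : y \in Y, x \in X\}$. The former generating set equals the latter, and subgroups of $(B, \cdot)$ are the same subsets as subgroups of $(B, \cdot_\op)$ (a subset closed under $\cdot$ and inversion is automatically closed under $\cdot_\op$), so the two generated subgroups coincide.

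There is no real obstacle here; the only subtlety is being careful with the definition of $*_\op$ and with the fact that a subgroup of $(B, \cdot)$ is the same thing as a subgroup of $(B, \cdot_\op)$, so we are genuinely comparing equal subsets of $B$.
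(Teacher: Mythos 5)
Your proof is correct and follows the same route as the paper: the whole content is the element-wise identity $a*b=a^{-1}\cdot(a\circ b)\cdot b^{-1}=a^{-1}\cdot(b\circ a)\cdot b^{-1}=b*_\op a$, which is exactly the paper's one-line computation. The extra remarks about subgroups of $(B,\cdot)$ and $(B,\cdot_\op)$ coinciding are correct bookkeeping that the paper leaves implicit.
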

\begin{proof}
It suffices to note that for all $a,b\in B$,
\begin{equation*}
	a* b=a^{-1}\cdot (a\circ b)\cdot b^{-1}=a^{-1}\cdot (b\circ a)\cdot b^{-1}=b*_\op a.\qedhere
\end{equation*}
\end{proof}
\begin{theorem}\label{theorem: classification skew brace Z mult group}
Let $(A,\circ)=\{x^{\circ i}\mid i\in \Z\}$ be an infinite cyclic group. If $A=(A,\cdot,\circ)$ is a skew brace, then the additive operation is given by one of the following equalities:
\begin{align}
    x^{\circ i}\cdot x^{\circ j}&=x^{\circ (i+ j)},\label{eq: mult1}\\
    x^{\circ i}\cdot x^{\circ j}&=	x^{\circ (i+(-1)^ij)}\label{eq: mult2},\\
    x^{\circ i}\cdot x^{\circ j}&=	x^{\circ(j+(-1)^ji)}.\label{eq: mult3}
\end{align}
\end{theorem}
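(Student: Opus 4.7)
Set $\sigma = \gamma(x) \in \Aut(A,\cdot)$. Since $(A,\circ) = \langle x \rangle$ is infinite cyclic and $\gamma$ is a homomorphism from $(A,\circ)$, we have $\gamma(x^{\circ i}) = \sigma^i$ for all $i \in \Z$, so the skew brace structure is entirely encoded by $(A,\cdot)$ together with $\sigma$. Transporting $\cdot$ to a binary operation $\star$ on $\Z$ via the bijection $i \leftrightarrow x^{\circ i}$ and using $a \circ b = a \cdot \lexp{b}{\gamma(a)}$, one computes $i \star j = i + \phi^{-i}(j)$, where $\phi \colon \Z \to \Z$ is the bijection given by $\sigma(x^{\circ j}) = x^{\circ \phi(j)}$ (and $\phi(0) = 0$). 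If $\sigma = \id$, then $\phi = \id$, $\cdot = \circ$, and we are in case~\eqref{eq: mult1}; hereafter assume $\sigma \neq \id$.

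The plan is to split according to whether $A$ or $A_{\op}$ is a bi-skew brace. Because $(A,\circ)$ is abelian, Lemma~\ref{lem: abelian mult group X*Y=Y*opX} gives $A^2 = A_{\op}^2$, and then by Theorem~\ref{theorem: characterizations bi caranti} the dichotomy becomes $A^2 \subseteq \ker(\gamma)$ versus $A^2 \subseteq \ker(\gamma_{\op})$. Suppose first that $A$ is a bi-skew brace. Then $\im(\gamma)$, being a quotient of the cyclic group $(A,\circ)$, is cyclic and hence abelian, so Lemma~\ref{lemma: two implies three} additionally implies that $A$ is $\gamma$-homomorphic. Theorem~\ref{thm:equivalentgamma} then yields $A^{(3)} = \{1\}$, so $A^2 \subseteq \ker(\gamma)$ is a trivial sub-skew-brace with $(A^2,\circ)$ a subgroup of $\Z$, while $A/A^2$ is a trivial skew brace on a cyclic quotient of $\Z$. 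Since $\sigma \neq \id$ forces $A^2 \neq \{1\}$, a short analysis of the extension $1 \to A^2 \to (A,\cdot) \to A/A^2 \to 1$ (ruling out $(A,\cdot)$ abelian, which by Corollary~\ref{cor: Triv(Z) is only abelian brace with multiplicative group Z} would force $A$ trivial, and pinning the quotient at order $2$) leaves $(A,\cdot) \cong D_\infty$. The flip $A_{\leftrightarrow} = (A,\circ,\cdot)$ is then a brace with multiplicative group $D_\infty$ and, by the uniqueness statement from~\cite[Proposition 6]{Rum07b} recalled just above Theorem~\ref{theorem: classification skew brace Z mult group}, is isomorphic to the nontrivial brace displayed there; reading off $\cdot$ gives exactly~\eqref{eq: mult2}.

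If $A$ is not a bi-skew brace, the dichotomy forces $A_{\op}$ to be one, and since $\sigma \neq \id$ the brace $A_{\op}$ is also non-trivial; the previous paragraph applied to $A_{\op}$ then shows that its additive operation is~\eqref{eq: mult2}. The operation $\cdot$ on $A$ is the reverse of this, namely $x^{\circ i} \cdot x^{\circ j} = x^{\circ j} \cdot_{\op} x^{\circ i} = x^{\circ(j + (-1)^j i)}$, which is~\eqref{eq: mult3}. The real work lies in the dichotomy itself: although in each of the three target operations one sees directly that one of $A, A_{\op}$ is a bi-skew brace, proving this \emph{a priori} is the main obstacle and would presumably go by extracting enough constraints on $\phi$ from associativity of $\star$ together with the condition $\sigma \in \Aut(\Z,\star)$ to force $\phi$ either to be an involution or to take the rigid infinite-order form that makes $A_{\op}$ a bi-skew brace.
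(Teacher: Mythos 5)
There is a genuine gap, and you have named it yourself: the entire argument hinges on the claim that (when $\sigma\neq\id$) at least one of $A$, $A_\op$ is a bi-skew brace, and you only say that proving this ``would presumably go by extracting enough constraints on $\phi$.'' That dichotomy is not a technicality to be deferred --- it is the actual content of the theorem. Your reduction of the dichotomy to ``$A^2\subseteq\ker(\gamma)$ versus $A^2\subseteq\ker(\gamma_\op)$'' via Lemma~\ref{lem: abelian mult group X*Y=Y*opX} and Theorem~\ref{theorem: characterizations bi caranti} is correct, but nothing you write forces either containment to hold. Once the dichotomy is granted, your endgame is essentially the paper's: appeal to the uniqueness of the nontrivial brace on an infinite cyclic additive group from Rump's Proposition~6 (applied to $(A,\circ,\cdot)$ or to $(A,\circ,\cdot_\op)$) to land on \eqref{eq: mult2} or \eqref{eq: mult3}; the detour through identifying $(A,\cdot)\cong D_\infty$ is unnecessary for this and is itself only sketched (``pinning the quotient at order $2$'' is not justified).

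For comparison, the paper closes exactly the missing step by analysing $A^2$ directly. Since $(A,\circ)$ is abelian, $A$ is two-sided and $(A^2,\cdot)$ is abelian, so in the nonabelian case $A^2$ is a proper nontrivial ideal; being a two-sided brace with multiplicative group $\Z$, it is a trivial brace with $(A^2,\cdot)\cong\Z$ by Corollary~\ref{cor: Triv(Z) is only abelian brace with multiplicative group Z}, and $(A,\cdot)$ is generated by $x$ together with a generator of $A^2$. Conjugation by $x$ on $(A^2,\cdot)$ must then be the inversion (else $(A,\cdot)$ would be abelian), and $\gamma(x)$ restricts to an automorphism of $(A^2,\cdot)\cong\Z$, hence is either the identity or the inversion on $A^2$. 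In the first case $A*A^2=A^2*_\op A=\{1\}$, so $A_\op$ is a bi-skew brace; in the second case $\gamma_\op(x)$ restricts to the identity on $A^2$, giving $A*_\op A^2=A^2_\op*A=\{1\}$, so $A$ is a bi-skew brace. This case split on the restriction of $\gamma(x)$ to $A^2$ is precisely the argument your proposal is missing; without it, your proof establishes only that \emph{if} one of $A$, $A_\op$ is bi-skew then the operation is one of the three listed, not that these are the only possibilities.
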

\begin{proof}
If $(A,\cdot)$ is abelian, then $\cdot$ is given by \eqref{eq: mult1} by Corollary~\ref{cor: Triv(Z) is only abelian brace with multiplicative group Z}. 

From now on, we assume that $(A,\cdot)$ is not abelian. As $A$ is two-sided, it follows from~\cite[Lemma 4.5]{Nas19} that $(A^2,\cdot)$ is abelian. In particular, $A^2\ne A$. Moreover, note that $A^2\ne 1$, otherwise $A$ would be trivial, so $(A,\cdot)$ would be abelian. We deduce that there exists $n\ge 2$ such that $A^2=\{x^{\circ nk}\mid k\in \Z\}$.

 As $A^2$ is a brace with multiplicative group isomorphic to $\Z$, it follows from Corollary~\ref{cor: Triv(Z) is only abelian brace with multiplicative group Z} that $A^2$ is a trivial skew brace. Because $(A/A^2,\circ)\cong C_n$, we find that $A/A^2\cong \Triv(C_n)$. 
 
 As $x$ generates $(A,\circ)$, its equivalence class in $A/A^2$ generates $(A/A^2,\circ)$, and therefore also $(A/A^2,\cdot)$. If we take $a\in A^2$ to be a generator of $(A^2,\cdot)$, then $(A,\cdot)$ is generated by $a$ and $x$. Denote by $\psi$ the inner automorphism on $(A^2,\cdot)$ induced by $x$ in $(A,\cdot)$. As $(A,\cdot)$ is not abelian, $\psi$ is not trivial, so necessarily $\psi$ is the inversion automorphism. 
 
 Likewise, $\gamma(x)$ restricts to an automorphism of $(A^2,\cdot)$, which is either the identity or equals $\psi$.
 \begin{enumerate}
  	\item If $\gamma(x)$ is the identity on $A^2$, then using Lemma~\ref{lem: abelian mult group X*Y=Y*opX} we find $A^2*_\op A=A*A^2=\{1\}$, so $A_\op$ is a bi-skew brace. This means that $(A,\circ,\cdot_\op)$ is a nontrivial skew brace on $(A,\circ)$, so necessarily $\cdot$ is given by \eqref{eq: mult3}. 
    \item If $\gamma(x)$ restricts to the inversion automorphism on $(A^2,\cdot)$, and therefore is equal to $\psi$ on $A^2$, then $\gamma_{\op}(x)$ equals $\psi^2=\id$ on $A^2$. Using Lemma~\ref{lem: abelian mult group X*Y=Y*opX} we find $A^2_\op*A=A*_\op A^2=\{1\}$. So $(A,\circ,\cdot)$ is a non-trivial skew brace on $(A,\circ)$, which implies that $\cdot$ is given by \eqref{eq: mult2}.
 	 \qedhere
 \end{enumerate} 
\end{proof}

\section{Bi-skew braces and solutions of the Yang--Baxter equation}\label{sec: bi and YBE}

The interest in set-theoretic solutions of the Yang--Baxter equation, as a simplification of its linear solutions, goes back to V.~G.~Drinfel'd \cite{Dri92}. Compared to the linear version, set-theoretic solutions are easier to study and classify. Nonetheless, set-theoretic solutions can be linearised and are also omnipresent in the study of link and knot invariants~\cite{NV06}. In this section we investigate connections between bi-skew braces and associated set-theoretic solutions of the Yang--Baxter equation. Before doing so, we give a short summary of all the necessary notions involved.

A \emph{set-theoretic solution of the Yang--Baxter equation} is a pair $(X,r)$ with $X$ a nonempty set and $r\colon X^2\to X^2$ a bijective map satisfying the braid equation
\begin{equation*}
    r_1r_2r_1=r_2r_1r_2,
\end{equation*}
where $r_1=r\times \id_X$ and $r_2=\id_X\times r$. On every nonempty set $X$, the map $(x,y)\mapsto (y,x)$ satisfies the braid equation; this solution is said to be \emph{trivial}. We say that a solution is \emph{nondegenerate} if the maps $\sigma_x,\tau_x\colon X\to X$ defined by $r(x,y)=(\sigma_x(y),\tau_y(x))$ are bijective. In the rest of this section, we will shortly refer to a nondegenerate set-theoretic solution of the Yang--Baxter equation as a \emph{solution}. Furthermore, we say that a solution is \emph{involutive} if $r^2=\id$. Given a solution $(X,r)$, it is easily checked that also $(X,r^{-1})$ is a solution. We define $\hat{\sigma},\hat{\tau}\colon X\to X$ by $r^{-1}(x,y)=(\hat{\sigma}_x(y),\hat{\tau}_y(x))$. 

Given two solutions $(X,r)$ and $(Y,s)$, we say that a map $f\colon X\to Y$ is a \emph{homomorphism of solutions} if $(f\times f)r=s(f\times f)$. One can prove that in this case the image of $f$ is a subsolution of $(Y,s)$, meaning that $s$ restricts to $f(X)\times f(X)$. A bijective homomorphism of solutions is called an \emph{isomorphism of solutions}.

Given a skew brace $A$, define \begin{equation*}
    r_A\colon A^2\to A^2, \quad (a,b)\mapsto (\lexp{b}{\gamma(a)},\overline{\lexp{b}{\gamma(a)}}\circ a\circ b).
\end{equation*}
Then the pair $(A,r_A)$ is a solution~\cite{LYZ00,GV17} and $(A_\op, r_{A_\op})$ is its inverse solution ~\cite{KT20a}. This construction is functorial: a skew brace homomorphism $f\colon A\to B$ induces a homomorphism of solutions $f\colon (A,r_A)\to (B,r_B)$.

Following~\cite{ESS99} we define the \emph{structure group} of a solution $(X,r)$ as 
\begin{equation*}
    G(X,r)=\langle X\mid x\circ y=\sigma_x(y)\circ \tau_y(x) \text{ for all }x,y\in X\rangle.
\end{equation*}
For each solution $(X,r)$, one can also define its \emph{derived structure group} as
\begin{equation*}
    A(X,r)=\langle X\mid x\cdot y=y\cdot \sigma_y\hat{\sigma}_y(x)\rangle.
\end{equation*}
It is possible to construct a bijection between $A(X,r)$ and $G(X,r)$ such that, transferring the group structure of $A(X,r)$ to $G(X,r)$, one obtains a skew brace $(G(X,r),\cdot,\circ)$. This is a brace if and only if $(X,r)$ is involutive; see~\cite{Sol00,LV19}. Under this bijection, the generator of $A(X,r)$ corresponding to some element $x\in X$ is mapped to the generator of $G(X,r)$ corresponding to $x$. We define $\iota\colon X\to G(X,r)$ as the canonical map sending each element to its corresponding generator. We therefore find that $\iota(X)\subseteq (G(X,r),\cdot,\circ)$ generates both the additive and multiplicative group. Moreover, the skew brace $(G(X,r),\cdot,\circ)$ satisfies the property that $\iota$ is a homomorphism of solutions $\iota\colon (X,r)\to (G(X,r),r_{G(X,r)})$. In particular, this means that for all $x,y\in X$ the equality $\iota(\sigma_x(y))=\lexpp{\iota(y)}{\gamma(\iota(x))}$ holds in $G(X,r)$. Similarly, it follows that $\iota(\hat{\sigma}_x(y))=\lexpp{\iota(y)}{\gamma_\op(\iota(x))}$ holds in $G(X,r)$. A solution is \emph{injective} if $\iota$ is an injective map. An involutive solution is always injective. To any solution $(X,r)$ one can associate its \emph{injectivization} $\Inj(X,r)$, which is the image of the homomorphism $\iota\colon(X,r)\to (G(X,r),r_{G(X,r)})$. It is clear that $G(\Inj(X,r))$ and $G(X,r)$ are isomorphic as skew braces and therefore $\Inj(X,r)$ is indeed an injective solution.

Following~\cite{Bac18,CJKVAV22}, we can associate to any solution $(X,r)$ another group called the \emph{permutation group}, defined as 
\begin{equation*}
    \mathcal{G}(X,r)=\langle (\sigma_x,\tau_x^{-1})\mid x\in X\rangle \subseteq \Perm(X)\times \Perm(X).
\end{equation*}
There exists a unique surjective group homomorphism $\pi\colon G(X,r)\to \mathcal{G}(X,r)$ satisfying $\pi(x)=(\sigma_x,\tau_x^{-1})$. It can be shown that the kernel of this map $\pi$ is an ideal of $(G(X,r),\cdot,\circ)$, hence there is a natural skew brace structure induced on $\mathcal{G}(X,r)$ such that $\pi$ is a skew brace homomorphism. If $(X,r)$ is injective, then $\ker (\pi)=\Soc(G(X,r))$. In particular, note that for any involutive (hence injective) solution $(X,r)$, we find that $\ker (\pi)=\ker (\gamma)$. As the construction of the solution associated to a skew brace is functorial, we find a homomorphism of solutions $\pi\iota\colon (X,r)\to (\G(X,r),r_{\G(X,r)})$. The image of this homomorphism is called the \emph{retract} of $(X,r)$, denoted by $\Ret(X,r)$. It can be verified that $\Ret(X,r)$ can also be obtained as the induced solution on the equivalence classes of the equivalence relation given by
\begin{equation*}
    x\sim y\iff \sigma_x=\sigma_y\text{ and }\tau_x=\tau_y,
\end{equation*}
which is its original definition in literature.

We now move on to our first result.

\begin{proposition}\label{proposition: new condition bi}
	Let $A$ be a skew brace. Then $A$ is a bi-skew brace if and only if for all $a,b\in A$,
	\begin{equation}\label{eq: 6.1}
		\gamma(\lexp{b}{\gamma_{\op}(a)})=\gamma(b).
	\end{equation}
\end{proposition}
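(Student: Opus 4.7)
The plan is to reduce the stated condition directly to condition~(2) of Theorem~\ref{theorem: characterizations bi caranti}, namely that $\gamma(a\cdot b)=\gamma(b)\gamma(a)$ for all $a,b\in A$. The starting observation is simply that $\lexp{b}{\gamma_\op(a)}=(a\circ b)\cdot a^{-1}$, so \eqref{eq: 6.1} reads
\[
\gamma\bigl((a\circ b)\cdot a^{-1}\bigr)=\gamma(b) \quad \text{for all }a,b\in A.
\]

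I would then introduce the substitution $x=(a\circ b)\cdot a^{-1}$, so that $a\circ b=x\cdot a$ and hence $b=\overline{a}\circ(x\cdot a)$. The key point to note is that, for each fixed $a\in A$, the assignment $b\mapsto x$ is a bijection $A\to A$, since left $\circ$-multiplication by $\overline{a}$ is bijective on $A$. Applying the fact that $\gamma\colon (A,\circ)\to \Aut(A,\cdot)$ is a group homomorphism gives
\[
\gamma(b)=\gamma\bigl(\overline{a}\circ(x\cdot a)\bigr)=\gamma(a)^{-1}\gamma(x\cdot a),
\]
so that \eqref{eq: 6.1} becomes $\gamma(x)=\gamma(a)^{-1}\gamma(x\cdot a)$, i.e.\ $\gamma(x\cdot a)=\gamma(a)\gamma(x)$.

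Because of the bijectivity above, the quantifier ``for all $a,b\in A$'' is equivalent to ``for all $a,x\in A$''. Thus \eqref{eq: 6.1} holds for all $a,b\in A$ if and only if $\gamma(x\cdot a)=\gamma(a)\gamma(x)$ holds for all $a,x\in A$, which is precisely condition~(2) of Theorem~\ref{theorem: characterizations bi caranti}. By that theorem, this is equivalent to $A$ being a bi-skew brace, and both directions of the claim follow. There is no genuine obstacle here beyond keeping track of the substitution; the content of the statement is really that the equation $\gamma((a\circ b)\cdot a^{-1})=\gamma(b)$ is a rewriting of $\gamma(x\cdot a)=\gamma(a)\gamma(x)$ using the change of variables induced by $(A,\circ)$.
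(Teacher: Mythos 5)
Your argument is correct. The identity $\lexp{b}{\gamma_\op(a)}=(a\circ b)\cdot a^{-1}$, the bijectivity of $b\mapsto x=(a\circ b)\cdot a^{-1}$ for fixed $a$, and the computation $\gamma(b)=\gamma(a)^{-1}\gamma(x\cdot a)$ via the homomorphism $\gamma\colon(A,\circ)\to\Aut(A,\cdot)$ are all sound, and the resulting condition $\gamma(x\cdot a)=\gamma(a)\gamma(x)$ for all $a,x$ is exactly condition~(2) of Theorem~\ref{theorem: characterizations bi caranti}. The route differs from the paper's in a small but genuine way: the paper reduces to condition~(3) of that theorem, by observing that \eqref{eq: 6.1} says $\lexp{b}{\gamma_\op(a)}\circ\overline{b}\in\ker(\gamma)$ and then, using that membership, showing this element equals $\lexp{b}{\gamma_\op(a)}\cdot b^{-1}=a*_\op b$, whence $A_\op^2\subseteq\ker(\gamma)$; the forward implication is handled separately by citing the theorem. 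Your change of variables instead turns \eqref{eq: 6.1} directly into the antihomomorphism identity and so settles both implications in one biconditional chain, at the cost of having to justify the quantifier exchange via bijectivity, which you do. The paper's version has the small advantage of exhibiting the elements $a*_\op b$ explicitly, which connects \eqref{eq: 6.1} to the $*_\op$-operation; yours is arguably more transparent as a pure rewriting. Both are complete proofs.
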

\begin{proof}
	If $A$ is a bi-skew brace, then the assertion follows from Theorem~\ref{theorem: characterizations bi caranti}.

	For the other implication, suppose that \eqref{eq: 6.1} holds. For all $a,b\in A$, we have
 \begin{align*}
     \gamma(a\cdot b)&=\gamma(\lexp{a}{\gamma_\op(b)\gamma_\op(\overline{b})}\cdot b)\\
     &=\gamma(b\circ \lexp{a}{\gamma_\op(\overline{b})})\\
     &=\gamma(b)\gamma(\lexp{a}{\gamma_\op(\overline{b})})\\
     &=\gamma(b)\gamma(a).
 \end{align*}
 Hence, again, by Theorem~\ref{theorem: characterizations bi caranti}, $A$ is a bi-skew brace.
\end{proof}

The following is a straightforward corollary.
\begin{proposition}\label{prop: condition solution bi}
    Let $A$ be a skew brace. Then $A$ is a bi-skew brace if and only if its associated solution $(A,r_A)$ satisfies, for all $x,y\in A$,
    \begin{equation*} \sigma_{\hsigma_x(y)}=\sigma_y.
    \end{equation*}
\end{proposition}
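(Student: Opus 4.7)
The plan is to show that the condition $\sigma_{\hat{\sigma}_x(y)}=\sigma_y$ is merely a translation of the criterion in Proposition~\ref{proposition: new condition bi} into the language of the associated solution, so this result is an immediate corollary of the preceding one.

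First, I would unpack the two $\sigma$-maps. By definition of the solution $(A,r_A)$ attached to a skew brace, we have $\sigma_a(b)=\lexp{b}{\gamma(a)}$ for all $a,b\in A$. Since the inverse solution is $(A_\op,r_{A_\op})$, the reading $r_A^{-1}(x,y)=(\hat{\sigma}_x(y),\hat{\tau}_y(x))$ coincides with $r_{A_\op}(x,y)=(\lexp{y}{\gamma_\op(x)},\ldots)$, so that $\hat{\sigma}_x(y)=\lexp{y}{\gamma_\op(x)}$.

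Next, I would observe that the equality $\sigma_{\hat{\sigma}_x(y)}=\sigma_y$ of maps $A\to A$, applied to an arbitrary $z\in A$, reads
\begin{equation*}
\lexp{z}{\gamma(\lexp{y}{\gamma_\op(x)})}=\lexp{z}{\gamma(y)}.
\end{equation*}
Since this must hold for every $z$, it is equivalent to $\gamma(\lexp{y}{\gamma_\op(x)})=\gamma(y)$ as automorphisms of $(A,\cdot)$, i.e.\ exactly condition~\eqref{eq: 6.1} of Proposition~\ref{proposition: new condition bi} (with the roles of $a,b$ renamed to $x,y$). Conversely, this equivalence is reversible, so the two conditions are logically identical.

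Finally, invoking Proposition~\ref{proposition: new condition bi}, which states that \eqref{eq: 6.1} characterises bi-skew braces, closes the argument in both directions. The proof is essentially a dictionary check; the only thing to be careful about is justifying $\hat{\sigma}_x(y)=\lexp{y}{\gamma_\op(x)}$, but this is immediate from the identification $r_A^{-1}=r_{A_\op}$ recalled at the start of the section.
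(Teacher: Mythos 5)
Your proof is correct and matches the paper's intent exactly: the paper presents this proposition as a straightforward corollary of Proposition~\ref{proposition: new condition bi}, obtained precisely by the dictionary $\sigma_a(b)=\lexp{b}{\gamma(a)}$ and $\hat{\sigma}_x(y)=\lexp{y}{\gamma_\op(x)}$ coming from $r_A^{-1}=r_{A_\op}$. Nothing is missing.
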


As a result, the information whether $A$ is a bi-skew brace is not lost when one only considers its associated solution. Next, it is natural to ask whether, if we know that $A$ is a bi-skew brace, it is possible to recover the associated solution of $A_{\leftrightarrow}$ from the associated solution of $A$. The following example shows that this is in general not possible, as we construct nonisomorphic bi-skew braces $A$ and $B$ such that the associated solutions are isomorphic, but the solutions associated to $A_{\leftrightarrow}$ and $B_{\leftrightarrow}$ are not isomorphic.
\begin{example}
    Let $(G,\cdot)=C_2\times C_8$, with $C_2=\langle x\rangle$, and let $\psi_1\colon C_2\to \Aut(G)$ be the group homomorphism mapping $x$ to the inversion automorphism of $G$. By the semidirect product construction (see section~\ref{sec: structural result}), we find a bi-skew brace $A$ on the set $C_2\times G$, whose associated solutions can easily be computed as follows:
    \begin{align*}
    r_{A}((x^i,g),(x^j,h))&=((x^j,\lexp{h}{\psi_1(x)^i}),(x^i,\lexp{g}{\psi_1(x)^{j}})),\\
        r_{A_{\leftrightarrow}}((x^i,g),(x^j,h))
        &=((x^j,\lexp{h}{\psi_1(x)^i}),(x^i,g\cdot h\cdot\lexpp{h^{-1}}{\psi_1(x)^{i}})).
    \end{align*}
    In particular, ${\tau_{\leftrightarrow,(x^j,h)}}(1,g)=(1,g)$ and ${\tau_{\leftrightarrow,(x^j,h)}}(x,g)=(x,g\cdot h^2)$. Here $\tau_\leftrightarrow$ is the usual map associated with the solution $(A,r_{A_{\leftrightarrow}})$. Note that if $h\in G$ is an element of order 8, then $\tau_{\leftrightarrow,(x^j,h)}$ has order 4.
    
    Now take $(H,\cdot)=C_2^4$, and let $\psi_2\colon C_2\to \Aut(H)$, where still $C_2=\langle x\rangle$, be the map which sends $x$ to the automorphism interchanging the first two and the last two coordinates of $H$. In the same way as before we can then obtain a bi-skew brace $B$, and two associated solutions:
    \begin{align*}
        r_{B}((x^i,g),(x^j,h))&=((x^j,\lexp{h}{\psi_2(x)^i}),(x^i,\lexp{g}{\psi_2(x)^{j}})),\\
        r_{B_{\leftrightarrow}}((x^i,g),(x^j,h))&=((x^j,\lexp{h}{\psi_2(x)^{i}}),(x^i,g\cdot h\cdot\lexp{h}{\psi_2(x)^{i}})).
    \end{align*}
    In this case once again, we find that ${\tau_{\leftrightarrow,(x^j,h)}}(1,g)=(1,g)$ and $\tau_{\leftrightarrow,(x^j,h)}(x,g)=(x,g\cdot h\cdot\lexp{h}{\psi_2(x)^{i}})$. One easily checks that all $\tau$-maps associated to $ r_{B_{\leftrightarrow}}$ have either order one or two. Therefore, $ r_{A_{\leftrightarrow}}$ can not be isomorphic to $ r_{B_{\leftrightarrow}}$.
    
    The cycle structures of $\psi_1(x)$ and $\psi_2(x)$ are the same; they are both of order two and fix four points. Therefore, there exists a bijection $\theta\colon G\to H$ such that $\theta \psi_1(x)=\psi_2(x)\theta$, and in particular the bijection 
    \begin{align*}
    	C_2\times G\to C_2\times H,\quad  
    	(x^i,g)\mapsto (x^i,\theta(g))
    \end{align*} gives an isomorphism between the solutions $r_{A}$ and $r_{B}$. 
\end{example}
We now deal with the opposite situation, where we start with a given solution and ask whether the skew brace on the structure group is a bi-skew brace.
\begin{theorem}\label{theorem: G(X r) is bi-skew}
    Let $(X,r)$ be an injective solution. Then $G(X,r)$ is a bi-skew brace if and only if $\sigma_{\hsigma_x(y)}=\sigma_y$ for all $x,y\in X$.
\end{theorem}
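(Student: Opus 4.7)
The plan is to leverage Propositions~\ref{proposition: new condition bi} and~\ref{prop: condition solution bi} to pass between conditions on $X$ and on $G(X,r)$, exploiting that $\iota$ is an injective homomorphism of solutions and that $\iota(X)$ generates both the additive and multiplicative group of $G(X,r)$.

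For the forward direction, I would apply Proposition~\ref{prop: condition solution bi} to the bi-skew brace $G(X,r)$, obtaining $\sigma^G_{\hsigma^G_a(b)} = \sigma^G_b$ for all $a,b \in G(X,r)$. Specializing to $a = \iota(x)$, $b = \iota(y)$, evaluating at $\iota(z)$ for $z \in X$, and using that $\iota$ intertwines $\sigma, \hsigma$ with $\sigma^G, \hsigma^G$, yields $\iota(\sigma_{\hsigma_x(y)}(z)) = \iota(\sigma_y(z))$. Injectivity of $\iota$ then gives $\sigma_{\hsigma_x(y)} = \sigma_y$.

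For the reverse direction, the first step is to promote the condition from $X$ to $\iota(X) \subseteq G(X,r)$. Since $\iota$ is a solution homomorphism, the hypothesis translates into $\sigma^G_{\iota(\hsigma_x(y))}(\iota(z)) = \sigma^G_{\iota(y)}(\iota(z))$ for all $x,y,z \in X$. The maps $\sigma^G_c = \gamma^G(c)$ are automorphisms of $(G(X,r),\cdot)$, and since $\iota(X)$ generates this additive group, agreement on $\iota(X)$ extends to agreement on all of $G(X,r)$. Hence $\gamma^G(\iota(\hsigma_x(y))) = \gamma^G(\iota(y))$ as automorphisms of $(G(X,r),\cdot)$ for every $x,y \in X$.

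The crux is to extend the resulting identity from pairs in $\iota(X)\times\iota(X)$ to all of $G(X,r) \times G(X,r)$ so that Proposition~\ref{prop: condition solution bi} can be applied to $G(X,r)$. To this end, I would introduce
\[
S = \{a \in G(X,r) : \gamma^G(\hsigma^G_a(b)) = \gamma^G(b) \text{ for all } b \in G(X,r)\}
\]
and show that $S$ is a subgroup of $(G(X,r),\circ)$. This follows because $\gamma_\op\colon (G(X,r),\circ) \to \Aut(G(X,r),\cdot)$ is a group homomorphism, so $\hsigma^G$ is a left action of $(G,\circ)$ and products and inverses in $(G,\circ)$ preserve membership in $S$. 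It then remains to prove $\iota(X)\subseteq S$; fixing $x\in X$ and considering $T_x = \{b \in G(X,r) : \gamma^G(\hsigma^G_{\iota(x)}(b)) = \gamma^G(b)\}$, one must establish $T_x = G(X,r)$ knowing that $\iota(X)\subseteq T_x$ from the previous paragraph.

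The main obstacle I expect is verifying that $T_x$ is a subgroup of $(G(X,r),\cdot)$. Closure under the additive product seems tractable: $\gamma_\op(\iota(x))$ is a $(G,\cdot)$-automorphism, and $\ker(\gamma^G)$ is closed under $\cdot$ because $k_1\cdot k_2 = k_1 \circ k_2$ whenever $k_1 \in \ker(\gamma^G)$. The delicate points are the handling of additive inverses and the possibility that $\ker(\gamma^G)$ fails to be normal in $(G,\cdot)$ a priori. I anticipate circumventing this by reformulating via condition (3) of Theorem~\ref{theorem: characterizations bi caranti}, namely $A_\op^2\subseteq\ker(\gamma^G)$, and exploiting that $A_\op^2$ is automatically an ideal so that the relevant closure properties become available. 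Once $T_x = G(X,r)$, we conclude $\iota(X)\subseteq S$, hence $S=G(X,r)$, which is precisely the condition of Proposition~\ref{prop: condition solution bi} for $G(X,r)$.
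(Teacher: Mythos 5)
Your forward direction is fine and coincides with the paper's. The reverse direction, however, has a genuine gap at exactly the point you identify as the crux. Your reduction requires showing $T_x=G(X,r)$, i.e.\ extending the identity $\gamma(\lexp{b}{\gamma_\op(\iota(x))})=\gamma(b)$ from $b\in\iota(X)$ to arbitrary $b$, and the reason you give for closure of $T_x$ under the additive product does not achieve this. Writing $\phi=\gamma_\op(\iota(x))$ and using $b_1\cdot b_2=b_1\circ\lexp{b_2}{\gamma(b_1)^{-1}}$ together with $\gamma(\phi(b_1))=\gamma(b_1)$, one finds
\begin{align*}
\gamma(b_1\cdot b_2)&=\gamma(b_1)\,\gamma\bigl(\lexp{b_2}{\gamma(b_1)^{-1}}\bigr),\\
\gamma(\phi(b_1)\cdot \phi(b_2))&=\gamma(b_1)\,\gamma\bigl(\lexp{\phi(b_2)}{\gamma(b_1)^{-1}}\bigr),
\end{align*}
so closure of $T_x$ under $\cdot$ amounts to $\gamma(\lexp{\phi(b_2)}{\gamma(b_1)^{-1}})=\gamma(\lexp{b_2}{\gamma(b_1)^{-1}})$. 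This does not follow from $\gamma(\phi(b_2))=\gamma(b_2)$: in a general skew brace the value of $\gamma$ on $\lexp{u}{\gamma(a)}$ is not determined by $\gamma(u)$. The fact you invoke (that $\ker\gamma$ is closed under $\cdot$) is true but addresses a different closure property; closure of $T_x$ under $\circ$ fails for a similar reason, since it would need $\phi$ to commute with $\gamma(b_1)$. Your fallback via $A_\op^2$ hits the same wall: $A_\op^2$ is generated as a subgroup of $(G,\cdot)$ by all $a*_\op b$, and bootstrapping from generators $b\in\iota(X)$ to arbitrary $b$ via the standard identity $a*_\op(b\cdot_\op c)=(a*_\op b)\cdot_\op b\cdot_\op(a*_\op c)\cdot_\op b^{-1}$ requires $\ker\gamma$ to be stable under $(G,\cdot)$-conjugation, which is only known after bi-skewness is established (Lemma~\ref{lem: ker ideal if bi}).

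The paper sidesteps the problem by never extending the second argument. After extending the first argument over all of $(G(X,r),\circ)$ exactly as you do with your subgroup $S$ (but keeping $b$ restricted to $\iota(X)$), it proves by induction on the length of a word $w=x_1^{\epsilon_1}\cdots x_n^{\epsilon_n}$ in the \emph{additive} generators that $\gamma(w)=\gamma(x_n)^{\epsilon_n}\cdots\gamma(x_1)^{\epsilon_1}$, using the rewriting $w=v\circ\lexpp{x_1^{\epsilon_1}}{\gamma_\op(v)}$ with $v=x_2^{\epsilon_2}\cdots x_n^{\epsilon_n}$, the fact that $\gamma$ is a homomorphism on $(G(X,r),\circ)$, and the hypothesis only in the form $\gamma(\lexp{x_1}{\gamma_\op(v)})=\gamma(x_1)$ with $x_1\in X$ (plus a separate computation giving $\gamma(x^{-1})=\gamma(x)^{-1}$). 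This yields the antihomomorphism condition of Theorem~\ref{theorem: characterizations bi caranti}, so bi-skewness follows without ever knowing the condition of Proposition~\ref{prop: condition solution bi} for a general second argument. You should replace the $T_x$ step with an argument of this kind.
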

\begin{proof}
    The implication from left to right is a consequence of Proposition~\ref{prop: condition solution bi} and the fact that $(X,r)$ is injective.
    
    Now assume that for all $x,y\in X$, we have that $\sigma_{\hsigma_x(y)}=\sigma_y$. This means that $\gamma(\lexp{y}{\gamma_{\op}(x)})=\gamma(y)$ where $x,y$ are now considered as the generators of $G(X,r)$ and $\gamma$, respectively $\gamma_\op$, is the gamma function associated to $G(X,r)$, respectively $G(X,r)_\op$. In particular, as $X$ generates the multiplicative group $(G(X,r),\circ)$, it follows that $\gamma(\lexp{y}{\gamma_{\op}(g)})=\gamma(y)$ for all $g\in G(X,r)$. 
    
    For a word $w=x_1^{\epsilon_1}\cdot\ldots \cdot x_n^{\epsilon_n}$ with $x_i\in X$ and $\epsilon_i\in \{-1,1\}$, we will prove that 
    \begin{equation*}
    	\gamma(w)=\gamma(x_n)^{\epsilon_n}\ldots\gamma(x_1)^{\epsilon_1}.
    \end{equation*} As $(G(X,r),\cdot)$ is generated by $X$, this then proves that
    \begin{equation*}
    	\gamma\colon (G(X,r),\cdot)\to \Aut(G(X,r),\cdot)
    \end{equation*} is a group antihomomorphism, and therefore $G(X,r)$ is a bi-skew brace. We will prove this claim by induction on $n$. For $n=1$ and $\epsilon_1=1$ the statement is trivial. To also cover the case where $n=1$ and $\epsilon_1=-1$, we have to prove that $\gamma(x^{-1})=\gamma(x)^{-1}$ for all $x\in X$. For this, we note that there is the equality 
    $\lexpp{\overline{a}}{\gamma_\op(a)}=(a\circ \overline{a})\cdot a^{-1}=a^{-1}$, or equivalently, $\overline{\lexpp{a^{-1}}{\gamma_\op(a)^{-1}}}=a$, so substituting $a$ by $x^{-1}$ we find $\overline{\lexpp{x}{\gamma_\op(x^{-1})^{-1}}}=x^{-1}$, thus
    \begin{equation*}
    	\gamma(x^{-1})=\gamma\left(\overline{\lexpp{x}{\gamma_\op(x^{-1})^{-1}}}\right)=\gamma\left(\lexpp{x}{\gamma_\op(x^{-1})^{-1}}\right)^{-1}=\gamma(x)^{-1}.
    \end{equation*}
    Now assume that the statement holds for words of length $n-1$, and let $w=x_1^{\epsilon_1}\cdot\ldots\cdot x_n^{\epsilon_n}$ be a word of length $n$. If we write $v=x_2^{\epsilon_2}\cdot\ldots\cdot x_n^{\epsilon_n}$, then
    \begin{align*}
        \gamma(w)&=\gamma(v\circ \lexpp{x_1^{\epsilon_1}}{\gamma_{\op}(v)})\\
        &=\gamma(v)\gamma(\lexpp{x_1^{\epsilon_1}}{\gamma_{\op}(v)})\\
        &=\gamma(x_n)^{\epsilon_n}\ldots\gamma(x_2)^{\epsilon_2}\gamma(\lexpp{x_1}{\gamma_{\op}(v)})^{\epsilon_1}\\
        &=\gamma(x_n)^{\epsilon_n}\ldots\gamma(x_2)^{\epsilon_2}\gamma(x_1)^{\epsilon_1}.\qedhere
    \end{align*}
\end{proof}
\begin{corollary}
    Let $(X,r)$ be a solution such that for all $x,y\in X$,
    \begin{equation*}
        \sigma_{\hsigma_x(y)}=\sigma_y.
    \end{equation*} Then $G(X,r)$ is a bi-skew brace. 
\end{corollary}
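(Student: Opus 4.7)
The plan is to reduce to the injective case handled by the previous theorem via the injectivization $\Inj(X,r) \subseteq G(X,r)$. Recall from the preliminary summary that $\Inj(X,r)$ is an injective solution, that its structure skew brace $G(\Inj(X,r))$ is canonically isomorphic to $G(X,r)$, and that the canonical map $\iota\colon X \to G(X,r)$ is a homomorphism of solutions. In particular, I would use the fact that $\iota$ intertwines both $\sigma$ and $\hsigma$ with the corresponding maps of $(G(X,r), r_{G(X,r)})$, giving
\[
    \sigma_{\iota(x)}(\iota(y)) = \iota(\sigma_x(y)), \qquad \hsigma_{\iota(x)}(\iota(y)) = \iota(\hsigma_x(y))
\]
for all $x,y\in X$; the second identity follows from the first, together with the fact that a homomorphism of solutions automatically commutes with $r^{-1}$.

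Next I would verify that the hypothesis transfers to $\Inj(X,r)$. Using the two displayed identities, for arbitrary $x,y,z\in X$ one obtains
\[
    \sigma_{\hsigma_{\iota(x)}(\iota(y))}(\iota(z)) = \iota\bigl(\sigma_{\hsigma_x(y)}(z)\bigr) = \iota(\sigma_y(z)) = \sigma_{\iota(y)}(\iota(z)),
\]
where the middle equality invokes the assumption $\sigma_{\hsigma_x(y)} = \sigma_y$. Since $\Inj(X,r)$ is exactly the set of $\iota(w)$ for $w\in X$, this shows $\sigma_{\hsigma_u(v)} = \sigma_v$ as permutations of $\Inj(X,r)$ for all $u,v \in \Inj(X,r)$.

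Finally, I would apply Theorem~\ref{theorem: G(X r) is bi-skew} to the injective solution $\Inj(X,r)$ to conclude that $G(\Inj(X,r))$ is a bi-skew brace, and transport this along the skew brace isomorphism $G(X,r) \cong G(\Inj(X,r))$. I do not foresee any serious obstacle: the only point requiring care is the compatibility of $\iota$ with $\hsigma$, which is immediate once one recalls that $\iota$ being a solution homomorphism entails commuting with both $r$ and $r^{-1}$.
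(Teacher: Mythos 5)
Your proposal is correct and follows essentially the same route as the paper: pass to the injectivization $\Inj(X,r)$, observe that the hypothesis $\sigma_{\hsigma_x(y)}=\sigma_y$ is inherited by this homomorphic image, and then apply Theorem~\ref{theorem: G(X r) is bi-skew} together with the skew brace isomorphism $G(X,r)\cong G(\Inj(X,r))$. The paper compresses the transfer of the hypothesis into a single sentence, whereas you spell out the intertwining of $\sigma$ and $\hsigma$ under $\iota$; this is exactly the intended justification.
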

\begin{proof}
    As $\Inj(X,r)$ is a homomorphic image of $(X,r)$, it follows that $\Inj(X,r)$ still has the property that $\sigma_{\hsigma_x(y)}=\sigma_y$ for all $x,y\in \Inj(X,r)$. Because $G(X,r)$ and $G(\Inj(X,r))$ are isomorphic skew braces, the result follows from Proposition~\ref{theorem: G(X r) is bi-skew}.
\end{proof}
As an application, we find a nice description for involutive solutions such that the skew brace on the structure group is a bi-skew brace.
\begin{proposition}
Let $(X,r)$ be an involutive solution. Then the following statements are equivalent:
\begin{enumerate}
    \item $G(X,r)$ is a bi-skew brace.
    \item $\G(X,r)$ is a trivial brace.
    \item $\Ret(X,r)$ is a trivial solution.
\end{enumerate}
\end{proposition}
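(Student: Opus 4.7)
The plan is to establish the chain $(1) \Leftrightarrow (2) \Leftrightarrow (3)$ by exploiting the fact that for an involutive solution $(X,r)$, the skew brace $G(X,r)$ is actually a brace (so $G(X,r)_\op=G(X,r)$) and moreover $\ker(\pi)=\ker(\gamma)$, as noted in the preliminary discussion.

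For $(1)\Leftrightarrow(2)$, I would apply Theorem~\ref{theorem: characterizations bi caranti}: writing $A=G(X,r)$, the brace $A$ is a bi-skew brace if and only if $A^2=A_\op^2\subseteq \ker(\gamma)=\ker(\pi)$. Since $\pi\colon A\to \G(X,r)$ is a surjective skew brace homomorphism with kernel $\ker(\pi)$, this is equivalent to $\G(X,r)^2=\{1\}$. Finally, for a brace $B$, the equality $B^2=\{1\}$ says exactly that $\gamma(a)(b)=b$ for all $a,b\in B$, which is equivalent to $\gamma$ being trivial, i.e.\ $B$ being a trivial brace. This gives the equivalence with (2).

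For $(2)\Rightarrow(3)$, I note that if $\G(X,r)$ is a trivial brace, then its associated solution $r_{\G(X,r)}$ is the trivial flip $(a,b)\mapsto (b,a)$, and $\Ret(X,r)$, being the image of the solution homomorphism $\pi\iota\colon (X,r)\to (\G(X,r), r_{\G(X,r)})$, is a subsolution of a trivial solution and hence trivial. For $(3)\Rightarrow(2)$, I would reverse this: writing $a=\pi\iota(x)$, $b=\pi\iota(y)$ for $x,y\in X$, the assumption that $\Ret(X,r)$ is trivial gives $r_{\G(X,r)}(a,b)=(b,a)$, hence $\gamma(a)(b)=b$ for all $a,b\in \pi\iota(X)$. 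Since $\pi\iota(X)$ generates $(\G(X,r),\cdot)$ and each $\gamma(a)$ is an additive automorphism, we deduce $\gamma(a)=\id$ for every $a\in \pi\iota(X)$. Then, since $\pi\iota(X)$ also generates $(\G(X,r),\circ)$ and $\ker(\gamma)$ is a subgroup of the multiplicative group, we conclude $\ker(\gamma)=\G(X,r)$, so $\G(X,r)$ is a trivial brace.

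The main subtlety, rather than obstacle, will be to keep track of two distinct generation arguments at once in the proof of $(3)\Rightarrow(2)$: first that $\pi\iota(X)$ generates the additive group (used to lift $\gamma(a)|_{\pi\iota(X)}=\id$ to $\gamma(a)=\id$ as an automorphism), and then that it generates the multiplicative group (used to lift this to $\gamma\equiv\id$ globally). Both are immediate from the fact that $\iota(X)$ generates $(G(X,r),\cdot)$ and $(G(X,r),\circ)$ and that $\pi$ is surjective, but it is worth spelling out to avoid conflating the two operations.
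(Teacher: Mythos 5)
Your argument is correct and follows essentially the same route as the paper: for $(1)\Leftrightarrow(2)$ both proofs reduce, via Theorem~\ref{theorem: characterizations bi caranti} and the identification $\ker(\pi)=\Soc(G(X,r))=\ker(\gamma)$ valid for involutive (hence injective) solutions, to the statement that $G(X,r)^2\subseteq\ker(\gamma)$ if and only if $\G(X,r)$ has trivial $*$-operation; and for $(2)\Leftrightarrow(3)$ both use that $\Ret(X,r)$ is the image of $\pi\iota$ and generates both group structures of $\G(X,r)$. Your explicit two-step generation argument in $(3)\Rightarrow(2)$ is exactly the point the paper compresses into one sentence, so no gap.
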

\begin{proof}
    As $(X,r)$ is involutive, $\G(X,r)\cong G(X,r)/\ker (\gamma)$, where $\gamma$ is the gamma function of $G(X,r)$. It follows that $G(X,r)^{(3)}=\{1\}$ if and only if $\G(X,r)^{(2)}=\{1\}$. This proves the equivalence of the first two properties. 
    
    Assume that $\G(X,r)$ is a trivial brace, so its associated solution is a trivial solution. Because $\Ret(X,r)$ is equal to the image of the canonical map $\pi\iota\colon(X,r)\to (\G(X,r),r_{\G(X,r)})$, we find that $\Ret(X,r)$ is also trivial.
    
    At last, assume that $\Ret(X,r)$ is a trivial solution. Once again using the fact that $\Ret(X,r)$ embeds into $(\G(X,r),r_{\G(X,r)})$, we know that $\lexp{y}{\gamma(x)}=y$ in $\G(X,r)$ for all $x,y\in \Ret(X,r)\subseteq \G(X,r)$. As $\Ret(X,r)$ generates both the additive and multiplicative group of $\G(X,r)$, it follows that $\lexp{h}{\gamma(g)}=h$ for all $g,h\in \G(X,r)$. This means that $\G(X,r)$ is a trivial skew brace.
\end{proof}

\begin{remark}
    If $(X,r)$ is an involutive solution such that $\Ret(X,r)$ is trivial, then clearly $\Ret(\Ret(X,r))$ is the trivial solution on a singleton. Solutions with this property are said to be of \emph{multipermutation level 2}. Involutive multipermutation solutions of level 2 were classified in \cite{JPZD20} and in particular, \cite[Theorem 7.8]{JPZD20} gives an explicit construction of all involutive solutions with a trivial retract.
\end{remark}

\section{A characterisation of brace blocks, an explicit construction, and some examples}\label{sec: brace blocks}

Recall the definition of a brace block.

\begin{definition}
 Let $A$ be a set. A \emph{brace block}, denoted by $((A,\circ_i)\mid i\in I)$, consists of a family of group operations $(\circ_i\mid i\in I)$ on $A$ such that $(A,\circ_i,\circ_j)$ is a bi-skew brace for all $i,j\in I$. 
\end{definition}

In order to give a characterisation of brace blocks, we begin with a result on transitivity of bi-skew braces. 

\begin{theorem}\label{thm:main}
	Let $(A,\cdot,\circ_1)$ and $(A,\cdot,\circ_2)$ be bi-skew braces with gamma functions $\gamma_1$ and $\gamma_2$, respectively. Then $(A,\circ_1,\circ_2)$ is a bi-skew brace if and only if the following conditions hold: for all $a,b\in A$ and $i,j\in\{1,2\}$ with $i\ne j$,
	\begin{align*}
	   \gamma_i(a)\gamma_j(b)\gamma_i(a)^{-1}=\gamma_j(\lexp{b}{\gamma_i(a)}).
	\end{align*}
\end{theorem}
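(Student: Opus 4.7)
The plan is to identify the unique candidate gamma function for the putative skew brace $(A,\circ_1,\circ_2)$ and then unpack both skew brace axioms into statements about $\gamma_1$ and $\gamma_2$. Since $a\circ_1 \lexp{b}{\gamma_1(a)^{-1}\gamma_2(a)} = a\cdot\lexp{b}{\gamma_2(a)} = a\circ_2 b$, the candidate is forced to be $\delta(a) := \gamma_1(a)^{-1}\gamma_2(a)$, which is automatically an element of $\Aut(A,\cdot)$. Hence $(A,\circ_1,\circ_2)$ is a skew brace if and only if (i) $\delta(a)\in\Aut(A,\circ_1)$ for every $a\in A$, and (ii) $\delta\colon (A,\circ_2)\to \Aut(A,\circ_1)$ is a group homomorphism.

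The crucial observation is a clean intrinsic reformulation: the identity $\gamma_i(a)\gamma_j(b)\gamma_i(a)^{-1}=\gamma_j(\lexp{b}{\gamma_i(a)})$ holds for all $b\in A$ if and only if $\gamma_i(a)\in\Aut(A,\circ_j)$. This comes from expanding both sides of $\gamma_i(a)(b\circ_j c)=\gamma_i(a)(b)\circ_j\gamma_i(a)(c)$ via $b\circ_j c=b\cdot\lexp{c}{\gamma_j(b)}$ and using that $\gamma_i(a)$ already preserves $\cdot$. Note that the $i=j$ case is free from the bi-skew hypothesis: combining $\gamma_i(a\circ_i b)=\gamma_i(a)\gamma_i(b)$ with the antihomomorphism $\gamma_i(a\cdot\lexp{b}{\gamma_i(a)})=\gamma_i(\lexp{b}{\gamma_i(a)})\gamma_i(a)$ supplied by Theorem~\ref{theorem: characterizations bi caranti} immediately yields it. So the hypothesis of the theorem is precisely that $\gamma_2(a)\in\Aut(A,\circ_1)$ and $\gamma_1(a)\in\Aut(A,\circ_2)$ for all $a$, which combined with the automatic $i=j$ case gives $\delta(a)=\gamma_1(a)^{-1}\gamma_2(a)\in\Aut(A,\circ_1)$, settling (i).

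For (ii), I would compute $\delta(a\circ_2 b)=\gamma_1(a\circ_2 b)^{-1}\gamma_2(a)\gamma_2(b)$, rewriting $\gamma_1(a\circ_2 b)=\gamma_1(a\cdot\lexp{b}{\gamma_2(a)})=\gamma_1(\lexp{b}{\gamma_2(a)})\gamma_1(a)$ by the antihomomorphism property (Theorem~\ref{theorem: characterizations bi caranti}), and then applying the hypothesis with $(i,j)=(2,1)$ to replace $\gamma_1(\lexp{b}{\gamma_2(a)})$ by $\gamma_2(a)\gamma_1(b)\gamma_2(a)^{-1}$. The middle factors telescope to give $\delta(a\circ_2 b)=\gamma_1(a)^{-1}\gamma_2(a)\gamma_1(b)^{-1}\gamma_2(b)=\delta(a)\delta(b)$, which is (ii). Thus $(A,\circ_1,\circ_2)$ is a skew brace, and swapping the roles of $1$ and $2$ (the hypothesis is symmetric) shows $(A,\circ_2,\circ_1)$ is a skew brace too, so $(A,\circ_1,\circ_2)$ is bi-skew. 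Conversely, if $(A,\circ_1,\circ_2)$ is a bi-skew brace, uniqueness forces the two gamma functions to be $\delta$ and $\gamma_2(a)^{-1}\gamma_1(a)$; condition (i) applied to each skew brace structure then delivers precisely the identities for $(i,j)=(2,1)$ and $(i,j)=(1,2)$, respectively. The main obstacle is organizational rather than technical: one must not confuse $\Aut(A,\cdot)$ with $\Aut(A,\circ_1)$ or $\Aut(A,\circ_2)$, and the intrinsic reformulation above is what keeps everything straight before the telescoping computation falls out routinely.
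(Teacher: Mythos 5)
Your proof is correct and follows essentially the same route as the paper: identify $\delta(a)=\gamma_1(a)^{-1}\gamma_2(a)$ as the forced gamma function, show the displayed identity is exactly the condition $\gamma_i(a)\in\Aut(A,\circ_j)$, and verify via the same telescoping computation that $\delta$ is then automatically a homomorphism $(A,\circ_2)\to\Aut(A,\circ_1)$. Your explicit isolation of the automatic $i=j$ case is a nice organizational touch but does not change the substance of the argument.
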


\begin{proof}
	By symmetry, we can just look at when $(A,\circ_1,\circ_2)$ is a skew brace. As 
\begin{equation*}
    a\circ_2 b=a\cdot \lexp{b}{\gamma_2(a)}=a\circ_1 \lexp{b}{\gamma_1(a)^{-1}\gamma_2(a)},
\end{equation*}
we need to find under which conditions $\gamma(a)=\gamma_1(a)^{-1}\gamma_2(a)$ is a gamma function on $(A,\circ_1)$. 

The first condition to check is whether for all $a\in A$, we have $\gamma(a)\in \Aut(A,\circ_1)$, or equivalently, $\gamma_2(a)\in\Aut(A,\circ_1)$.
Here we have
\begin{gather*}
    \lexpp{b\circ_1 c}{\gamma_2(a)}=\lexpp{b\cdot \lexp{c}{\gamma_1(b)}}{\gamma_2(a)}
    =\lexp{b}{\gamma_2(a)}\cdot\lexp{c}{\gamma_2(a)\gamma_1(b)}\intertext{and}
    \lexp{b}{\gamma_2(a)}\circ_1 \lexp{c}{\gamma_2(a)}=\lexp{b}{\gamma_2(a)}\cdot \lexp{c}{\gamma_1(\lexp{b}{\gamma_2(a)})\gamma_2(a)}.
\end{gather*}
We find that $\gamma(a)\in \Aut(A,\circ_1)$ if and only if for all $a,b\in A$,
\begin{equation}\label{eq:maincondition}
	\gamma_2(a)\gamma_1(b)\gamma_2(a)^{-1}=\gamma_1(\lexp{b}{\gamma_2(a)}).
\end{equation} 
Now suppose that \eqref{eq:maincondition} holds. We claim that it is already enough to deduce that $\gamma\colon (A,\circ_2)\to \Aut(A,\circ_1)$ is a group homomorphism. For all $a,b\in A$, 
\begin{align*}
	\gamma(a\circ_2 b)&=\gamma_1(a\cdot \lexp{b}{\gamma_2(a)})^{-1}\gamma_2(a\circ_2 b)\\
	&=\gamma_1(a)^{-1}\gamma_1(\lexp{b}{\gamma_2(a)})^{-1}\gamma_2(a)\gamma_2(b)\\
	&=\gamma_1(a)^{-1}\gamma_2(a)\gamma_1(b)^{-1}\gamma_2(a)^{-1}\gamma_2(a)\gamma_2(b)\\
	&=\gamma_1(a)^{-1}\gamma_2(a)\gamma_1(b)^{-1}\gamma_2(b)=\gamma(a)\gamma(b).\qedhere
\end{align*}
\end{proof}
\begin{definition}
Let $(A,\cdot)$ be a group. A \textit{brace block on $(A,\cdot)$} is a brace block $((A,\circ_i)\mid i\in I)$ such that $(A,\circ_k)=(A,\cdot)$ for some $k\in I$.
\end{definition}

We deduce the following characterisation for brace blocks on a given group.

 \begin{theorem}\label{thm:mainblock}
	Let $(A,\cdot)$ be a group. Then the following data are equivalent:
\begin{enumerate}
	\item A brace block on $(A,\cdot)$.
	\item A family of maps $(\gamma_i\mid i\in I)$ such that the following conditions hold:
	 \begin{itemize}
	\item $\gamma_i\colon (A,\cdot)\to \Aut(A,\cdot)$ is a group antihomomorphism for all $i\in I$.
	\item There exists $k\in I$ such that $\gamma_k(a)=\id$ for all $a\in A$.
	\item For all $i,j\in I$ and $a,b\in A$,
		\begin{equation*}
			\gamma_i(a)\gamma_j(b)\gamma_i(a)^{-1}=\gamma_j(\lexp{b}{\gamma_i(a)}). 
		\end{equation*} 
\end{itemize}
\end{enumerate}
\end{theorem}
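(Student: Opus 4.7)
The plan is to deduce both directions from the combination of Theorem~\ref{theorem: characterizations bi caranti} and Theorem~\ref{thm:main}, applied to each pair in the family.

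For the implication (1) $\Rightarrow$ (2), I would start with a brace block $((A,\circ_i)\mid i\in I)$ on $(A,\cdot)$, with distinguished index $k$ such that $\circ_k=\cdot$. For each $i\in I$, the pair $(A,\cdot,\circ_i)=(A,\circ_k,\circ_i)$ is by assumption a bi-skew brace, so I let $\gamma_i$ denote its gamma function (with $\gamma_k=\id$). Theorem~\ref{theorem: characterizations bi caranti} then tells me that each $\gamma_i$ is an antihomomorphism $(A,\cdot)\to\Aut(A,\cdot)$. For the third condition, if $i\ne j$ then $(A,\circ_i,\circ_j)$ is a bi-skew brace by hypothesis, and since $(A,\cdot,\circ_i)$ and $(A,\cdot,\circ_j)$ are bi-skew braces, Theorem~\ref{thm:main} applied to this pair gives exactly the required compatibility $\gamma_i(a)\gamma_j(b)\gamma_i(a)^{-1}=\gamma_j(\lexp{b}{\gamma_i(a)})$. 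For $i=j$ the same identity is an easy consequence of $\gamma_i$ being simultaneously a gamma function and an antihomomorphism (see the argument in the next paragraph, read in reverse).

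For the implication (2) $\Rightarrow$ (1), the first thing to verify is that each $\gamma_i$ is actually a gamma function for $(A,\cdot)$, so that it defines a skew brace $(A,\cdot,\circ_i)$ via \eqref{eq: circ wrt gamma}. This is the small but essential observation of the proof: applying the third condition with $j=i$ yields
\begin{equation*}
    \gamma_i(\lexp{b}{\gamma_i(a)})=\gamma_i(a)\gamma_i(b)\gamma_i(a)^{-1},
\end{equation*}
while the antihomomorphism property with $c=\lexp{b}{\gamma_i(a)}$ gives
\begin{equation*}
    \gamma_i(a\cdot \lexp{b}{\gamma_i(a)})=\gamma_i(\lexp{b}{\gamma_i(a)})\gamma_i(a).
\end{equation*}
Combining these two yields $\gamma_i(a\cdot \lexp{b}{\gamma_i(a)})=\gamma_i(a)\gamma_i(b)$, which is precisely \eqref{eq: gamma function}. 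Hence $\gamma_i$ determines a skew brace $(A,\cdot,\circ_i)$, and by Theorem~\ref{theorem: characterizations bi caranti} it is a bi-skew brace because $\gamma_i$ is an antihomomorphism. Moreover, $\circ_k=\cdot$ since $\gamma_k=\id$.

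It remains to show that $(A,\circ_i,\circ_j)$ is a bi-skew brace for every pair $i,j\in I$. When $i=j$ this is trivial. When $i\ne j$, the previous paragraph supplies the bi-skew braces $(A,\cdot,\circ_i)$ and $(A,\cdot,\circ_j)$ with gamma functions $\gamma_i$ and $\gamma_j$, and the given compatibility is exactly the condition in Theorem~\ref{thm:main} that guarantees $(A,\circ_i,\circ_j)$ to be a bi-skew brace. The two characterisations are therefore equivalent. The only mild subtlety — really the only content beyond bookkeeping — is the observation above that the antihomomorphism property together with the self-compatibility $(i=j)$ secretly encodes the gamma function axiom; once that is in hand, the rest is a clean application of the previously established theorems.
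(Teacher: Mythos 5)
Your proposal is correct and follows essentially the same route as the paper: the key observation in both is that the compatibility condition with $j=i$, combined with the antihomomorphism property, yields the gamma function axiom $\gamma_i(a\cdot \lexp{b}{\gamma_i(a)})=\gamma_i(a)\gamma_i(b)$, after which everything reduces to Theorem~\ref{theorem: characterizations bi caranti} and Theorem~\ref{thm:main}. You merely spell out the converse direction, which the paper leaves implicit.
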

\begin{proof}
	For all $i\in I$, we find that $(A,\cdot,\circ_i)$ is a bi-skew brace because $\gamma_i$ is a group antihomomorphism and the last condition for $i=j$ implies that 
	\begin{equation*}
		\gamma_i(a\cdot \lexp{b}{\gamma_i(a)})=\gamma_i(a)\gamma_i(b).
	\end{equation*}
	Now apply Theorem~\ref{thm:main}.
\end{proof}

\begin{remark}
	A similar condition was found in a particular case in~\cite[Theorem 4.30]{Spa22}, where the problem of finding mutually normalising regular subgroups in the holomorph of a cyclic group of prime order was dealt with. This problem is equivalent to looking for brace blocks; see~\cite[section 7]{CS22a} for more details. 
\end{remark}

We use this characterisation to propose an intermediate construction of brace blocks, which already can provide several examples.

\begin{theorem}\label{theorem: intermediate}
	Let $(A,\cdot)$ be a group, let $M$ be an abelian subgroup of $\Aut(A,\cdot)$, and let $\mathcal{S}$ be the set of group homomorphisms $\gamma\colon A\to M$ such that $\gamma(\psi(a))=\gamma(a)$ for all $a\in A$ and $\psi\in M$. Then
	$((A,\circ_{\gamma})\mid \gamma\in\mathcal{S})$
	is a brace block, where 
	\begin{equation*}
	    a\circ_{\gamma}b=a\cdot \lexp{b}{\gamma(a)}.
	\end{equation*}
	 Moreover, $(A,\circ_{\gamma_1},\circ_{\gamma_2})$ is $\gamma$-homomorphic for all $\gamma_1,\gamma_2\in \mathcal{S}$.
\end{theorem}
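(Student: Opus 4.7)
The plan is to verify the hypotheses of Theorem~\ref{thm:mainblock} and then deduce the $\gamma$-homomorphic property from Lemma~\ref{lemma: two implies three}. The abelianness of $M$, combined with the stability condition $\gamma(\psi(a))=\gamma(a)$ defining $\mathcal{S}$, collapses the compatibility conditions to trivialities.

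First, I would handle the two easier conditions of Theorem~\ref{thm:mainblock}. The constant map $\gamma_0\colon A\to M$, $a\mapsto\id$, clearly lies in $\mathcal{S}$ and gives $\circ_{\gamma_0}=\cdot$, providing the distinguished index $k$. For each $\gamma\in\mathcal{S}$, since $\gamma\colon(A,\cdot)\to M$ is a homomorphism into an abelian group, it is automatically also an antihomomorphism into $M$ (and hence into $\Aut(A,\cdot)$), so the first bullet holds. The core check is the third bullet: for $\gamma_1,\gamma_2\in\mathcal{S}$ and $a,b\in A$, I want to verify
\begin{equation*}
    \gamma_1(a)\gamma_2(b)\gamma_1(a)^{-1}=\gamma_2(\lexp{b}{\gamma_1(a)}).
\end{equation*}
Because $\gamma_1(a),\gamma_2(b)\in M$ and $M$ is abelian, the left-hand side simplifies to $\gamma_2(b)$. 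Since $\gamma_1(a)$ is an element $\psi$ of $M$ and $\gamma_2\in\mathcal{S}$ satisfies $\gamma_2(\psi(b))=\gamma_2(b)$, the right-hand side also equals $\gamma_2(b)$. Theorem~\ref{thm:mainblock} then yields the brace block.

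For the final assertion, I compute the gamma function $\gamma$ of the skew brace $(A,\circ_{\gamma_1},\circ_{\gamma_2})$: as in the proof of Theorem~\ref{thm:main}, the identity $a\circ_{\gamma_2}b=a\cdot\lexp{b}{\gamma_2(a)}=a\circ_{\gamma_1}\lexp{b}{\gamma_1(a)^{-1}\gamma_2(a)}$ gives $\gamma(a)=\gamma_1(a)^{-1}\gamma_2(a)\in M$. Thus $\im(\gamma)\subseteq M$ is abelian. Since we have already shown that $(A,\circ_{\gamma_1},\circ_{\gamma_2})$ is a bi-skew brace, Lemma~\ref{lemma: two implies three} immediately gives that it is $\gamma$-homomorphic.

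I do not expect any substantive obstacle: the abelianness of $M$ makes each of the compatibility conditions degenerate, and the statement is essentially a direct unpacking of the characterisation in Theorem~\ref{thm:mainblock}. The only point worth being careful about is ensuring that $\gamma_0$ indeed lies in $\mathcal{S}$ so that the brace block contains $\cdot$ itself (which is trivial, as the constant map into $\id$ vacuously satisfies the invariance condition), and that the formula for the gamma function of $(A,\circ_{\gamma_1},\circ_{\gamma_2})$ is extracted correctly from the two defining identities.
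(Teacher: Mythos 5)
Your proof is correct and follows essentially the same route as the paper: the first part is obtained by verifying the three conditions of Theorem~\ref{thm:mainblock} (which the paper leaves as ``immediate'' and you usefully spell out, the key point being that abelianness of $M$ together with the invariance condition defining $\mathcal{S}$ trivialises the compatibility condition), and the second part follows from $\im(\gamma_1^{-1}\gamma_2)\subseteq M$ and Lemma~\ref{lemma: two implies three}, exactly as in the paper.
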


\begin{proof}
	The first part is immediate from Theorem~\ref{thm:mainblock}. To conclude the second part, recall that the gamma function $\gamma_{1,2}$ of $(A,\circ_{\gamma_1},\circ_{\gamma_2})$ is given by $\gamma_{1,2}(a)= \gamma_1(a)^{-1}\gamma_2(a)$. As $\gamma_{1,2}(A)\subseteq M$, so in particular it is abelian, we find that $(A,\circ_{\gamma_1},\circ_{\gamma_2})$ is $\gamma$-homomorphic by Lemma~\ref{lemma: two implies three}.
\end{proof}

\begin{example}\label{example: rings}
	Let $R$ be a ring with unity (not necessarily commutative), and for all $x\in R$, define the following map:
\begin{align*}
    \gamma_x\colon R^2&\to \Aut(R^2,+),\quad
    \begin{pmatrix}
r\\
s
\end{pmatrix}\mapsto \begin{pmatrix}
1&0\\ xr & 1
\end{pmatrix}.
\end{align*}
We note the following facts:
\begin{itemize}
	\item For all $x\in R$, \begin{equation*}
		\gamma_x(R^2)\subseteq  M= \left\{\begin{pmatrix}
1&0\\ y & 1
\end{pmatrix}\mid y\in R\right\},
	\end{equation*}
	and $M$ is clearly abelian. 
	\item For all $x\in R$, we have that $\gamma_x$ is a group homomorphism.
	\item For all $x\in R$, $a\in R^2$, and $\psi\in M$, we have $\gamma_x(\psi(a))=\gamma_x(a)$.
\end{itemize}
We conclude that $((R^2,\circ_x)\mid x\in R^2)$ is a brace block, where
\begin{equation*}
	\begin{pmatrix}
r\\
s
\end{pmatrix}
\circ_x
\begin{pmatrix}
r'\\
s'
\end{pmatrix}
=
\begin{pmatrix}
r+r'\\
s+s'+xrr'
\end{pmatrix}.
\end{equation*}
Moreover, all the operations are distinct, because for all $x\in R$,
\begin{equation*}
	\gamma_x\begin{pmatrix}
1\\
0
\end{pmatrix}= \begin{pmatrix}
1&0\\ x & 1
\end{pmatrix}.
\end{equation*}
Assume that $R$ is commutative and that for all $r\in R$ there exists a unique $r'\in R$ such that $2r'=r^2-r$ (by abuse of notation, we say $r'=\frac{r^2-r}{2}$), which is for example the case if $R$ is $\Z$ or an algebra over a field of characteristic different from 2. It is straightforward to verify that we have a group isomorphism
\begin{align*}
	\theta\colon (R^2,+)&\to (R^2,\circ_{\gamma_x}),\quad
	\begin{pmatrix}
r\\
s
\end{pmatrix}\mapsto \begin{pmatrix}
r\\
s+\frac{x(r^2-r)}{2}
\end{pmatrix}.
\end{align*} 

\end{example}
\begin{example}\label{example: Z^2}
    If in the previous example we take $R=\Z$, we find that if $x\neq \pm y$ then $(\Z^2,+,\circ_{\gamma_x})\not\cong (\Z^2,+,\circ_{\gamma_{y}})$. Indeed, $$\begin{pmatrix}
    r\\s
    \end{pmatrix}*\begin{pmatrix}
    r'\\s'
    \end{pmatrix}=\begin{pmatrix}
    0\\xrr'
    \end{pmatrix}$$ and therefore $(\Z^2,+,\circ_{\gamma_x})/(\Z^2,+,\circ_{\gamma_x})^2\cong \Triv(\Z\times C_{|x|})$. This nicely contrasts the case of additive group $\Z$, where only 2 distinct group operations $\circ$ giving a skew brace $(\Z,+,\circ)$ are possible, as we have recalled in section~\ref{sec: classification}.
    
    One can show that the skew braces $(\Z^2,+,\circ_{\gamma_x})$ are isomorphic to the $\lambda$-cyclic skew braces with infinite cyclic image constructed in~\cite[Section 4]{BNY22}. 
\end{example}
\begin{example}
Let us reconsider Example~\ref{example: rings} with $R$ a field of characteristic not 2. As explained above, we obtain a brace block $((R^2,\circ_{\gamma_x})\mid x\in R)$, where $(R^2,+)\cong (R^2,\circ_{\gamma_x})$ for all $x\in R$. We claim that in this case all bi-skew brace of the form $(R^2,\circ_x,\circ_y)$, where $x,y\in R$ and $x\neq y$, are isomorphic. For this, it suffices to check that an isomorphism of skew braces is given by
\begin{align*}
	\theta\colon (R^2,+,\circ_{\gamma_1})&\to (R^2,\circ_{\gamma_x},\circ_{\gamma_y}),\quad
	\begin{pmatrix}
r\\
s
\end{pmatrix}\mapsto
	\begin{pmatrix}
r\\
(y-x)s+\frac{x(r^2-r)}{2}
\end{pmatrix}.
\end{align*}
\end{example}

Let now $G$ and $H$ be groups. We have seen in section~\ref{sec: structural result} that semidirect products of the form $\Triv(G)\ltimes \Triv(H)$, respectively $\opTriv(G)\ltimes \Triv(H)$, are an easy way to construct $\gamma$-homomorphic skew braces, respectively bi-skew braces. It is therefore natural to try to generalise this construction in order to obtain brace blocks. 

For a group homomorphism $\alpha\colon G\to \Aut(H)$, we write $\circ_{\alpha}$ for the group operation on $G\times H$ given by the semidirect product of $G$ and $H$.

\begin{proposition}\label{proposition: brace block semidirect}
Let $G$ and $H$ be groups, let $M$ be an abelian subgroup of $\Aut(H)$, and let $\mathcal{S}$ be the set of group homomorphisms $\alpha\colon G\to M$. Then $((G\times H,\circ_{\alpha})\mid \alpha \in \mathcal{S})$ is a brace block.
\end{proposition}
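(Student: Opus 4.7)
The plan is to realize this statement as an immediate instance of Theorem~\ref{theorem: intermediate}. First I would equip $A = G \times H$ with the direct product group operation $\cdot$, which coincides with $\circ_{\alpha_0}$ for the trivial homomorphism $\alpha_0 \in \mathcal{S}$ sending every element of $G$ to $\id_H \in M$.

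Next I would introduce the subgroup
\begin{equation*}
M' = \{\psi_\phi \mid \phi \in M\} \subseteq \Aut(A, \cdot), \qquad \psi_\phi(g,h) = (g, \lexpp{h}{\phi}),
\end{equation*}
which is abelian since $\phi \mapsto \psi_\phi$ is an injective group homomorphism $M \to \Aut(A,\cdot)$. For each $\alpha \in \mathcal{S}$, define $\gamma_\alpha \colon A \to M'$ by $\gamma_\alpha(g,h) = \psi_{\alpha(g)}$. Because $\alpha$ is a homomorphism and $\gamma_\alpha$ depends only on the first coordinate, $\gamma_\alpha$ is a group homomorphism from $(A,\cdot)$ to $M'$. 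Moreover, every $\psi \in M'$ preserves the first coordinate, so $\gamma_\alpha(\psi(a)) = \gamma_\alpha(a)$ for all $a \in A$ and $\psi \in M'$.

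Hence each $\gamma_\alpha$ lies in the index set of Theorem~\ref{theorem: intermediate} applied to $(A,\cdot)$ with abelian subgroup $M'$. That theorem yields a brace block indexed by all such $\gamma$, of which $((A, \circ_{\gamma_\alpha}) \mid \alpha \in \mathcal{S})$ is a subfamily, hence itself a brace block. A direct computation
\begin{equation*}
(g,h) \cdot \lexpp{(g',h')}{\gamma_\alpha(g,h)} = (gg',\, h \cdot \lexpp{h'}{\alpha(g)}) = (g,h) \circ_\alpha (g',h')
\end{equation*}
confirms $\circ_{\gamma_\alpha} = \circ_\alpha$, completing the argument. There is no genuine obstacle here; the only care required is to write down $\gamma_\alpha$ correctly and verify the two conditions demanded by Theorem~\ref{theorem: intermediate}, both of which reduce to the observation that $\gamma_\alpha$ sees only the $G$-coordinate of its input.
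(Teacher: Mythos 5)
Your proposal is correct and follows essentially the same route as the paper: identify the abelian subgroup $\{\id\}\times M$ of $\Aut(G\times H,\cdot)$ (your $M'$), observe that each $\gamma_\alpha(g,h)=(\id,\alpha_g)$ is a homomorphism invariant under that subgroup, and invoke Theorem~\ref{theorem: intermediate}. Your extra verification that $\circ_{\gamma_\alpha}=\circ_\alpha$ is a welcome explicit check of something the paper leaves implicit, but it does not change the argument.
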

\begin{proof}
For all $\alpha\in \mathcal{S}$, let $\gamma_\alpha$ be the gamma function associated with $(G\times H,\cdot,\circ_{\alpha})$: 
\begin{align*}
	\gamma_\alpha\colon G\times H\to \Aut(G)\times\Aut(H)\subseteq \Aut(G\times H),\quad (g,h)\mapsto (\id, \alpha_g).
\end{align*}
These images are all contained in the abelian subgroup $\{\id\}\times M$ of $\Aut(G\times H)$. In order to apply Theorem~\ref{theorem: intermediate}, it suffices to check that $\gamma_\alpha(\psi(g,h))=\gamma_\alpha(g,h)$ for all $\alpha\in \mathcal{S}$, $\psi=(\id,\psi')\in \{\id\}\times M$, and $(g,h)\in G\times H$. We find
\begin{align*}
    \gamma_\alpha(\psi(g,h))&=\gamma_\alpha(g,\psi'(h))=(\id,\alpha_g)=\gamma_\alpha(g,h).\qedhere
\end{align*}
\end{proof}
\begin{example}
    For all $n\in \N$, define 
    \begin{align*}
    	\alpha_n\colon \Z\to \GL_2(\Z),\quad 
    x\mapsto\begin{pmatrix}
    1&0\\
    nx&1
    \end{pmatrix},
    \end{align*}
    which is a well-defined group homomorphism. As $$B=\left\{\begin{pmatrix}
    1&0\\
    x&1
    \end{pmatrix}\mid x\in \Z\right\}$$ is clearly an abelian subgroup of $\GL_2(\Z)$, from Proposition~\ref{proposition: brace block semidirect} we obtain a brace block $((\Z\times \Z^2,\circ_n)\mid n\in \N)$. More precisely,
    $$\begin{pmatrix}
    x\\y\\z
    \end{pmatrix}\circ_n\begin{pmatrix}
    x'\\y'\\z'
    \end{pmatrix}=\begin{pmatrix}
    x+x'\\y+y'\\z+z'+nxy'
    \end{pmatrix}$$
    In particular, if $n\neq m$, then $(\Z\times \Z^2,\circ_{\alpha_n})$ and $(\Z\times \Z^2,\circ_{\alpha_m})$ are nonisomorphic, as
    the abelianisation of $(\Z\times \Z^2,\circ_{\alpha_n})$ is isomorphic to $ \Z^2\times C_{n}$. We have thus obtained a brace block with countably many nonisomorphic groups. 
\end{example}

\begin{example}
Let $(A,\cdot)$ be a group, and let $B$ be a subgroup of $A$. Assume that the group of inner automorphisms of $(A,\cdot)$ induced by $B$, which we denote by $H(B)$, is abelian. It is easy to check that this is equivalent to $[B,B]\subseteq Z(A)$. 

A straightforward verification shows that $\gamma(\psi(a))=\gamma(a)$ for all group homomorphisms $\gamma\colon A\to H(B)$, $\psi\in H(B)$, and $a\in A$. This means that if we denote by $\mathcal{S}$ the set of group homomorphisms from $A$ to $H(B)$, we obtain a brace block $((A,\circ_{\gamma})\mid \gamma\in \mathcal{S})$
with 
\begin{equation*}
    a\circ_{\gamma} b=a\cdot\lexp{b}{\gamma(a)}.
\end{equation*}
Note that the groups homomorphisms $A\to H(B)$ correspond precisely to the group homomorphisms $A\to B/(B\cap Z(A))$, because $H(B)\cong B/(B\cap Z(A))$. For example, every group homomorphism $\psi\colon A\to B$ yields a group homomorphism $A\to B/(B\cap Z(A))$, which we can use for our construction. Moreover, we have $\psi[A,A]\subseteq [B,B]\subseteq Z(A)$, so we find precisely the condition described in~\cite[Theorem 1.2]{CS21}. In particular, when $B$ is abelian, we recover~\cite{Koc21, Koc22}. Indeed, all the bi-skew braces found in these works are associated with gamma functions which act by conjugation and have a common abelian codomain; see also Remark~\ref{remark: comparison iterative}. For a concrete application of this construction in Hopf--Galois theory, see \cite[Theorem 4.9]{ST22b-p}.
\end{example}
\begin{example}
    We show now how the main construction of~\cite{CS22a} follows from Theorem~\ref{theorem: intermediate}. 
    Let $(A,\cdot)$ be a group, let $B$ be a subgroup of $(A,\cdot)$ such that $[B,B]$ is contained in $Z(A)$ (so that $H(B)$, defined as before, is abelian), and let $K$ be a subgroup of $B$ contained in $Z(A)$. (Note that we do not require that $B/K$ is abelian.) Define
    \begin{gather*}
         \mathcal{A}=\{\psi\colon A/K\to B/K \text{ group homomorphism}\},\\
          \mathcal{B}=\{\alpha\colon A\times A\to K\mid \text{$\alpha$ is bilinear and $\alpha(A,K)=\alpha(K,A)=\{1\}$}\}.
    \end{gather*}
    For all $\psi\in \mathcal{A}$ and $\alpha\in \mathcal{B}$, define
    \begin{equation*}
        a\circ_{\psi,\alpha} b=a\cdot \psi(a)\cdot b\cdot \psi(a)^{-1}\cdot \alpha(a,b),
    \end{equation*}
    where with a little abuse of notation we write $\psi(a)$ for any element in the coset $\psi(aK)$. In~\cite{CS22a} it is shown that this construction is well-defined, and that moreover $((A,\circ_{\psi,\alpha})\mid (\psi,\alpha)\in \mathcal{A}\times \mathcal{B})$ is a brace block. 
    We can write 
    \begin{equation*}
    	a\circ_{\psi,\alpha} b=a\cdot \lexp{b}{\gamma_1(a)\gamma_2(a)},
    \end{equation*}
    where $\gamma_1(a)$ denotes conjugation by $\psi(a)$ and $\gamma_2(a)\colon b\mapsto b\cdot \alpha(a,b)$. Consider now the following subgroup of the central automorphisms of $(A,\cdot)$:
    \begin{equation*}
        L=\{\delta\in\Aut(A,\cdot)\mid \delta(b)\cdot b^{-1}\in K \text{ and } \delta(k)=k \text{ for all $b\in A$ and $k\in K$}\}.
    \end{equation*}
    The group $L$ is abelian and it centralises the subgroup of inner automorphisms of $\Aut(A,\cdot)$, so that $M=H(B)L$ is abelian. Now define $\mathcal{S}$ as in Theorem~\ref{theorem: intermediate}, with respect to $M$. 
    It is just a matter of computation to show that the group homomorphism $a\mapsto \gamma_1(a)\gamma_2(a)$ is an element of $\mathcal{S}$, so we apply Theorem~\ref{theorem: intermediate} to derive our claim. 
    
    Note that in fact there is no need for the codomain of the maps in $\mathcal{A}$ to be $B/K$. We could just consider group homomorphisms from $A/K$ to $B/(B\cap Z(A))$ and find that the construction still works. This means that we do not require any relation between $K$ and $B$; in this way we find a further generalisation of the original construction.
\end{example}

We now use Theorem~\ref{theorem: intermediate} to obtain an iterative construction of brace blocks.

\begin{corollary}\label{cor:iterate}
	Let $(A,\cdot,\circ)$ be a $\gamma$-homomorphic bi-skew brace, and for all $n\in \Z$ and $a\in A$, let $\gamma_n(a)=\gamma(a^n)=\gamma(a)^n$. Then $((A,\circ_{n})\mid n\in \Z)$
	is a brace block, where
	\begin{equation*}
		a\circ_{n}b=a\cdot \lexp{b}{\gamma_n(a)}.
	\end{equation*} 
\end{corollary}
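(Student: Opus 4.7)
The plan is to apply Theorem~\ref{theorem: intermediate} directly with $M = \im(\gamma) \subseteq \Aut(A,\cdot)$ and with the family of maps $\{\gamma_n \mid n \in \Z\}$ in the role of $\mathcal{S}$.

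First I would observe that $M$ is abelian: this is precisely Lemma~\ref{lemma: two implies three} applied to $(A,\cdot,\circ)$, which is simultaneously $\gamma$-homomorphic and a bi-skew brace. Next I would check that for every $n \in \Z$ the map $\gamma_n \colon (A,\cdot) \to M$ is a well-defined group homomorphism. Since $\gamma$ is a homomorphism by the $\gamma$-homomorphic assumption and $M$ is abelian, one has
\begin{equation*}
    \gamma_n(a \cdot b) = \gamma(a \cdot b)^n = (\gamma(a)\gamma(b))^n = \gamma(a)^n \gamma(b)^n = \gamma_n(a)\gamma_n(b).
\end{equation*}

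The slightly less automatic step is the invariance condition $\gamma_n(\psi(a)) = \gamma_n(a)$ for $\psi \in M$. Writing $\psi = \gamma(c)$ and unpacking $\lexp{a}{\gamma(c)} = c^{-1} \cdot (c \circ a)$, I would compute
\begin{equation*}
    \gamma(\lexp{a}{\gamma(c)}) = \gamma(c \circ a)\, \gamma(c^{-1}) = \gamma(c)\gamma(a)\gamma(c)^{-1},
\end{equation*}
where the first equality uses Theorem~\ref{theorem: characterizations bi caranti} (that $\gamma$ is an antihomomorphism with respect to $\cdot$, since $A$ is bi-skew) and the second uses that $\gamma$ is a homomorphism on $(A,\circ)$ together with $\gamma(c^{-1}) = \gamma(c)^{-1}$. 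Because $M$ is abelian, the right-hand side equals $\gamma(a)$, whence $\gamma_n(\psi(a)) = \gamma(\psi(a))^n = \gamma(a)^n = \gamma_n(a)$.

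Once these three points are in place, Theorem~\ref{theorem: intermediate} immediately yields that $((A,\circ_{\gamma_n}) \mid n \in \Z)$ is a brace block, which is precisely the statement. I do not anticipate a real obstacle: the only delicate point is the computation of $\gamma(\lexp{a}{\gamma(c)})$, and this collapses as soon as one simultaneously uses the bi-skew and $\gamma$-homomorphic conditions together with the abelianness of $\im(\gamma)$.
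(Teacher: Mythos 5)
Your proposal is correct and follows exactly the paper's route: the paper's own proof is the one-line "apply Theorem~\ref{theorem: intermediate} with $M=\gamma(A)$, which is abelian by Lemma~\ref{lemma: two implies three}", and you simply supply the routine verifications of the theorem's hypotheses (that each $\gamma_n$ is a homomorphism into $M$ and that $\gamma_n(\psi(a))=\gamma_n(a)$ for $\psi\in M$), all of which check out. Note only that the invariance computation collapses even faster if you use the $\gamma$-homomorphic property directly, $\gamma(c^{-1}\cdot(c\circ a))=\gamma(c)^{-1}\gamma(c)\gamma(a)=\gamma(a)$, without invoking the antihomomorphism and abelianness.
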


\begin{proof}
 Apply Theorem~\ref{theorem: intermediate} with $M=\gamma(A)$, which is abelian by Lemma~\ref{lemma: two implies three}.
\end{proof}

\begin{remark}\label{remark: comparison iterative}
	This construction presents some similarities with Koch's construction~\cite{Koc22} (or more precisely, the variation presented in~\cite[Example 5.2]{CS22a}), but the operations we find are different. Indeed, let $(A,\cdot)$ be a group, and let $\psi$ be an abelian endomorphism. Then $(A,\cdot,\circ)$ is a $\gamma$-homomorphic bi-skew brace, where $\gamma(a)$ is conjugation by $\psi(a)$. Both constructions yield a brace block $((A,\circ_n)\mid n\in \Z)$, where
	\begin{equation*}
		a\circ_n b =a\cdot \psi_n(a)\cdot b\cdot\psi_n(a)^{-1}.
	\end{equation*}
	In Koch's case, $\psi_n(a)=\prod_{i=1}^n \psi^i\left(a^{\binom{n}{i}}\right)$, while in our case, $\psi_n(a)=\psi(a^n)$. 
\end{remark}

\begin{remark}
    Corollary~\ref{cor:iterate}, which is a natural application of Theorem~\ref{theorem: intermediate}, also recently appeared in~\cite[Theorem 4.12]{BNY22-p}, where the approach followed is significantly different.
\end{remark}
\begin{example}
Let
\begin{align*}
	\gamma\colon \Z^2&\to \GL_2(\Z),\quad 
\begin{pmatrix}
a\\ b
\end{pmatrix}\mapsto \begin{pmatrix}
1&0\\ a & 1
\end{pmatrix}.
\end{align*}
This clearly is a gamma function which provides a $\gamma$-homomorphic bi-skew brace, and applying Corollary~\ref{cor:iterate} to $\gamma$, we find precisely the $\gamma_n$, $n\in \N$, as in Example~\ref{example: Z^2}. In particular, our iterative construction yields a brace block containing countably many nonisomorphic skew braces. 
\end{example}
It is natural to ask whether a similar construction as Corollary~\ref{cor:iterate} is still possible when we are not necessarily starting from a $\gamma$-homomorphic bi-skew brace. The following proposition shows that this is indeed the case.
\begin{proposition}\label{prop: deformation by endomorphism}
Let $A$ be a skew brace, and let $\psi$ a group endomorphism of $(A,\cdot)$ such that for all $a,b\in A$, the equation
\begin{equation*}
    \psi(\lexp{b}{\gamma(\psi(a))})=\lexpp{\psi(b)}{\gamma(\psi(a))}
\end{equation*} holds. Then $(A,\cdot, \circ_\psi)$ is skew brace, where
\begin{equation*}
    a\circ_\psi b=a\cdot \lexp{b}{\gamma(\psi(a))}.
\end{equation*}
\end{proposition}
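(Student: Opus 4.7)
The plan is to recognise that we simply need to verify that the map $\gamma' \colon A \to \Aut(A,\cdot)$ defined by $\gamma'(a) = \gamma(\psi(a))$ is a gamma function on $(A,\cdot)$. Once this is done, the associated operation is exactly $a \circ' b = a \cdot \lexp{b}{\gamma'(a)} = a \cdot \lexp{b}{\gamma(\psi(a))} = a \circ_\psi b$, and by the discussion after \eqref{eq: gamma function} this automatically gives a skew brace structure $(A,\cdot,\circ_\psi)$.

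First I would note that $\gamma'(a) \in \Aut(A,\cdot)$ is immediate since it is the composition of $\psi$ with the group homomorphism $\gamma\colon (A,\circ) \to \Aut(A,\cdot)$ (composed with the map $A \to (A,\circ)$ which is the identity on underlying sets); the crucial point is really that each $\gamma(\psi(a))$ is an automorphism of $(A,\cdot)$, which it is by definition.

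The main computation is then to verify the gamma identity
\begin{equation*}
    \gamma'\bigl(a \cdot \lexp{b}{\gamma'(a)}\bigr) = \gamma'(a) \, \gamma'(b)
\end{equation*}
for all $a,b \in A$. Unpacking the left-hand side, and using that $\psi$ is a homomorphism of $(A,\cdot)$, we obtain
\begin{equation*}
    \gamma'\bigl(a \cdot \lexp{b}{\gamma(\psi(a))}\bigr)
    = \gamma\bigl(\psi(a) \cdot \psi(\lexp{b}{\gamma(\psi(a))})\bigr).
\end{equation*}
Now the hypothesis $\psi(\lexp{b}{\gamma(\psi(a))}) = \lexpp{\psi(b)}{\gamma(\psi(a))}$ rewrites this as
\begin{equation*}
    \gamma\bigl(\psi(a) \cdot \lexpp{\psi(b)}{\gamma(\psi(a))}\bigr),
\end{equation*}
and applying the gamma identity for the original skew brace $(A,\cdot,\circ)$ to the pair $\psi(a), \psi(b)$ yields $\gamma(\psi(a))\,\gamma(\psi(b)) = \gamma'(a)\gamma'(b)$, as required.

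I do not expect any genuine obstacle here; the entire content is that the hypothesis on $\psi$ is precisely what is needed to push the power of $\gamma$ through the action in the argument of $\gamma$, so that the original gamma identity can be invoked at the image. The one subtlety worth emphasising is that although $\psi$ is only assumed to be a homomorphism of the additive group, the compatibility condition ensures $\gamma'$ behaves well; we do not need $\psi$ to be a skew brace endomorphism.
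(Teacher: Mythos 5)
Your argument is correct and is essentially identical to the paper's proof: both define the candidate gamma function $a\mapsto\gamma(\psi(a))$, use additivity of $\psi$ to pull it inside, invoke the hypothesis to replace $\psi(\lexp{b}{\gamma(\psi(a))})$ by $\lexpp{\psi(b)}{\gamma(\psi(a))}$, and then recognise $\psi(a)\circ\psi(b)$ so that the gamma identity of the original skew brace applies.
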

\begin{proof}
It suffices to prove that the map $A\mapsto \Aut(A,\cdot)$ given by $a\mapsto \gamma(\psi(a))$ is a gamma function on $(A,\cdot)$. For all $a,b\in A$, we find that
\begin{equation*}
    \gamma(\psi(a\cdot \lexp{b}{\gamma(\psi(a))}))=\gamma(\psi(a)\cdot \lexp{\psi(b)}{\gamma(\psi(a))})=\gamma(\psi(a)\circ \psi(b))=\gamma(\psi(a))\gamma(\psi(b)).\qedhere
\end{equation*}
\end{proof}
The relation with Corollary~\ref{cor:iterate} is especially clear when we look at the following straightforward corollary, which in particular applies to Jacobson radical rings.
\begin{corollary}
    Let $(A,\cdot,\circ)$ be a (two-sided) brace. Then $(A,\cdot,\circ_n)$ is a (two-sided) brace for all $n\in \Z$, where
    \begin{equation*}
        a\circ_n b=a\cdot \lexp{b}{\gamma(a^n)}.
    \end{equation*}
\end{corollary}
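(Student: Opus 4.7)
The plan is to directly apply the preceding proposition with the endomorphism $\psi\colon (A,\cdot)\to (A,\cdot)$ defined by $\psi(a)=a^n$. Two things need checking: first, that $\psi$ is actually a group homomorphism of $(A,\cdot)$, and second, that $\psi$ satisfies the hypothesis
\begin{equation*}
    \psi(\lexp{b}{\gamma(\psi(a))})=\lexpp{\psi(b)}{\gamma(\psi(a))}
\end{equation*}
for all $a,b\in A$.

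The first point is immediate: since $A$ is a brace, the additive group $(A,\cdot)$ is abelian, so the $n$-th power map is a group homomorphism for every $n\in\Z$. For the second point, substituting $\psi(a)=a^n$ and $\psi(b)=b^n$ reduces the required identity to
\begin{equation*}
    (\lexp{b}{\gamma(a^n)})^n=\lexp{b^n}{\gamma(a^n)}.
\end{equation*}
But $\gamma(a^n)\in\Aut(A,\cdot)$, so in particular it is a group homomorphism of $(A,\cdot)$, which forces it to commute with taking $n$-th powers. This gives the identity.

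Once both hypotheses are verified, the preceding proposition yields that $(A,\cdot,\circ_n)$ is a skew brace with
\begin{equation*}
    a\circ_n b=a\cdot\lexp{b}{\gamma(a^n)}.
\end{equation*}
Since the additive group is $(A,\cdot)$, which is abelian by assumption, $(A,\cdot,\circ_n)$ is in fact a brace, as claimed. There is no real obstacle here: the only subtle point is recognising that commutativity of $(A,\cdot)$ is exactly what makes both the hypothesis on $\psi$ and the compatibility between $\gamma(a^n)$ and $n$-th powers hold for free.
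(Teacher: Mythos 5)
Your proof is correct and is exactly the argument the paper intends: apply the preceding proposition with $\psi(a)=a^n$, which is a homomorphism because $(A,\cdot)$ is abelian, and whose compatibility condition reduces to the fact that the automorphism $\gamma(a^n)$ commutes with $n$-th powers. (One tiny over-attribution: that last compatibility holds for any group homomorphism regardless of commutativity; abelianness is only needed to make the power map $\psi$ a homomorphism.)
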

\begin{proof}
    The fact that $(A,\cdot,\circ_n)$ is a brace follows by applying Proposition~\ref{prop: deformation by endomorphism}, with $\psi\colon a\to a^n$. A straightforward calculation shows that if $(A,\cdot,\circ)$ is two-sided, then also $(A,\cdot,\circ_n)$ is two-sided.
\end{proof}

\section*{Acknowledgment}
The authors would like to thank the anonymous referee for the careful reading and invaluable suggestions, which contributed to improve the exposition and some arguments for proofs.

\bibliographystyle{amsalpha}
\bibliography{bib.bib}
\end{document}